\newif\ifpersonal
\newcommand{\personal}[1]{\textcolor[rgb]{0,0,1}{(Personal: #1)}}
\newcommand{\todo}[1]{\textcolor{red}{(Todo: #1)}}
\newcommand*{\personal}[1]{\ignorespaces}
\newcommand*{\todo}[1]{\ignorespaces}
\definecolor{MyDarkBlue}{rgb}{0.15,0.25,0.45}
\newcommand{\Omit}[1]{}
\newcommand{\calB}{{\mathcal B}}
\newcommand{\calD}{{\mathcal D}}
\newcommand{\calH}{{\mathcal H}}
\newcommand{\calI}{{\mathcal I}}
\newcommand{\calJ}{{\mathcal J}}
\newcommand{\calQ}{{\mathcal Q}}
\newcommand{\K}{{\mathbb{K}}}
\newcommand{\R}{{\mathbb{R}}}
\newcommand{\N}{{\mathbb{N}}}
\newcommand{\C}{{\mathbb{C}}}
\newcommand{\Z}{{\mathbb{Z}}}
\newcommand{\Q}{{\mathbb{Q}}}
\newcommand{\scrR}{{\mathscr R}}
\newcommand{\sfA}{{\mathsf A}}
\newcommand{\sfQ}{{\mathsf Q}}
\newcommand{\sfR}{{\mathsf R}}
\newcommand{\sfF}{{\mathsf{F}}}
\newcommand {\bfI}{\mathbf I}
\newcommand{\bfU}{\mathbf U}
\newcommand{\bfV}{\mathbf{V}}
\renewcommand {\a}{\mathfrak a}
\renewcommand {\b}{\mathfrak b}
\renewcommand {\c}{\mathfrak c}
\renewcommand {\d}{\mathfrak d}
\newcommand {\g}{\mathfrak{g}}
\newcommand {\h}{\mathfrak h}
\newcommand {\n}{\mathfrak n}
\renewcommand {\r}{\mathfrak r}
\renewcommand{\sl}{\mathfrak{sl}}
\newcommand{\heis}{\mathfrak{heis}}
\newcommand{\bsfldh}{\mathbf{K}}
\newcommand{\vectK}{\operatorname{Vect}_{\bsfldh}}
\newcommand{\FL}[1]{\mathcal{L}_{#1}}
\newcommand{\xzpm}[1]{\xi_{#1}^{\pm}}
\newcommand{\xzmp}[1]{\xi_{#1}^{\mp}}
\newcommand{\xzp}[1]{\xi_{#1}^{+}}
\newcommand{\xzm}[1]{\xi_{#1}^{-}}
\newcommand{\hcorpm}[1]{\hcor{#1}^{\pm}}
\newcommand{\id}{{\mathsf{id}}}
\newcommand{\colim}{\mathsf{colim}}
\newcommand{\rank}{\mathsf{rk}}
\newcommand{\End}{\mathsf{End}}
\newcommand {\ol}{\overline}
\newcommand {\wt}{\widetilde}
\newcommand {\wh}{\widehat}
\newcommand{\scsop}{\scriptscriptstyle\operatorname} 
\newcommand {\ie}{{\emph{i.e.}}, }
\newcommand{\serre}[1]{\mathsf{Serre}(#1)}
\newcommand{\drc}[1]{\delta_{#1}}
\newcommand{\sfad}{\mathsf{ad}}
\newcommand{\ten}{\otimes}
\newcommand{\bsfld}{{\mathbf k}} %base field \mathbb K
\newcommand{\gcm}{\sfA} %Cartan matrix
\newcommand{\sgpp}{\oplus} %addition in semigroup
\newcommand{\sgpm}{\ominus} %subtraction in semigroup
\newcommand{\cf}[1]{{\mathbb 1}_{#1}} %characteristic functions
\newcommand{\fun}[1]{\mathfrak{f}_{#1}} %algebra of admissible functions
\newcommand{\intsf}{\mathsf{Int}} %set of intervals
\newcommand{\abf}[2]{\langle #1, #2\rangle} %angle bracket bilinear form
\newcommand{\rbf}[2]{\left( #1 | #2 \right)} %round bracket bilinear form
\newcommand{\abfcf}[2]{\langle \cf{#1}, \cf{#2}\rangle} %angle bracket bilinear form
\newcommand{\rbfcf}[2]{\left( \cf{#1} | \cf{#2}\right)} %round bracket bilinear form
\newcommand{\intnext}{\to} %\Rightarrow \llcorner
\newcommand{\rsub}{\vdash} %open subinterval %\vartriangleleft
\newcommand{\lsub}{\dashv} %closed subinterval %\blacktriangleleft
\newcommand{\intcap}{\pitchfork} %open-closed intersection %\to
\newcommand{\rtl}{\sfQ} %root lattice
\newcommand{\rts}{\sfR} %root system
\newcommand{\hcor}[1]{h_{#1}} %coroot
\newcommand{\cow}[1]{\lambda^{\vee}_{#1}} %fundamental coweight
\newcommand{\rls}[1]{\scrR} %KM realization
\newcommand{\mrls}[1]{\scrR_{\scsop{min}}} %minimal realization
\newcommand{\crls}[1]{\ol{\scrR}} %canonical realization
\newcommand{\sfD}{\mathsf{D}}
\newcommand{\bfJ}{\mathbf{J} }
\newcommand{\ul}{\underline}
\newcommand{\xg}[2]{x^{#1}_{#2}}
\newcommand{\xz}[1]{\xi_{#1}}
\newcommand{\xp}[1]{\xg{+}{#1}}
\newcommand{\xm}[1]{\xg{-}{#1}}
\newcommand{\xpm}[1]{\xg{\pm}{#1}}
\newcommand{\xmp}[1]{\xg{\mp}{#1}}
\newcommand{\iip}[2]{\rbf{#1}{#2}}
\newcommand{\gb}{\g_{\b}}
\newcommand{\grb}{\g^{\scsop{res}}_{\b}}
\newcommand{\gtwo}{\g^{(2)}}
\newcommand{\btwo}{\b^{(2)}}
\newcommand{\zh}{\h^{\scsop{z}}}
\newcommand{\cond}{\vartriangleright}
\newcommand{\donc}{\vartriangleleft}
\newcommand{\dcs}{\triangleright\negthinspace\negthinspace\triangleleft}
\newcommand{\dtwo}{\d^{(2)}}
\renewcommand{\DJ}[1]{\bfU_q#1}
\newcommand{\qxg}[2]{X^{#1}_{#2}}
\newcommand{\qxz}[2]{K_{#1}^{#2}}
\newcommand{\qxp}[1]{\qxg{+}{#1}}
\newcommand{\qxm}[1]{\qxg{-}{#1}}
\newcommand{\qxpm}[1]{\qxg{\pm}{#1}}
\newcommand{\D}{\calD}
\newcommand{\iM}[2]{{#1}\triangledown{#2}}
\newcommand{\im}[2]{{#1}\negthinspace\vartriangle\negthinspace{#2}}
\newcommand{\hext}[1]{{#1}[\negthinspace[\hbar]\negthinspace]}
\newcommand{\ia}{\alpha}
\newcommand{\ib}{\beta}
\newcommand{\ic}{\gamma}
\newcommand{\ca}[2]{\mathsf{a}_{#1#2}}
\newcommand{\cb}[2]{\mathsf{b}_{#1#2}}
\newcommand{\qcb}[3]{\mathsf{b}^{#3}_{#1#2}}
\newcommand{\qcc}[3]{\mathsf{c}^{#3}_{#1#2}}
\newcommand{\qcr}[3]{\mathsf{r}^{#3}_{#1#2}}
\newcommand{\qcs}[3]{\mathsf{s}^{#3}_{#1#2}}
\newcommand{\half}{\frac{1}{2}}
\newcommand{\triend}{\parbox{2mm}{\hfill} \hfill\mbox{\hspace{0.2mm}}\hfill$\triangle$}
\newcommand{\ocend}{\parbox{2mm}{\hfill} \hfill\mbox{\hspace{0.2mm}}\hfill$\oslash$}
\newtheorem{theorem}{Theorem}
\newtheorem{proposition}[theorem]{Proposition}
\newtheorem{corollary}[theorem]{Corollary}
\newtheorem{def-thm}{Definition--Theorem}
\newtheorem{corollary*}{Corollary}
\newtheorem*{theorem*}{Theorem}
\newtheorem*{definition*}{Definition}
\newtheorem*{def-thm*}{Definition--Theorem}
\newtheorem*{proposition*}{Proposition}
\newtheorem*{conjecture*}{Conjecture}
\numberwithin{equation}{section}
\numberwithin{theorem}{section}
\theoremstyle{remark}
\newtheorem{ex}[theorem]{Example}
\newenvironment{example}{\begin{ex}}{\triend\end{ex}}
\theoremstyle{remark}
\newtheorem{rem}[theorem]{Remark}
\newenvironment{remark}{\begin{rem}}{\triend\end{rem}}
\theoremstyle{definition}
\newtheorem{defin}[theorem]{Definition}
\newtheorem*{defin*}{Definition}
\newenvironment{definition}{\begin{defin}}{\ocend\end{defin}}
\newenvironment{nndefinition}{\begin{defin*}}{\ocend\end{defin*}}
\title[Quantization of continuum Kac--Moody algebras]{Quantization of continuum Kac--Moody algebras}
\author[A.~Appel]{Andrea Appel}
\address[Andrea Appel]{Dipartimento di Scienze Matematiche, Fisiche e Informatiche,
Universit\`a di Parma, Italy}
\curraddr{}
\email{\href{mailto:andrea.appel@unipr.it}{andrea.appel@unipr.it}}
\author[F.~Sala]{Francesco Sala}
\address[Francesco Sala]{Dipartimento di Matematica, Universit\`a di Pisa, Italy}
\curraddr{}
\email{\href{mailto:francesco.sala@unipi.it}{francesco.sala@unipi.it}}
\thanks{The work of the first--named author is supported by the ERC Grant 637618.
	The work of the second-named author is partially supported by World Premier International 
	Research Center Initiative (WPI), MEXT, Japan, by JSPS KAKENHI Grant number JP17H06598 and 
	by JSPS KAKENHI Grant number JP18K13402.}
\subjclass[2010]{Primary: 17B65; Secondary: 17B67, 81R50}
\keywords{Continuum Kac--Moody algebras; continuum quivers; continuum quantum groups; quantization of Lie bialgebras}
\begin{document}

\begin{abstract}
	Continuum Kac--Moody algebras have been recently introduced by 
	the authors and O. Schiffmann in \cite{appel-sala-schiffmann-18}. 
	These are Lie algebras governed by a {\em continuum} root system, which
	can be realized as uncountable colimits of Borcherds--Kac--Moody 
	algebras. In this paper, we prove that any continuum Kac--Moody algebra 
	$\g$ is canonically endowed with a non--degenerate invariant bilinear form. 
	The positive and negative Borel subalgebras form a Manin triple with respect to 
	this pairing, which allows to define on $\g$ a topological quasi--triangular Lie 
	bialgebra structure. We then construct an explicit quantization of $\g$, which we 
	refer to as a \emph{continuum quantum group}, and we show that the latter is 
	similarly realized as an uncountable colimit of Drinfeld--Jimbo quantum groups.
\end{abstract}

\maketitle\thispagestyle{empty}

\begin{center}
	\emph{Dedicated to Prof. Kyoji Saito on the occasion of his 75th birthday.}
\end{center}

\vskip 1cm

\tableofcontents
\addtocontents{toc}{\protect\setcounter{tocdepth}{1}} %subsection in the introduction do not appear in TOC

%----------------------------------------------------------------------------------------------
\section{Introduction}
%----------------------------------------------------------------------------------------------
Continuum Kac--Moody algebras have been recently introduced by the authors and O. Schiffmann
in \cite{appel-sala-schiffmann-18}. Their definition is similar to that of a Kac--Moody algebra.
However, they are governed by a \emph{continuum} root system, arising from the combinatorics of 
connected intervals in a one--dimensional topological space. They are not Kac--Moody 
algebras themselves, but they can be realized as uncountable colimits of symmetric 
Borcherds--Kac--Moody algebras\footnote{Specifically, we allow the diagonal entries of the Cartan
	matrix to be zero.}.

In this paper, we provide a gentle introduction to this new theory,
avoiding the technicalities of \cite{appel-sala-schiffmann-18}, and we push 
further the study of these Lie algebras, providing two main contributions. 
First, we prove that continuum Kac--Moody algebras have a canonical structure
of (topological) Lie bialgebras, which arises, as in the classical Kac--Moody case, 
from the construction of a non--degenerate invariant symmetric bilinear form.
Then, we construct an explicit algebraic quantization of these topological structures,
which we call \emph{continuum quantum group}: they can be similarly realized as
uncountable colimits of Drinfeld--Jimbo quantum groups. Moreover, we prove that, 
in the simplest cases of the line and the circle, they coincide with the 
quantum groups constructed \emph{with geometric methods} in \cite{sala-schiffmann-17} by the second--named 
author and O. Schiffmann in terms of Hall algebras. 
In the forthcoming work \cite{appel-kuwagaki-sala-schiffmann-18}, we shall adopt 
a similar approach to show that continuum quantum groups admit analogous geometric
realizations arising from Hall algebras. 

In the remaining part of this introduction, we shall explain our work in greater detail.

%----------------------------------------------------------------------------------------------
\subsection*{The continuum Kac--Moody algebra}
%----------------------------------------------------------------------------------------------

The defining datum of a continuum Kac--Moody algebra is a continuum analogue 
of a quiver, defined as follows. Recall that the latter is just an oriented graph 
$\calQ=(\calQ_0, \calQ_1)$ with set of vertices $\calQ_0$ and a set of edges $\calQ_1$.
In a \emph{continuum} quiver, the discrete set $\calQ_0$ is replaced by a \emph{vertex space} 
$X$, which is, roughly, a Hausdorff topological space locally modeled over $\R$ (cf.\ Definition~\ref{def:topological-quiver}).
Examples of vertex spaces are the line $\R$, the circle $S^1=\R/\Z$, smoothings of possibly infinite trees, or 
combinations of these. Thus, it is possible to lift the notion of \emph{connected interval} from $\R$ to $X$, in 
such a way that the set of all possible \emph{intervals in $X$}, denoted $\intsf(X)$, is naturally 
endowed with two \emph{partially defined} operations, that is, a sum $\sgpp$, given by concatenation 
of intervals, and a difference $\sgpm$, given by set difference whenever the outcome is again in $\intsf(X)$. 

The set $\intsf(X)$ comes naturally equipped with a set--theoretic non--degenerate pairing 
$\rbf{\cdot}{\cdot}\colon$ $\intsf(X)\times\intsf(X)\to\Z$, defined as follows. For a locally 
constant, compactly supported, left--continuous function $h:\R\to\R$, we set 
$h_{\pm}(x)\coloneqq\lim_{\epsilon\to0^+}h(x\pm\epsilon)$ and define a 
non--symmetric bilinear form given by
\begin{align*}
	\abf{f}{g}\coloneqq \sum_{x} f_-(x)(g_-(x)-g_+(x)) 
\end{align*}
Identifying an interval $\ia$ with its characteristic function $\cf{\ia}$, we obtain a bilinear
form on $\intsf(\R)$. Then, we lift it from $\R$ to $X$ by decomposing every 
interval in $X$ into an iterated concatenation of \emph{elementary} intervals in $\R$. 
Finally, the \emph{Euler form} on $\intsf(X)$ is given by
$\rbf{\ia}{\ib}\coloneqq\abf{\ia}{\ib}+\abf{\ib}{\ia}$. We refer to the
datum $\calQ_X\coloneqq(\intsf(X), \sgpp,\sgpm,\abf{\cdot}{\cdot}, \rbf{\cdot}{\cdot})$ 
as the \emph{continuum quiver} of the vertex space $X$. Henceforth,
we shall denote by $\fun{X}$ the span of the characteristic functions $\cf{\ia}$, $\ia\in\intsf(X)$.

Given a continuum quiver $\calQ_X$, together with O. Schiffmann, we construct in \cite{appel-sala-schiffmann-18} 
a Lie algebra $\g_X $, which we refer to as the \emph{continuum Kac--Moody algebra of $\calQ_X$}, whose Cartan subalgebra is 
generated by the characteristic functions of the intervals of $X$. The definition of $\g_X $ mimics the usual construction 
of Kac--Moody algebras, with some fundamental differences controlled by the partial operations of $\calQ_X$. Namely, we 
first consider the Lie algebra $\wt{\g}_X$ over $\C$, freely generated by $\fun{X}$ and the elements $\xpm{\ia}$, $\ia\in\intsf(X)$, 
subject to the relations:
\begin{align*}
	\begin{array}{ll}
		[\xz{\ia},\xz{\ib} ]&=0\ ,\\
		{}[\xz{\ia} ,\xpm{\ib} ]&=\pm\rbf{\ia}{\ib}\cdot \xpm{\ib}\ ,\\
		{}[\xp{\ia} ,\xm{\ib} ]&=\drc{\ia\ib}\xz{\ia} +
		\ca{\ia}{\ib}\cdot (\xp{\ia\sgpm \ib}-\xm{\ib \sgpm \ia})\ ,
	\end{array}
\end{align*}
where $\xz{\ia} \coloneqq\cf{\ia}$ and $\ca{\ia}{\ib}\coloneqq(-1)^{\abf{{\ia} }{{\ib} }}\cdot\rbf{\ia}{\ib}$.
Then, we set $\g_X \coloneqq\wt{\g}_X/\r_X$, where $\r_X\subset\wt{\g}_X$ is the sum of all two--sided graded
\footnote{The gradation is with respect to $\fun{X}$: we set $\deg(\xpm{\ia})=\pm\cf{\ia}$ and $\deg(\xz{\ia})=0$.} 
ideals having trivial intersection with $\fun{X}$.

In \cite{appel-sala-schiffmann-18}, we show that the ideal $\r_X$ is generated by certain quadratic Serre relations 
governed by the concatenation of intervals, thus generalizing Gabber--Kac theorem for continuum Kac--Moody algebras
(cf.\ \cite{gabber-kac-81}) and obtaining an explicit description of $\g_X $ (cf.\ \cite[Thm.~5.17]{appel-sala-schiffmann-18} 
or Theorem~\ref{thm:ass-18} below). 
That is, $\g_X $ is generated by the abelian Lie algebra 
$\fun{X}$ and the elements $\xpm{\ia} $, $\ia\in \intsf(X)$, subject to the following 
defining relations:
\begin{enumerate}\itemsep0.2cm
	\item[] {\bf Diagonal action:} for $\ia, \ib\in\intsf(X)$, 
	\begin{align*}
		[\xz{\ia} , \xpm{\ib} ] =\pm \rbf{\ia} {\ib} \cdot\, \xpm{\ib} \ ;
	\end{align*}
	\item[] {\bf Double relations:} for $\ia, \ib\in\intsf(X)$, 
	\begin{align*}
		[\xp{\ia} ,\xm{\ib} ]=\drc{\ia\ib}\, \xz{\ia} +\ca{\ia}{\ib}\cdot\left(\xp{\ia\sgpm\ib}-\xm{\ib\sgpm\ia}\right) \ ;
	\end{align*} 
	\item[] {\bf Serre relations:} for $(\ia,\ib)\in \serre{X}$,
	\begin{align*}
		[\xpm{\ia} , \xpm{\ib} ]=  \pm\ca{\ia}{,\, \ia\sgpp\ib}\cdot \xpm{\ia\sgpp\ib} \ .
	\end{align*}
\end{enumerate}
Here, $\serre{X}$ is the set of all pairs $(\ia,\ib)\in\intsf(X)\times\intsf(X)$ such that one of the
following occurs:
\begin{itemize}\itemsep0.3cm
	\item
	$\ia$ is contractible, does not contain any \emph{critical} point of $\ib$ 
	(cf. Definition~\ref{def:topological-quiver}) 
	and, for subintervals $\ia'\subseteq \ia$ and $\ib'\subseteq\ib$ 
	with $\rbf{\ib}{\ib'}\neq0$ whenever $\ib'\neq\ib$, $\ia'\sgpp\ib'$ is either undefined or 
	non--homeomorphic to $S^1$;
	\item $\ia\perp\ib$, \ie $\ia\sgpp\ib$ does not exist and $\ia\cap\ib=\emptyset$.
\end{itemize}

As mentioned earlier, $\g_X $ can be equivalently realized as certain continuous 
colimits of Borcherds--Kac--Moody algebras, further motivating our choice of the 
terminology. This is based on the following observation. Let $\calJ=\{\ia_k\}_{k}$ 
be an \emph{irreducible} finite set of intervals $\ia_k\in\intsf(X)$, \ie
\begin{enumerate}\itemsep0.2cm
	\item every interval is either contractible or homeomorphic to $S^1$; 
	\item given two intervals $\ia,\ib\in\calJ$, $\ia\neq\ib$, one of the following mutually exclusive cases occurs:
	\begin{itemize}\itemsep0.2cm
		\item[(a)] $\ia\sgpp\ib$ exists;
		\item[(b)] $\ia\sgpp\ib$ does not exist and $\ia\cap\ib=\emptyset$;
		\item[(c)] $\ia\simeq S^1$ and $\ib\subset\ia$.
	\end{itemize}
\end{enumerate}
Let $\sfA_{\calJ}$ be the matrix given by the values of $\rbf{\cdot}{\cdot}$ on $\calJ$, 
\ie $\big(\sfA_{\calJ}\big)_{\ia\ib}=\rbf{\ia}{\ib}$ for $\ia,\ib\in \calJ$. Note that the diagonal
entries of $\sfA_{\calJ}$ are either $2$ or $0$, while the only possible
off--diagonal entries are $0,-1,-2$. Let $\calQ_{\calJ}$ be the corresponding quiver with Cartan matrix $\sfA_{\calJ}$. 
For example, we obtain the following quivers.

\begin{align*}
	\begin{array}{|c|c|}
		\hline
		\text{Configuration of intervals} & \text{Borcherds--Cartan diagram}\\
		\hline &\\
		%Configuration #6
		\begin{tikzpicture}[scale=.35]
			\begin{scope}[on background layer]
				\draw [white] (0,0) rectangle (15,10);
			\end{scope}
			%one-cells
			\draw [->, very thick, blue] (3,5) -- (6,5); 
			\draw [->, very thick, purple] (6,5) -- (9,5); 
			\draw [->, very thick, yellow] (9,5) -- (12,5);
			%label
			\node at (4.5, 6) {$\alpha_1$};
			\node at (7.5, 6) {$\alpha_2$};
			\node at (10.5, 6) {$\alpha_3$};
		\end{tikzpicture}
		&
		%Quiver #6
		\begin{tikzpicture}[scale=.35]
			\begin{scope}[on background layer]
				\draw [white] (0,0) rectangle (15,10);
			\end{scope}
			%vertices
			\node (V1) at (3.5,5)      [circle,draw=blue,fill=blue, inner sep=3pt]          {};
			\node (V2) at (7.5,5)      [circle,draw=purple,fill=purple, inner sep=3pt]    {};
			\node (V3) at (11.5,5) [circle,draw=yellow,fill=yellow, inner sep=3pt]   {};
			%edges
			\draw [->, very thick] (V1) -- (V2);  
			\draw [<-, very thick] (V3) -- (V2);
			%label
			\node at (3.5, 6)    {$\alpha_1$};
			\node at (7.5, 6)    {$\alpha_2$};
			\node at (11.5, 6)  {$\alpha_3$};
		\end{tikzpicture}
		\\
%		\hline
%	\end{array}
%\end{align*}
%\vspace{-1.25cm}
%\begin{align*}
%	\begin{array}{|c|c|}
		\hline &\\
		%Configuration #7
		\begin{tikzpicture}[scale=.35]
			\begin{scope}[on background layer]
				\draw [white] (0,0) rectangle (15,10);
			\end{scope}
			%one-cells
			\draw [->, very thick, purple] (7.5,2) -- (7.5,5); 
			\draw [->, very thick, blue] (7.5,5) arc (0:90:2.5); 
			\draw [->, very thick, yellow] (7.5,5) arc (180:90:2.5);
			%label
			\node at (6, 3.5) {$\alpha_2$};
			\node at (6, 8.5) {$\alpha_1$};
			\node at (9, 8.5) {$\alpha_3$};
		\end{tikzpicture}
		&
		%Quiver #7
		\begin{tikzpicture}[scale=.35]
			\begin{scope}[on background layer]
				\draw [white] (0,0) rectangle (15,10);
			\end{scope}
			%vertices
			\node (V1) at (3.5,5)      [circle,draw=blue,fill=blue, inner sep=3pt]          {};
			\node (V2) at (7.5,5)      [circle,draw=purple,fill=purple, inner sep=3pt]    {};
			\node (V3) at (11.5,5) [circle,draw=yellow,fill=yellow, inner sep=3pt]   {};
			%edges
			\draw [->, very thick] (V2) -- (V1);  
			\draw [<-, very thick] (V3) -- (V2);
			%label
			\node at (3.5, 6)    {$\alpha_1$};
			\node at (7.5, 6)    {$\alpha_2$};
			\node at (11.5, 6)  {$\alpha_3$};
		\end{tikzpicture}
		\\
		\hline
	\end{array}
\end{align*}
%\vspace{-1cm}
\begin{align*}
	\begin{array}{|c|c|}
		\hline &\\
		%Configuration #8
		\begin{tikzpicture}[scale=.35]
			\begin{scope}[on background layer]
				\draw [white] (0,0) rectangle (15,10);
			\end{scope}
			%one-cells
			\draw [<-, very thick, blue] (5,10) arc (180:270:2.5);
			\draw [->, very thick, purple] (10,5) arc (0:360:2.5);
			\node at (5, 7.5)    {$\alpha_1$};
			\node at (11, 5)    {$\alpha_2$};
		\end{tikzpicture}
		&
		%Quiver #8
		\begin{tikzpicture}[scale=.35]
			\begin{scope}[on background layer]
				\draw [white] (0,0) rectangle (15,10);
			\end{scope}
			%vertices
			\node (V1) at (5, 5)  [circle,draw=blue,fill=blue, inner sep=3pt]    {};
			\node (V3) at (10, 5)  [circle,draw=purple,fill=purple, inner sep=3pt]    {};
			%edges  
			\draw [->, very thick] (V3) -- (V1);
			\draw [->, very thick] (13,5) arc (0:360:1.5);  
			\node at (10, 5)  [circle,draw=purple,fill=purple, inner sep=3pt]    {};
			%labels
			\node at (5, 6)    {$\alpha_1$};
			\node at (9.5, 6)    {$\alpha_2$};
		\end{tikzpicture}
		\\
		\hline
	\end{array}\\
\end{align*}
Note, in particular, that any contractible elementary interval corresponds to a vertex of 
$\calQ_{\calJ}$ without loops, while any interval homeomorphic to $S^1$, corresponds 
to a vertex having exactly one loop. 

There are two Lie algebras naturally associated to $\calJ$:
\begin{enumerate}\itemsep0.2cm
	\item the Lie subalgebra $\g_{\calJ}\subset\g_X $ generated by the elements 
	$\{\xpm{\ia},\, \xz{\ia}\;\vert\; \ia\in \calJ\}$;
	\item the derived Borcherds--Kac--Moody algebra $\g_{\calJ}^{\scsop{BKM}}\coloneqq
	\g(\sfA_{\calJ})'$.
\end{enumerate}
In \cite[Section~5.5]{appel-sala-schiffmann-18}, we show that $\g_{\calJ}$ and 
$\g_{\calJ}^{\scsop{BKM}}$ are canonically isomorphic. In particular, $\g_X$ can be 
\emph{covered} by Borcherds--Kac--Moody algebras. Moreover, we show that,
given two \emph{compatible} irreducible sets $\calJ,\calJ'$,
there is an obvious embedding $\phi_{\calJ',\calJ}\colon \g_{\calJ}\to\g_{\calJ'}$,
and the collection of all such $\phi$'s is a direct system, so that we get a 
canonical isomorphism of Lie algebras  $\g_X\simeq\colim_{\calJ}\, \g_{\calJ}^{\scsop{BKM}}$
(cf.\ \cite[Cor. 5.18]{appel-sala-schiffmann-18} or Corollary~\ref{cor:bkm-sgp} below).

\subsection*{Continuum Lie bialgebras}

It is well--known that any symmetrisable Borcherds--Kac--Moody algebra $\g$ is endowed with 
a symmetric non--degenerate bilinear form, inducing an isomorphism of graded vector
spaces $\b_+\simeq\b_-^\star$ between the positive and negative Borel subalgebras,
and consequently defining a Lie bialgebra structure on $\g$. Moreover, the latter is 
quasi--triangular with respect to the canonical element $r\in\b_+\wh{\ten}\b_-$ 
corresponding to the perfect pairing $\b_+\ten\b_-\to\C$ (cf.\ Section~\ref{s:km-dj}).

The first contribution of this paper is the extension of these results for 
continuum Kac--Moody algebras. 

\begin{theorem*}[{cf. Theorem~\ref{thm:cont-km-lba}}]
	Let $\calQ_X$ be a continuum quiver and $\g_X$ the corresponding 
	continuum Kac--Moody algebras. 
	\begin{enumerate}[leftmargin=2em]\itemsep0.2cm
		\item The Euler form on $\fun{X}$ uniquely extends to 
		an invariant symmetric bilinear form $\rbf{\cdot}{\cdot}\colon\wt{\g}_X \ten\wt{\g}_X \to\C$
		defined on the generators as follows:
		\begin{align*}
			\rbf{\xz{\ia}}{\xz{\ib}}\coloneqq \rbf{\ia}{\ib},\quad 
			\rbf{\xpm{\ia}}{\xz{\ib}}\coloneqq 0,\quad 
			\rbf{\xpm{\ia}}{\xpm{\ib}}\coloneqq 0,\quad
			\rbf{\xp{\ia}}{\xm{\ib}}\coloneqq \drc{\ia\ib}.
		\end{align*}
		Moreover, $\ker\rbf{\cdot}{\cdot}=\r_X$ and therefore the Euler form descends
		to a non--degenerate invariant symmetric bilinear form on $\g_X$.
		\item There is a unique topological cobracket $\delta\colon \g_X \to\g_X \wh{\ten}\g_X $ 
		defined on the generators by
		\begin{align*}
			\delta(\xz{\ia})\coloneqq 0\quad\mbox{and}\quad
			\delta(\xpm{\ia} )\coloneqq \xzpm{\ia} \wedge\xpm{\ia}+\sum_{\ib\sgpp \ic=\ia} \ca{\ib}{,\, \ib\sgpp\ic}\cdot
			\xpm{\ib} \wedge\xpm{\ic} \ ,
		\end{align*}
		and inducing on $\g_X$ a topological Lie bialgebra structure, with respect to which
		the positive and negative Borel subalgebras $\b_X^{\pm}$ are Lie sub-bialgebras.
		\item The Euler form restricts to a non--degenerate pairing of Lie bialgebras 
		$\rbf{\cdot}{\cdot}\colon \b_X^+\ten(\b_X^-)^{\scsop{cop}}\to\C$. Then, the canonical 
		element $r_X\in\b_X^+\wh{\ten}\b_X^-$ corresponding to $\rbf{\cdot}{\cdot}$
		defines a quasi--triangular structure on $\g_X$.  
	\end{enumerate}
\end{theorem*}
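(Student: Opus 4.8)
The overall strategy is to install the invariant form on $\wt\g_X$, identify its kernel with $\r_X$, and then derive the Lie bialgebra structure and its quasi--triangularity essentially formally from the resulting perfect pairing, the only genuinely new input being the continuum bookkeeping. Uniqueness of the extension to $\wt\g_X$ is immediate: since $\wt\g_X$ is generated by $\fun{X}$ and the $\xpm{\ia}$, invariance determines any invariant form on brackets, hence on all of $\wt\g_X$, by induction on the $\fun{X}$--degree. For existence I would check, exactly as for symmetrisable Kac--Moody algebras, that the recursive prescription on the triangular decomposition $\wt\g_X=\wt\n_X^-\oplus\fun{X}\oplus\wt\n_X^+$ is consistent; the resulting form agrees, under $\g_X\simeq\colim_{\calJ}\g_{\calJ}^{\scsop{BKM}}$, with the classical invariant forms of the symmetric Borcherds--Kac--Moody algebras $\g_{\calJ}^{\scsop{BKM}}$ --- normalised by $(\hcor{\ia}|\hcor{\ib})=\rbf{\ia}{\ib}$, $(\xp{\ia}|\xm{\ib})=\drc{\ia\ib}$ and $0$ on the remaining pairs of generators --- which are compatible with the structure maps $\phi_{\calJ',\calJ}$ and thus provide an alternative construction. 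Symmetry then follows from uniqueness, since $\rbf{\cdot}{\cdot}^{\scsop{op}}$ is an invariant extension of the same symmetric datum. For the kernel: invariance and gradedness make $\ker\rbf{\cdot}{\cdot}$ a graded ideal, and it meets $\fun{X}=\wt\g_X[0]$ only in the kernel of the (non--degenerate) Euler form, hence trivially, so maximality of $\r_X$ forces $\ker\rbf{\cdot}{\cdot}\subseteq\r_X$. The reverse inclusion $\r_X\subseteq\ker\rbf{\cdot}{\cdot}$ is the one substantive point: splitting $\r_X=\r_X^-\oplus\r_X^+$ (possible since $\r_X$ is graded with $\r_X\cap\fun{X}=0$), one shows $\rbf{\r_X^+}{\wt\g_X}=\rbf{\r_X^-}{\wt\g_X}=0$ by a simultaneous induction on the height, moving a generator $\xm{\ib}$ (resp.\ $\xp{\ib}$) across the pairing via invariance and using that $[\r_X,\xm{\ib}]$ (resp.\ $[\r_X,\xp{\ib}]$) lies in $\r_X$ with trivial degree--$0$ component --- exactly as for symmetrisable Kac--Moody algebras; alternatively it follows from the classical non--degeneracy of the invariant form on symmetrisable Borcherds--Kac--Moody algebras together with $\r_X=\colim_{\calJ}\r_{\calJ}$, itself a consequence of the Serre--relation description (Theorem~\ref{thm:ass-18}). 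Hence $\ker\rbf{\cdot}{\cdot}=\r_X$, and the form descends to a non--degenerate invariant symmetric form on $\g_X$.

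\textbf{Parts (2) and (3).} I would produce the $r$--matrix first and read off the cobracket. Writing $\g_X=\n_X^-\oplus\fun{X}\oplus\n_X^+$ for the triangular decomposition (so $\b_X^\pm=\fun{X}\oplus\n_X^\pm$), the form vanishes on $\n_X^\pm\times\n_X^\pm$ and on $\n_X^\pm\times\fun{X}$, equals the Euler form on $\fun{X}\times\fun{X}$, and restricts to a graded perfect pairing $\n_X^+\times\n_X^-\to\C$ (each finite--dimensional piece $\g_X[\mu]$ pairing perfectly with $\g_X[-\mu]$, since $\rbf{\cdot}{\cdot}$ is non--degenerate on $\g_X$); consequently it induces a graded perfect pairing of Lie bialgebras $\rbf{\cdot}{\cdot}\colon\b_X^+\ten(\b_X^-)^{\scsop{cop}}\to\C$, which lives in the topological category precisely because the graded components are finite--dimensional. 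Let $r_X\in\b_X^+\wh{\ten}\b_X^-$ be the corresponding canonical element --- $\sum_{\beta}e_{\beta}\ten f^{\beta}$ over dual homogeneous bases of $\n_X^+$ and $\n_X^-$, together with the Cartan term dual to the Euler form --- which exists for the same reason. I would then \emph{define} $\delta(x)\coloneqq(\sfad_x\ten\id+\id\ten\sfad_x)(r_X)$: this is a continuous map $\g_X\to\g_X\wh{\ten}\g_X$ (for homogeneous $x$ the expression is a convergent sum in the graded completion), it is automatically a $1$--cocycle, its values are antisymmetric because $r_X+r_X^{21}$ is the invariant Casimir, and it satisfies co--Jacobi because $r_X$ solves the classical Yang--Baxter equation. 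The last point I would deduce from the Manin--triple picture: $\b_X^+\oplus(\b_X^-)^{\scsop{cop}}$ is the Drinfeld double of the pairing above, its tautological $r$--matrix solves the classical Yang--Baxter equation, and $r_X$ is its image under the surjection onto $\g_X$ that identifies the two copies of $\fun{X}$, so the equation descends (alternatively one checks it directly from invariance of the form). It then remains to match $\delta$ with the stated formula: one computes $(\sfad_{\xz{\ia}}\ten\id+\id\ten\sfad_{\xz{\ia}})(r_X)=0$, and, feeding the double relations $[\xp{\ia},\xm{\ib}]=\drc{\ia\ib}\,\xz{\ia}+\ca{\ia}{\ib}\,(\xp{\ia\sgpm\ib}-\xm{\ib\sgpm\ia})$ into $(\sfad_{\xpm{\ia}}\ten\id+\id\ten\sfad_{\xpm{\ia}})(r_X)$, one recovers the Cartan contribution $\xzpm{\ia}\wedge\xpm{\ia}$ together with the continuum of decomposition terms $\sum_{\ib\sgpp\ic=\ia}\ca{\ib}{,\,\ib\sgpp\ic}\,\xpm{\ib}\wedge\xpm{\ic}$. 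Since all of these terms lie in $\b_X^\pm\wh{\ten}\b_X^\pm$, the formula shows $\delta(\b_X^\pm)\subseteq\b_X^\pm\wh{\ten}\b_X^\pm$, so $\b_X^\pm$ are Lie sub--bialgebras; and by construction $(\g_X,\delta,r_X)$ is topologically quasi--triangular.

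\textbf{Main obstacle.} The delicate part is not the Lie--theoretic content but the continuum bookkeeping: one must equip $\g_X$, $\b_X^\pm$ and their completed tensor squares with topologies for which $r_X$ is a bona fide element, the operators $\sfad_x\ten\id+\id\ten\sfad_x$ act continuously, the uncountable sums defining $\delta(x)$ --- and already those appearing in $\delta(\xpm{\ia})$, indexed by the decompositions $\ia=\ib\sgpp\ic$ --- converge, and $\delta\wh{\ten}\id$ is defined on $\g_X\wh{\ten}\g_X$ so that co--Jacobi can even be formulated; one must then check that forming these completions commutes with $\colim_{\calJ}$. Everything else --- the cocycle and Casimir--invariance identities, the matching of the two descriptions of $\delta$, and the derivation of the classical Yang--Baxter equation from the double --- runs parallel to the Borcherds--Kac--Moody case and, modulo the partial operations $\sgpp$ and $\sgpm$, is a routine (if lengthy) computation on generators.
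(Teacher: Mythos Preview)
Your overall strategy --- build the invariant form first, extract the canonical element $r_X$, and then \emph{define} $\delta$ as the coboundary of $r_X$ --- is a legitimate route and is genuinely different from the paper's. The paper works in the opposite order, following Halbout: it puts cobrackets on the free Lie algebras $\FL{\pm}$ \emph{before} any form is in sight (Proposition~\ref{prop:halb}), defines a Lie bialgebra pairing $\FL{+}\otimes\FL{-}\to\bsfld$, shows that the diagonal and Serre relations generate orthogonal \emph{coideals} (Propositions~\ref{prop:diag-rel} and~\ref{prop:serre-rel}), and only then forms the matched pair $\d_+\dcs\d_-$, verifies the double relations there (Proposition~\ref{prop:db-rel}), and identifies the quotient with $\g_X$. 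Non--degeneracy then drops out because the kernel of the induced pairing is a graded ideal missing $\fun{X}$. Your approach is closer to the classical Kac--Moody story and has the virtue of making quasi--triangularity essentially tautological; the paper's approach has the virtue that the cobracket is visibly well--defined from the start (it is constructed on free objects and shown to descend), so the ``continuum bookkeeping'' you flag as the main obstacle is handled structurally rather than by a separate convergence check.

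There is, however, a concrete gap in your colimit argument for the form. You claim the standard invariant forms on the $\g_{\calJ}^{\scsop{BKM}}$ are compatible with the structure maps $\phi_{\calJ,\calJ'}$. For $\phi''$ with $\ic=\ia\sgpp\ib$ one computes, using invariance and $[\xp{\ib},\xm{\ia}]=0$, that
\[
\rbf{\phi''(\xp{\ic})}{\phi''(\xm{\ic})}=-\rbf{[\xp{\ia},\xp{\ib}]}{[\xm{\ia},\xm{\ib}]}=-\rbf{\ia}{\ib},
\]
which equals $1$ when $\ic$ is contractible ($\rbf{\ia}{\ib}=-1$) but equals $2$ when $\ic\simeq S^1$ ($\rbf{\ia}{\ib}=-2$); so the forms are \emph{not} compatible in the imaginary case, and the colimit does not produce the form with the stated normalisation $\rbf{\xp{\ia}}{\xm{\ib}}=\drc{\ia\ib}$. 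This is the same phenomenon behind the paper's explicit warning (in the introduction) that one \emph{cannot} rely on the colimit realisation here, because the $\phi_{\calJ,\calJ'}$ already fail to respect the cobracket. Your alternative direct Kac--style construction of the form on $\wt\g_X$ and the induction for $\r_X\subseteq\ker\rbf{\cdot}{\cdot}$ should go through, but you should drop the colimit justification (or restrict it to vertex spaces without circles) and, as the paper does at the start of Section~\ref{s:cont-km-lba}, treat the case $X=S^1$ separately by adjoining a derivation to repair the degeneracy of the Euler form on $\fun{S^1}$.
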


Note however that in order to prove this result one cannot rely on the colimit realization 
of $\g_X$ given above, since the embeddings $\phi_{\calJ',\calJ}\colon \g_{\calJ}^{\scsop{BKM}}
\to\g_{\calJ'}^{\scsop{BKM}}$, do not respect the cobracket, as clear from their definition (cf.\ Corollary~\ref{cor:bkm-sgp}). 
Instead, our proof is based on an alternative realization of $\g_X$ \emph{by duality}, inspired by
the work of G. Halbout \cite{halbout-99} which relies on a semi--classical version of techniques
coming from the foundational theory of quantum groups \cite{drinfeld-quantum-groups-87,lusztig-book-94}.

By the result above, we can now associate to any continuum quiver $\calQ_X$ a topological
quasi--triangular Lie bialgebra $(\g_X,[\cdot,\cdot],\delta)$. The second and main contribution of
this paper is the algebraic explicit construction of a quantization $\DJ{\g_X}$, \ie a topological
quasi--triangular Hopf algebra over $\hext{\C}$ such that
\begin{enumerate}\itemsep0.2cm
	\item there exists an isomorphism of Hopf algebras $\DJ{\g_X}/\hbar\DJ{\g_X}\simeq \bfU\g_X$;
	\item for any $x\in\g_X$,
	\begin{align*}
		\delta(x)=\frac{\Delta(\wt{x})-\Delta^{21}(\wt{x})}{\hbar}\mod\hbar\ ,
	\end{align*}
	where $\wt{x}\in\DJ{\g_X}$ is any lift of $x\in\g_X$.
\end{enumerate} 
We refer to $\DJ{\g_X}$ as \emph{the continuum quantum group of $\calQ_X$}.

%----------------------------------------------------------------------------------------------
\subsection*{The continuum quantum group}
%----------------------------------------------------------------------------------------------
The definition of $\DJ{\g_X}$ is very similar in spirit to that of $\g_X$, but it depends on two 
additional partial operations on $\intsf(X)$:
\begin{enumerate}\itemsep0.2cm
	\item the \emph{strict union of two non--orthogonal intervals $\ia$ and $\ib$}, whenever defined, 
	is the smallest interval $\iM{\ia}{\ib}\in\intsf(X)$ for which $(\iM{\ia}{\ib})\sgpm\ia$ 
	and $(\iM{\ia}{\ib})\sgpm\ib$ are both defined;
	\item the \emph{strict intersection of two non--orthogonal intervals $\ia$ and $\ib$}, whenever defined, 
	is the biggest interval $\im{\ia}{\ib}\in\intsf(X)$ for which $\ia\sgpm(\im{\ia}{\ib})$ and 
	$\ib\sgpm(\im{\ia}{\ib})$ are both defined.
\end{enumerate}
Note that $\iM{\ia}{\ib}$ (resp. $\im{\ia}{\ib}$) is defined and coincides with $\ia\cup\ib$ 
(resp. $\ia\cap\ib$) whenever it contains \emph{strictly} $\ia$ and $\ib$ (resp.\ it is contained 
\emph{strictly} in $\ia$ and $\ib$).

\begin{nndefinition}[cf. Definition \ref{def:cont-qg}]
	Let $\calQ_X$ be a continuum quiver. The \emph{continuum quantum group of $X$} 
	is the associative algebra $\bfU_q\g_X $ generated by $\fun{X}$ 
	and the elements $\qxpm{\ia}$, $\ia\in\intsf(X)$, satisfying the following defining relations:
	\begin{enumerate}\itemsep0.2cm
		\item {\bf Diagonal action:} for any $\ia,\ib\in\intsf(X)$,
		\begin{align*}
			[\xz{\ia},\xz{\ib}]=0\qquad\mbox{and}\qquad [\xz{\ia},\qxpm{\ib}]=\pm\rbf{\ia}{\ib}\qxpm{\ib}\ .
		\end{align*}
		In particular, for $\qxz{\ia}{}\coloneqq\exp(\hbar/2\cdot\xz{\ia})$,
		it holds $\qxz{\ia}{}\qxpm{\ib}=q^{\pm\rbf{\ia}{\ib}}\cdot\qxpm{\ib}\qxz{\ia}{}$.
		
		\item {\bf Quantum double relations:} for any $\ia,\ib\in\intsf(X)$,
		\begin{align*} 
			[\qxp{\ia} ,\qxm{\ib} ] = \drc{\ia \ib}&\frac{\qxz{\ia}{}-\qxz{\ia}{-1}}{q-q^{-1}}\\ 
			&+\ca{\ia}{\ib}\cdot\left(q^{\qcc{\ia}{\ib}{+}}\, \qxp{\ia\sgpm\ib}
			\, \qxz{\ib}{\ca{\ia}{\ib}}-
			q^{\qcc{\ia}{\ib}{-}}\, \qxz{\ia}{\ca{\ia}{\ib}}\, \qxm{\ib\sgpm\ia}\right)\\
			&\quad+\qcb{\ib}{\ia}{}\, q^{\qcb{\ib}{\ia}{}}\,(q-q^{-1})\,\qxp{(\iM{\ia}{\ib})\sgpm\ib}\,
			\qxz{\im{\ia\, }{\, \ib}}{\qcb{\ia}{\ib}{}}\,\qxm{(\iM{\ia}{\ib})\sgpm{\ia}}\ .
		\end{align*}
		\item {\bf Quantum Serre relations:} for any $(\ia,\ib)\in\serre{X}$,
		\begin{align*}
			\qxpm{\ia}\qxpm{\ib}-q^{\qcr{\ia}{\ib}{}}&\cdot\qxpm{\ib}\qxpm{\ia}=
			\pm\qcb{\ia}{\ib}{}\cdot q^{\qcs{\ia}{\ib}{\pm}}\cdot\qxpm{\ia\sgpp\ib}
			+\qcb{\ia}{\ib}{}\cdot(q-q^{-1})\cdot\qxpm{\iM{\ia}{\ib}}\qxpm{\im{\ia\, }{\,\ib}}\ .
		\end{align*}
	\end{enumerate}
\end{nndefinition}

In the definition above, we assume that $\qxpm{\ia \odot \ib }=0$ whenever $\ia \odot \ib $ 
is not defined, for $\odot=\sgpp,\sgpm, \iM{}{}, \im{}{}$. Moreover, the coefficients are defined
as follows:
\begin{itemize}\itemsep0.2cm
	\item $\displaystyle \ca{\ia}{\ib}\coloneqq(-1)^{\abf{\ia}{\ib}}\rbf{\ia}{\ib}$;
	\item $\displaystyle \qcb{\ia}{\ib}{}\coloneqq\ca{\ia}{,\, \iM{\ia}{\ib}}$;
	\item $\displaystyle \qcc{\ia}{\ib}{+}\coloneqq\half\left(\ca{\ib}{,\,\ia\sgpm\ib}-1\right)$
	and $\displaystyle \qcc{\ia}{\ib}{-}\coloneqq\half\left(\ca{\ib\sgpm\ia}{,\,\ia}+1\right)$;
	\item $\displaystyle \qcr{\ia}{\ib}{}\coloneqq(1-\drc{\ia\ib})(-1)^{\abf{\ia}{\ib}}\rbf{\ia}{\ib}^2$;
	\item $\displaystyle \qcs{\ia}{\ib}{\pm}\coloneqq\half\left(\ca{\ib}{,\, \ia\sgpp\ib}\pm1\right)$.
\end{itemize}

In order to prove that $\DJ{\g_X}$ is naturally endowed with a topological quasi--triangular Hopf
algebra structure, we proceed as in the classical case, by showing that $\DJ{\g_X}$ can be also
realized \emph{by duality}. This leads to the following.

\begin{theorem*}[{cf. Theorem~\ref{thm:cont-qg-bia}}]
	Let $\calQ_X$ be a continuum quiver and $\bfU_q\g_X $ the corresponding 
	continuum quantum group. 
	\begin{enumerate}\itemsep0.2cm
		\item
		The algebra $\DJ{\g_X}$ is a topological Hopf algebra with respect to the maps
		\begin{align*}
			\Delta\colon \DJ{\g_X}\to\DJ{\g_X}\wh{\ten}\DJ{\g_X}\quad \text{and} \quad \varepsilon\colon \DJ{\g_X}\to\hext{\C} \ ,
		\end{align*}
		defined on the generators
		by $\varepsilon(\xz{\ia})\coloneqq 0\eqqcolon \varepsilon(\qxpm{\ia})$, $\Delta(\xz{\ia})\coloneqq \xz{\ia}\ten1+1\ten\xz{\ia}$, and
		\begin{align*}
			\Delta(\qxp{\ia})&\coloneqq \qxp{\ia}\otimes 1+ K_\ia \otimes \qxp{\ia}
			+\sum_{\ia=\ib\sgpp\ic}\, \ca{\ic}{, \,\ib\sgpp\ic} \, \qcs{\ib}{\ic}{-}\cdot q^{-1}(q-q^{-1})\, \qxp{\ib} K_\ic\otimes \qxp{\ic}\ ,\\[3pt]
			\Delta(\qxm{\ia})&\coloneqq 1\otimes \qxm{\ia}+ \qxm{\ia}\otimes K_\ia^{-1}
			-\sum_{\ia=\ib\sgpp\ic}\, \ca{\ic}{, \,\ib\sgpp\ic} \, \qcs{\ib}{\ic}{-}\cdot (q-q^{-1})\, \qxm{\ic} \otimes \qxm{\ib} K_\ic^{-1}\ .
		\end{align*}
		In particular, $\varepsilon(\qxz{\ia}{})=1$ and $\Delta(\qxz{\ia}{})=\qxz{\ia}{}\ten\qxz{\ia}{}$. As usual, the antipode is given by the formula 
		\begin{align*}
			S\coloneqq\sum_n m^{(n)}\circ(\id-\iota\circ\varepsilon)^{\ten n}\circ\Delta^{(n)}\ ,
		\end{align*}
		where $m^{(n)}$ and $\Delta^{(n)}$ denote the $n$th iterated product and coproduct, respectively.
		\item 
		Denote by $\DJ{{\b_X^{\pm}}}$ the Hopf subalgebras generated by 
		$\fun{X}$ and $\qxpm{\ia}$, $\ia\in\intsf(X)$. Then, there exists a unique non--degenerate 
		Hopf pairing 
		$\rbf{\cdot}{\cdot}$
		$\colon \DJ{{\b}_X^+}\ten(\DJ{{\b}_X^-})^{\scsop{cop}}\to\C(\negthinspace(\hbar)\negthinspace)$,
		defined on the generators by
		\begin{align*}
			\rbf{1}{1}\coloneqq 1\ , \quad 
			\rbf{\xz{\ia}}{\xz{\ib}} \coloneqq \frac{1}{\hbar}\rbf{\ia}{\ib}\ , \quad
			\rbf{\qxp{\ia}}{\qxm{\ib}}\coloneqq \frac{\delta_{\ia\ib}}{q-q^{-1}}\ ,
		\end{align*}
		and zero otherwise. In particular, $\rbf{\qxz{\ia}{}}{\qxz{\ib}{}}=q^{\rbf{\ia}{\ib}}$.
		\item Through the Hopf pairing $\rbf{\cdot}{\cdot}$, the Hopf algebras 
		$(\DJ{\b_X^{+}}, \DJ{\b_X^{-}})$ give rise to a match pair of Hopf algebras. 
		Then, $\DJ{\g_X}$ is realized as a quotient of the double cross product Hopf algebra
		$\DJ{{\b}_X^+}\dcs\DJ{\b_X^-}$ obtained by identifying the two copies of the commutative 
		subalgebra $\fun{X}$. In particular, $\DJ{\g_X}$ is a topological quasi--triangular Hopf algebra. 
		\item
		The topological quasi--triangular Hopf algebra $\DJ{\g_X}$ is a quantization of the 
		topological quasi--triangular Lie bialgebra $\g_X$. 
	\end{enumerate}
\end{theorem*}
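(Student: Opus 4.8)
The plan is to establish parts (1)--(4) of the theorem in sequence, following the strategy ``realization by duality'' that the introduction already advertises, and which parallels the classical Drinfeld--Jimbo construction recalled in Section~\ref{s:km-dj}. The key organizing idea is that one does \emph{not} verify the Hopf axioms directly on the presentation of $\DJ{\g_X}$; instead one builds the Borel halves $\DJ{\b_X^{\pm}}$ abstractly, produces the Hopf pairing between them, forms the double cross (co)product, and then recognizes $\DJ{\g_X}$ as a canonical quotient. This circumvents the combinatorial nightmare of checking coassociativity and the bialgebra compatibility against the quantum Serre and quantum double relations term by term.

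\medskip

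\noindent\textbf{Step 1 (the Borel halves and their Hopf structure).} I would first treat $\DJ{\b_X^{+}}$ (and symmetrically $\DJ{\b_X^{-}}$) as the algebra on $\fun{X}$ and the $\qxp{\ia}$ subject only to the diagonal action and the quantum Serre relations, equipped with the coproduct written in part (1). One must check that $\Delta$ is a well--defined algebra map, i.e.\ that the proposed formula respects the quantum Serre relations; this is a finite computation for each pair $(\ia,\ib)\in\serre{X}$, but it is genuinely the technical heart of Step~1 and should be reduced, via the colimit picture $\g_X\simeq\colim_{\calJ}\g_{\calJ}^{\scsop{BKM}}$, to the corresponding statement for ordinary Drinfeld--Jimbo quantum groups attached to the rank~$\le 3$ Borcherds--Cartan data depicted in the introduction — with the one new feature that the strict union/intersection terms $\qxpm{\iM{\ia}{\ib}}\qxpm{\im{\ia}{\ib}}$ must be tracked; these vanish whenever the configuration is ``classical'' (no genuine strict union exists) and otherwise reproduce a quantum Serre relation in a slightly larger quiver. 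Coassociativity of $\Delta$ and counitality are then verified on generators only, and the antipode is defined by the stated recursive formula, automatically an antihomomorphism once $\Delta$ is a coassociative algebra map. Completeness/topological issues (the sum over $\ia=\ib\sgpp\ic$ is locally finite, so lives in the completed tensor product $\wh{\ten}$) are handled exactly as for $\g_X$ itself.

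\medskip

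\noindent\textbf{Step 2 (the Hopf pairing).} Next I construct the pairing $\rbf{\cdot}{\cdot}\colon\DJ{\b_X^+}\ten(\DJ{\b_X^-})^{\scsop{cop}}\to\C(\negthinspace(\hbar)\negthinspace)$ of part (2). Uniqueness is immediate from the Hopf--pairing axioms $\rbf{xy}{z}=\rbf{x\ten y}{\Delta(z)}$, $\rbf{x}{yz}=\rbf{\Delta^{21}(x)}{y\ten z}$, which force the value on any monomial from the values on generators; existence requires checking that these prescriptions are consistent with the defining relations of both Borels, which again, by the colimit argument, follows from the known non--degeneracy of the Drinfeld--Jimbo Hopf pairing for each $\g_{\calJ}^{\scsop{BKM}}$, together with the observation that the $\hbar^{-1}$--scaled Cartan pairing $\frac{1}{\hbar}\rbf{\ia}{\ib}$ is exactly the one making $\qxz{\ia}{}$ group--like with $\rbf{\qxz{\ia}{}}{\qxz{\ib}{}}=q^{\rbf{\ia}{\ib}}$. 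Non--degeneracy is inherited from the non--degeneracy of $\rbf{\cdot}{\cdot}$ on $\fun{X}$ (Theorem~\ref{thm:cont-km-lba}(1)) and from compatibility with the $\fun{X}$--grading, exactly as in the classical case; I would phrase this as: a non--zero element of the radical would have to lie in a single graded piece $\g_{\calJ}$--component for some finite irreducible $\calJ$, contradicting the classical statement.

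\medskip

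\noindent\textbf{Step 3 (double cross product and identification of $\DJ{\g_X}$).} With a non--degenerate Hopf pairing in hand, $(\DJ{\b_X^+},\DJ{\b_X^-})$ form a matched pair and one builds the double cross (co)product Hopf algebra $\DJ{\b_X^+}\dcs\DJ{\b_X^-}$ (Majid's quantum double construction, adapted to the topological setting). The commutative subalgebra $\fun{X}$ sits in both factors; quotienting by the Hopf ideal generated by $\xz{\ia}\ten 1 - 1\ten\xz{\ia}$ yields a topological Hopf algebra, and the cross relations $\qxp{\ia}\qxm{\ib}=\sum\rbf{\cdot}{\cdot}\cdots$ computed from the pairing reproduce — after bookkeeping of the $q$--powers $\qcc{\ia}{\ib}{\pm},\qcb{\ia}{\ib}{}$ and of the strict union/intersection correction term — precisely the quantum double relations in the Definition. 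Hence this quotient is isomorphic to $\DJ{\g_X}$ as an algebra, which simultaneously endows $\DJ{\g_X}$ with its Hopf structure and, via the standard formula for the quasi--triangular structure on a quantum double (the canonical element $R$ dual to the pairing, lying in $\DJ{\b_X^+}\wh{\ten}\DJ{\b_X^-}$), with a topological quasi--triangular structure. The one point demanding care here is that the quantum double relation printed in the Definition is slightly unusual — the coefficient of $\xz{\ia}$ is $\frac{\qxz{\ia}{}-\qxz{\ia}{-1}}{q-q^{-1}}$ rather than $\xz{\ia}$ — so Step~3 really shows that the Definition's relations are the correct ``exponentiated Cartan'' form of the cross relations, i.e.\ I must match generator choices between the Definition and the double.

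\medskip

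\noindent\textbf{Step 4 (quantization).} Finally, part (4): $\DJ{\g_X}$ is $\hbar$--torsion--free and topologically free over $\hext{\C}$ by construction (PBW--type bases for each $\DJ{\g_{\calJ}^{\scsop{BKM}}}$ pass to the colimit), so $\DJ{\g_X}/\hbar\DJ{\g_X}$ makes sense; one checks it is isomorphic to $\bfU\g_X$ by comparing presentations — the relations of the Definition reduce mod $\hbar$ to those listed for $\g_X$ after dividing the double relations by $(q-q^{-1})$ and taking the limit $q\to 1$ (the strict union/intersection terms carry an explicit factor $(q-q^{-1})$ and therefore drop out), which is exactly the classical presentation from Theorem~\ref{thm:ass-18}. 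The semiclassical limit of the coproduct, $\delta(x)=\hbar^{-1}(\Delta(\wt x)-\Delta^{21}(\wt x))\bmod\hbar$, is then computed on the generators $\xpm{\ia}$ directly from the formula for $\Delta(\qxpm{\ia})$ in part (1): the $\qxp{\ia}\ten1+K_\ia\ten\qxp{\ia}$ part contributes $\xzp{\ia}\wedge\xp{\ia}$ (expanding $K_\ia=1+\tfrac{\hbar}{2}\xz{\ia}+\cdots$), and the sum over $\ia=\ib\sgpp\ic$, whose coefficient contains the factor $q^{-1}(q-q^{-1})=\hbar+O(\hbar^2)$, contributes $\sum_{\ib\sgpp\ic=\ia}\ca{\ib}{,\,\ib\sgpp\ic}\,\xp{\ib}\wedge\xp{\ic}$ after using the symmetry $\qcs{\ib}{\ic}{-}\to\tfrac12(\ca{\ic}{,\,\ib\sgpp\ic}\cdots)$ and reindexing — reproducing exactly the cobracket of Theorem~\ref{thm:cont-km-lba}(2). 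This matching of leading terms, together with compatibility of the two $R$--matrices (the classical $r_X$ is the $\hbar^{-1}$--leading term of $R-1$), completes the proof that $\DJ{\g_X}$ quantizes $\g_X$.

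\medskip

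\noindent I expect the principal obstacle to be Step~1, specifically verifying that the coproduct $\Delta$ is compatible with the quantum Serre relations in the presence of the strict--union correction term: unlike the purely Drinfeld--Jimbo case, one cannot simply cite the literature, and one must argue that every instance reduces — via an appropriate compatible irreducible set $\calJ$ (possibly enlarged to accommodate $\iM{\ia}{\ib}$ and $\im{\ia}{\ib}$) — to a finite--rank Borcherds--Kac--Moody computation; organizing this reduction cleanly, so that the partiality of $\sgpp,\sgpm,\iM{}{},\im{}{}$ does not cause the various quotients to be incompatible, is the delicate part. Everything downstream (Steps~2--4) is then a matter of applying the standard matched--pair/quantum--double machinery in the topological setting and carrying out the semiclassical expansion, which is routine once the Borel halves are under control.
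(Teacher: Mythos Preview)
Your overall architecture (build the Borel halves, construct the Hopf pairing, form the double cross product, identify with $\DJ{\g_X}$, take the semiclassical limit) matches the paper's. However, there is a genuine gap in how you propose to execute Step~1 and the existence part of Step~2. You want to reduce the verification that $\Delta$ respects the quantum Serre relations, and the consistency of the Hopf pairing, to the Drinfeld--Jimbo case via the colimit realization. But the paper explicitly warns (in the introduction, immediately before stating the first main theorem) that the colimit embeddings $\phi_{\calJ,\calJ'}$ do \emph{not} respect the cobracket, and the same obstruction holds at the quantum level: the maps $\phi''_{\calJ,\calJ'}$ of Corollary~\ref{cor:q-bkm-sgp} are only algebra morphisms, not coalgebra morphisms. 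Concretely, the coproduct $\Delta(\qxp{\ia})$ involves the infinite sum over \emph{all} decompositions $\ia=\ib\sgpp\ic$ in $\intsf(X)$, which never lands inside any $\DJ{\g_{\calJ}^{\scsop{BKM}}}$ for finite $\calJ$. So the coalgebraic checks cannot be reduced to finite--rank Borcherds--Kac--Moody computations in the way you suggest, no matter how one enlarges $\calJ$.

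The paper circumvents this by the quantum analogue of the Halbout construction of Section~\ref{s:cont-km-lba}: one first defines $\Delta_{\pm}$ and the pairing on the \emph{free} associative algebras $\calH_{\pm}$ (where well--definedness is automatic), and then shows that the ideals $\calI_{\pm}$ generated by the diagonal and quantum Serre relations are coideals orthogonal to $\calH_{\mp}$, using the quantum version of the criterion in Proposition~\ref{prop:halb}(3). This replaces your proposed direct verification by (a) an orthogonality check of the ideal \emph{generators} against the \emph{generators} of the other side, and (b) a structural check on $\Delta_{\pm}$ applied to each relation --- both of which are genuinely finite computations that do not invoke the colimit. The colimit is then used only at the very end, to prove non--degeneracy of the pairing (an algebra--level statement), which is the one place where your appeal to the finite--rank Drinfeld--Jimbo theory is legitimate and agrees with the paper.
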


Moreover, we prove that, as in the classical case, the continuum quantum group can be
realized as an uncountable colimits of Drinfeld--Jimbo quantum groups.

\begin{theorem*}[{cf.\ Corollary~\ref{cor:q-bkm-sgp}}]
	Let $\calJ,\calJ'$ be two irreducible (finite) sets of intervals in $X$.
	\begin{enumerate}\itemsep0.2cm
		\item Let $\bfU_q{\g_{\calJ}}$ be the Hopf subalgebra in $\bfU_q\g_X$ generated by the elements
		$\xz{\ia}$ and $\qxpm{\ia}$, with $\ia\in\calJ$. Then, there is a canonical isomorphism of algebras
		$\bfU_q{\g_{\calJ}^{\scsop{BKM}}}\to\bfU_q{\g_{\calJ}}$.
		\item If $\calJ'\subseteq\calJ$, there is a canonical embedding
		$\phi'_{\calJ,\calJ'}\colon \bfU_q\g_{\calJ'}\to\bfU_q\g_{\calJ}$ sending generator to generator.
		\item If $\calJ$ is obtained from $\calJ'$ by replacing an element $\ic\in\calJ'$
		with two intervals $\ia ,\ib $ such that $\ic=\ia \sgpp \ib $,
		there is a canonical embedding $\phi''_{\calJ,\calJ'}\colon \bfU_q\g_{\calJ'}\to\bfU_q\g_{\calJ}$, 
		which is the identity on the diagrammatic subalgebra  $\bfU_q\g_{\calJ'\setminus\{\ic\}}=\bfU_q\g_{\calJ\setminus\{\ia,\ib\}}$
		and sends
		\begin{align*}
			\xz{\ic} \mapsto \xz{\ia } +\xz{\ib } \ , \quad 
			\qxpm{\ic}  \mapsto 
			\mp\qcb{\ia}{\ib}{-1}\cdot q^{-\qcs{\ia}{\ib}{\pm}}\cdot
			\left(\qxpm{\ia}\qxpm{\ib}-q^{\qcr{\ia}{\ib}{}}\cdot\qxpm{\ib}\qxpm{\ia}\right)\ .
		\end{align*}
		\item The collection of embeddings $\phi'_{\calJ,\calJ'}, \phi''_{\calJ,\calJ'}$, 
		indexed by all possible irreducible sets of intervals in $X$, form a direct system. 
		Moreover, there is a canonical isomorphism of algebras  
		\begin{align*}
			\bfU_q\g_X\simeq\colim_{\calJ}\, \bfU_q\g_{\calJ}^{\scsop{BKM}} \ .
		\end{align*}
	\end{enumerate}
\end{theorem*}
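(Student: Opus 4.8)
The plan is to mirror the proof of the classical Corollary~\ref{cor:bkm-sgp} in \cite[\S 5.5]{appel-sala-schiffmann-18}, carried along the quantization, reducing every injectivity claim to its classical counterpart via the following \emph{reduction principle}: by Theorem~\ref{thm:cont-qg-bia}~(4) both $\DJ{\g_X}$ and the $\hbar$--adic Drinfeld--Jimbo algebras $\DJ{\g_\calJ^{\scsop{BKM}}}$ are topologically free over $\hext{\C}$, with reductions $\DJ{\g_X}/\hbar\DJ{\g_X}\simeq\bfU\g_X$ and $\DJ{\g_\calJ^{\scsop{BKM}}}/\hbar\DJ{\g_\calJ^{\scsop{BKM}}}\simeq\bfU\g_\calJ^{\scsop{BKM}}$, so an $\hext{\C}$--algebra map between such algebras is injective (resp.\ an isomorphism) as soon as it is so modulo $\hbar$. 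For Part~(1) I would produce an algebra map $\psi_\calJ\colon\DJ{\g_\calJ^{\scsop{BKM}}}\to\DJ{\g_X}$ sending the Chevalley--type generators $E_\ia,F_\ia,K_\ia^{\pm 1}$ (one triple per $\ia\in\calJ$) to $\qxp\ia,\qxm\ia,\qxz\ia{\pm 1}$. Well--definedness is a direct check of the Drinfeld--Jimbo relations of the symmetric Borcherds--Cartan matrix $\sfA_\calJ=(\rbf\ia\ib)_{\ia,\ib\in\calJ}$ (the vertices $\ia\simeq S^1$, carrying $a_{\ia\ia}=0$, being the imaginary simple roots) among the images: irreducibility of $\calJ$ guarantees that for distinct $\ia,\ib\in\calJ$ each of $\ia\sgpm\ib$, $\ib\sgpm\ia$, $\iM\ia\ib$, $\im\ia\ib$ is either undefined or carries a vanishing structure constant (in case~(c), for instance, $\rbf\ia\ib=0$, hence $\ca\ia\ib=0$), so all exotic summands in the quantum double and quantum Serre relations drop out; evaluating $\ca{}{},\qcb{}{}{},\qcc{\ia}{\ib}{\pm},\qcr{\ia}{\ib}{},\qcs{\ia}{\ib}{\pm}$ in terms of the entries $\rbf\ia\ib\in\{0,-1,-2\}$ then matches the surviving relations with the defining relations of $\DJ{\g_\calJ^{\scsop{BKM}}}$. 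By construction $\psi_\calJ$ is surjective onto the subalgebra $\DJ{\g_\calJ}$; it respects the $\fun X$--weight gradings and reduces modulo $\hbar$ to the classical isomorphism $\bfU\g_\calJ^{\scsop{BKM}}\xrightarrow{\sim}\bfU\g_\calJ\subseteq\bfU\g_X$ of \cite[\S 5.5]{appel-sala-schiffmann-18}, so by the reduction principle it is injective, hence an isomorphism.

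For Part~(2), if $\calJ'\subseteq\calJ$ then $\sfA_{\calJ'}$ is a principal submatrix of $\sfA_\calJ$ and ``generator to generator'' defines $\phi'_{\calJ,\calJ'}\colon\DJ{\g_{\calJ'}}\to\DJ{\g_\calJ}$, well defined by the bookkeeping of Part~(1) and injective by reduction modulo $\hbar$ to the classical embedding. Part~(3) is the substantive one. I would set $\phi''_{\calJ,\calJ'}$ equal to the identity on the diagrammatic subalgebra $\DJ{\g_{\calJ'\setminus\{\ic\}}}=\DJ{\g_{\calJ\setminus\{\ia,\ib\}}}$ and to the stated formula on $\xz\ic,\qxpm\ic$, and verify the relations. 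The inputs are: (i) $\cf\ic=\cf\ia+\cf\ib$, so $\xz\ia+\xz\ib$ (and $\qxz\ia{}\qxz\ib{}$) act on $\DJ{\g_\calJ}$ exactly as $\xz\ic$ (resp.\ $\qxz\ic{}$), settling the diagonal relations; (ii) since $\ia,\ib$ are the two pieces of an interval of an irreducible set one has $(\ia,\ib)\in\serre X$ while $\iM\ia\ib,\im\ia\ib$ involve no genuine strict union/intersection, so the quantum Serre relation for $(\ia,\ib)$ collapses to
\begin{align*}
\qxpm\ia\qxpm\ib-q^{\qcr{\ia}{\ib}{}}\qxpm\ib\qxpm\ia=\pm\qcb{\ia}{\ib}{}\,q^{\qcs{\ia}{\ib}{\pm}}\cdot\qxpm{\ia\sgpp\ib}\ ,
\end{align*}
which, solved for $\qxpm{\ia\sgpp\ib}=\qxpm\ic$ (legitimate since $\qcb{\ia}{\ib}{}=\ca{\ia}{,\ic}\neq 0$ for a non--degenerate split), is precisely the defining formula for $\phi''(\qxpm\ic)$; (iii) the remaining relations — the quantum double relation $[\qxp\ic,\qxm\ic]$, and the quantum double and quantum Serre relations coupling $\ic$ to $\calJ'\setminus\{\ic\}$ — then follow by substituting the formula for $\phi''(\qxpm\ic)$ and manipulating inside $\DJ{\g_\calJ}$, this being the $\hbar$--deformation of the corresponding computation in \cite[\S 5.5]{appel-sala-schiffmann-18}. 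Injectivity of $\phi''_{\calJ,\calJ'}$ is again obtained by reduction modulo $\hbar$ to the classical embedding.

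For Part~(4), the collection $\{\phi'_{\calJ,\calJ'},\phi''_{\calJ,\calJ'}\}$ forms a direct system: the index set of irreducible sets is filtered (any two admit a common irreducible refinement), and coherence — any two chains of $\phi'$'s and $\phi''$'s with the same source and target composing to the same map — follows because all these maps are determined by their values on generators, which one compares over a common refinement. Each inclusion $\DJ{\g_\calJ}\hookrightarrow\DJ{\g_X}$ is compatible with $\phi',\phi''$: clear for $\phi'$, and for $\phi''$ it is the identity $\qxpm\ic=\mp\qcb{\ia}{\ib}{-1}q^{-\qcs{\ia}{\ib}{\pm}}\bigl(\qxpm\ia\qxpm\ib-q^{\qcr{\ia}{\ib}{}}\qxpm\ib\qxpm\ia\bigr)$ holding in $\DJ{\g_X}$, i.e.\ the quantum Serre relation of~(ii). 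Hence there is a canonical algebra map $\Phi\colon\colim_\calJ\DJ{\g_\calJ^{\scsop{BKM}}}=\colim_\calJ\DJ{\g_\calJ}\to\DJ{\g_X}$, surjective since every generator $\xz\ia,\qxpm\ia$ of $\DJ{\g_X}$ already lies in $\DJ{\g_{\{\ia\}}}$. Finally, $(-)\otimes_{\hext{\C}}\C$ is a left adjoint, hence commutes with the filtered colimit, while filtered colimits preserve $\hbar$--torsion--freeness, so $\Phi$ reduces modulo $\hbar$ to the classical isomorphism $\colim_\calJ\bfU\g_\calJ^{\scsop{BKM}}\xrightarrow{\sim}\bfU\g_X$ of Corollary~\ref{cor:bkm-sgp}; applying the reduction principle ($\fun X$--weight by weight) gives that $\Phi$ is an isomorphism, so $\DJ{\g_X}\simeq\colim_\calJ\DJ{\g_\calJ^{\scsop{BKM}}}$.

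The main obstacle is step~(iii) of Part~(3): verifying that the composite element $\mp\qcb{\ia}{\ib}{-1}q^{-\qcs{\ia}{\ib}{\pm}}\bigl(\qxpm\ia\qxpm\ib-q^{\qcr{\ia}{\ib}{}}\qxpm\ib\qxpm\ia\bigr)$ obeys every defining relation that $\qxpm\ic$ obeys. Unlike the classical case, where the analogue is one application of the Jacobi identity, here one must transport the $q$--powers $\qcc{\ia}{\ib}{\pm},\qcr{\ia}{\ib}{},\qcs{\ia}{\ib}{\pm}$ and the $(q-q^{-1})$--corrections through each relation and confirm the requisite polynomial identities among the structure constants (such as $\qcs{\ia}{\ib}{+}-\qcs{\ia}{\ib}{-}=1$ and the additivity of $\ca{\ia}{\ib}$ and $\qcc{\ia}{\ib}{\pm}$ along concatenations). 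Essentially all the computational content of the corollary is concentrated in that check; everything else is formal once the reduction principle and Corollary~\ref{cor:bkm-sgp} are available.
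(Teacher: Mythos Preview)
Your proposal is correct and mirrors the paper's construction of the maps $\psi_\calJ$, $\phi'$, $\phi''$ and the verification of surjectivity essentially verbatim (the paper records these as Proposition~\ref{prop:q-bkm-sgp} and Corollary~\ref{cor:q-bkm-sgp}, declaring the checks ``straightforward''). The one genuine difference is how injectivity is obtained. The paper does \emph{not} use your reduction--mod--$\hbar$ principle; instead, in the course of proving Theorem~\ref{thm:cont-qg-bia} it observes that each $\psi_\calJ\colon\DJ{\g_\calJ^{\scsop{BKM}}}\to\DJ{\g_X}$ preserves the Hopf pairing, and since the pairing on the Drinfeld--Jimbo side is already known to be non--degenerate, $\psi_\calJ$ must be injective (and the pairing on $\DJ{\g_X}$ is non--degenerate). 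The final isomorphism $\colim_\calJ\DJ{\g_\calJ^{\scsop{BKM}}}\simeq\DJ{\g_X}$ is then recorded as a separate corollary \emph{after} Theorem~\ref{thm:cont-qg-bia}. Your route is logically sound but slightly more roundabout: you invoke Theorem~\ref{thm:cont-qg-bia}(4) for topological freeness of $\DJ{\g_X}$, yet in the paper that very fact is established through the pairing argument just described, so you are using the pairing indirectly. The paper's approach has the advantage of giving injectivity and non--degeneracy simultaneously in one stroke, without needing the classical Corollary~\ref{cor:bkm-sgp} as an input; your approach has the virtue of making the parallel with the classical case completely explicit and of isolating the $\hbar$--deformation content cleanly.
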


%--------------------------------------------------------------------------
\subsection*{The quantum groups of the line and the circle}
%--------------------------------------------------------------------------

In \cite{sala-schiffmann-17}, the second--named author and O. Schiffmann introduced the 
line quantum group $\bfU_q \sl(\R)$ and the circle quantum group $\bfU_q \sl(S^1)$, the latter arising from the Hall algebra of parabolic (torsion) coherent sheaves on a curve. These are the simplest examples of continuum quantum groups. Namely, we get the
following.
\begin{theorem*}[{cf.\ Propositions~\ref{prop:slR-presentation} and \ref{prop:q-slR-presentation}}]
	There exists a canonical isomorphism of topological Hopf algebras $\bfU_q\sl(\R)\to\bfU_q\g_\R$. 
	At $q=1$, it gives rise to an isomorphism of topological Lie bialgebras $\sl(\R)\to\g_\R$.
\end{theorem*}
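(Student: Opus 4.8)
The plan is to produce a presentation of the line quantum group $\bfU_q\sl(\R)$ — as constructed in \cite{sala-schiffmann-17} from the Hall algebra of parabolic torsion sheaves on a curve — and then match it generator-by-generator and relation-by-relation with the defining presentation of $\bfU_q\g_\R$ given in Definition~\ref{def:cont-qg}. The first step is to recall the geometric construction of \cite{sala-schiffmann-17}: the relevant Hall algebra is built from parabolic sheaves supported at a point, whose simple objects are indexed by half-open intervals $\ia\subset\R$, and one extracts from it a Drinfeld double whose generators are the classes $[S_\ia]^{\pm}$ together with a torus part indexed by $\fun{\R}$. I would write down explicitly, from that paper, the commutation relations satisfied by these classes — in particular the Hall-algebra product $[S_\ia]\cdot[S_\ib]$ in terms of extensions by $S_{\ia\sgpp\ib}$ and by $S_{\iM{\ia}{\ib}},S_{\im{\ia}{\ib}}$, which is governed by counting subsheaves — and the double relations coming from the Green/Xiao pairing.

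Next I would match coefficients. The correspondence $\qxpm{\ia}\leftrightarrow[S_\ia]^{\pm}$, $\xz{\ia}\leftrightarrow K_\ia$ (up to a normalization $\exp(\hbar/2\cdot\xz{\ia})$) is the obvious candidate, and on the line every interval is contractible, so all the Euler-form data $\abf{\ia}{\ib}$, $\rbf{\ia}{\ib}$, $\ca{\ia}{\ib}$ reduce to the elementary combinatorics of nested/overlapping/disjoint half-open intervals in $\R$. The key identities to verify are: (i) the diagonal action, which is immediate from the grading on the Hall algebra; (ii) the quantum Serre relations — here the Hall-algebra product of two simples $[S_\ia][S_\ib]$ versus $q^{\qcr{\ia}{\ib}{}}[S_\ib][S_\ia]$ should collapse, via the short exact sequences available for intervals on $\R$, to exactly $\qcb{\ia}{\ib}{}q^{\qcs{\ia}{\ib}{\pm}}[S_{\ia\sgpp\ib}]+\qcb{\ia}{\ib}{}(q-q^{-1})[S_{\iM{\ia}{\ib}}][S_{\im{\ia}{\ib}}]$, with the signs and half-integer powers of $q$ tracked through $(-1)^{\abf{\ia}{\ib}}$ and $\half(\ca{\ib}{,\,\ia\sgpp\ib}\pm1)$; and (iii) the quantum double relation, which follows from the Drinfeld-double formula once (i)–(ii) are in place. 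Once the relations match, the map on generators extends to an algebra homomorphism $\bfU_q\sl(\R)\to\bfU_q\g_\R$; surjectivity is clear since generators hit generators, and injectivity follows because both sides are realized as colimits of Drinfeld--Jimbo quantum groups $\bfU_q\g_\calJ^{\scsop{BKM}}$ (for $\bfU_q\g_\R$ this is the colimit theorem cited as Corollary~\ref{cor:q-bkm-sgp}; for $\bfU_q\sl(\R)$ the analogous statement is in \cite{sala-schiffmann-17}) and the map is compatible with the colimit structure. Finally, I would check the map intertwines the coproducts: it suffices to compare $\Delta(\qxpm{\ia})$ on generators, which on both sides is the Drinfeld-double coproduct dictated by the Hopf pairing of Theorem~\ref{thm:cont-qg-bia}(2); and then specialize at $q=1$, i.e.\ reduce mod $\hbar$, to recover the Lie bialgebra isomorphism $\sl(\R)\to\g_\R$ — this last part is essentially automatic given part~(4) of that theorem.

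The main obstacle I anticipate is the bookkeeping in step (ii)–(iii): reconciling the normalization conventions of \cite{sala-schiffmann-17} (where structure constants come out of point-counts over $\F_q$ and Euler forms of coherent sheaves) with the combinatorial conventions of the present paper (where $\abf{\cdot}{\cdot}$ is defined via left-continuous step functions on $\R$ and the half-integer exponents $\qcc{\ia}{\ib}{\pm}$, $\qcs{\ia}{\ib}{\pm}$ are built by hand). In particular one must be careful that the "strict union/intersection" terms $\qxpm{\iM{\ia}{\ib}}\qxpm{\im{\ia}{\ib}}$ — which have no analogue in the discrete Kac--Moody case and are precisely the new feature forced by overlapping intervals — correspond exactly to the length-two composition series appearing in the Hall product of two overlapping simples, with the correct power of $q-q^{-1}$. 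Establishing that dictionary carefully, likely by first passing to a single irreducible set $\calJ$ where everything is a finite-rank Drinfeld--Jimbo computation and then taking colimits, is the heart of the argument; the rest is formal.
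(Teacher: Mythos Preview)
Your route is not the paper's, and it is more elaborate than needed. The paper does \emph{not} go back to the Hall-algebra geometry of \cite{sala-schiffmann-17} at all. Instead, it records in Definition~\ref{def:q-sl(K)} an explicit presentation of $\bfU_q\sl(\K)$ by generators $E_\ia,F_\ia,H_\ia$ subject to Kac--Moody, join, and nest relations (this presentation is what \cite{sala-schiffmann-17} already produced), and then proves Proposition~\ref{prop:q-slR-presentation} by a direct, entirely elementary, case-by-case computation: for each of the eleven possible relative positions of two intervals $\ia,\ib$ in $\R$ listed in the table of Remark~\ref{rmk:coefficient-table} (adjacent, overlapping, nested, etc.), one checks by hand that the quantum Serre and quantum double relations of Definition~\ref{def:cont-qg} hold in $\bfU_q\sl(\K)$, and conversely that the join and nest relations follow from them. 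The isomorphism is given by the explicit rescaling $\qxp{\ia}\mapsto q^{1/2}E_\ia$, $\qxm{\ia}\mapsto q^{-1/2}F_\ia$, $\xz{\ia}\mapsto H_\ia$; this half-power of $q$ is essential to make the coefficients line up and is missing from your proposed dictionary. No colimit or injectivity argument is needed, because showing that each set of relations implies the other already gives mutually inverse algebra maps.

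Two further remarks. First, a minor factual point: it is the \emph{circle} quantum group $\bfU_q\sl(S^1)$ that arises from the Hall algebra of parabolic torsion sheaves in \cite{sala-schiffmann-17}; the line quantum group is simply defined there by presentation, so your first step of extracting relations from sheaf-counting would be aimed at the wrong object. Second, your anticipated difficulty---matching Hall-algebra normalizations to the combinatorial Euler form $\abf{\cdot}{\cdot}$---is real, but the paper sidesteps it entirely by never touching the geometry. What your approach would buy, if carried out, is a conceptual explanation for \emph{why} the strict union/intersection terms $\qxpm{\iM{\ia}{\ib}}\qxpm{\im{\ia}{\ib}}$ appear (namely, as length-two composition series in the Hall product of overlapping simples); the paper's proof gives no such insight but is much shorter, since once the presentation of Definition~\ref{def:q-sl(K)} is granted, the comparison is a finite list of two-variable identities in $q$.
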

The case of the circle is slightly more delicate. Namely, the continuum Kac--Moody algebra $\g_{S^1}$ 
contains strictly the Lie algebra $\sl(S^1)$. Their difference is reduced to the elements $\xpm{S^1}$ 
corresponding to the full circle. More precisely, let $\ol{\g}_{S^1}$ be the subalgebra in $\g_{S^1}$ 
generated by the elements $\xpm{\ia}$, $\xz{\ia}$, $\ia\neq S^1$. Note that the elements $\xpm{S^1}$, $\xz{S^1}$,
generate a Heisenberg Lie algebra of order one in $\g_{S^1}$, which we denote $\heis_{S^1}$. Then, 
$\g_{S^1}=\ol{\g}_{S^1}\oplus\heis_{S^1}$ and there is a canonical embedding $\sl(S^1)\to\g_{S^1}$, 
whose image is $\ol{\g}_{S^1}\oplus\bsfld\cdot \xz{S^1}$. A similar relation holds for 
the Hopf algebras $\bfU_q \sl(S^1)$ and $\bfU_q \g_{S^1}$, where the role of $\heis_{S^1}$ is 
played by the subalgebra generated in $\bfU_q \g_{S^1}$ by $\xz{S^1}$ and $\qxpm{S^1}$.

%----------------------------------------------------------------------------------------------
\subsection*{Future directions}
%----------------------------------------------------------------------------------------------

In this last section, we shall outline some further directions of 
research, currently under investigations.

%----------------------------------------------------------------------------------------------
\subsubsection*{Geometric quantization}
%----------------------------------------------------------------------------------------------

As mentioned earlier, the continuum quantum groups  $\bfU_q\sl(S^1)$ and 
$\bfU_q\sl(\R)$ originate from a Hall algebra type construction. More precisely, the 
\emph{rational} circle quantum group $\bfU_q\sl(\Q/\Z)$ was realized in \cite{sala-schiffmann-17}  
in two different ways. That is, by the second--named author and O. Schiffmann, 
as the (reduced) quantum double of the spherical Hall algebra of torsion 
parabolic sheaves on a smooth projective curve over a finite field, 
and, by T. Kuwagaki, from the spherical Hall algebra of locally constant 
sheaves on $\Q/\Z$ with fixed singular support. The latter approach generalizes 
easily to $\R$ and $S^1$, and to the type $D$ case (a smooth tree with one root,
one node, and two leaves). 

In \cite{appel-kuwagaki-sala-schiffmann-18}, together with T. Kuwagaki and O. Schiffmann, 
we will provide two geometric realization of $\bfU_q\g_X $ arising from Hall algebras associated 
with the following abelian categories defined over a finite field. We first consider the category of 
\emph{coherent persistence modules} (extending the definition given in \cite{sala-schiffmann-19} 
for the line $\R$ and the circle $S^1$ to an arbitrary continuum quiver). Such objects can be thought 
of as a generalization of the usual notion of parabolic torsion sheaves on a curve, mimicking the first
realization of the circle quantum group. The analogue of the second \emph{symplectic} realization
is instead obtained from the category of locally constant sheaves over the underlying vertex space.

%----------------------------------------------------------------------------------------------
\subsubsection*{Highest weight theory}
%----------------------------------------------------------------------------------------------

In general, the usual combinatorics governing the highest weight theory of 
Borcherds--Kac--Moody algebras does not extend in a straightforward way to 
continuum Kac--Moody algebras, mainly due to the lack of simple roots. The 
appropriate tools to describe the highest weight theory of $\g_X $, the corresponding 
continuum Weyl group, and the character formulas, are currently under study. 
The same difficulties arise also at the quantum level.

Nonetheless, the geometric realization of continuum quantum groups would 
likely help towards a better understanding of its representation theory.
An inspiring example is given in \cite{sala-schiffmann-19}, where the second--named 
author and O. Schiffmann define the Fock space for $\bfU_q \sl(\R)$, considering 
a continuum analogue of the usual combinatorial construction in the case of $\bfU_q \sl(\infty)$. 
In addition, the quantum group $\bfU_q \sl(S^1)$ act on such a Fock space, in a way similar 
to the \emph{folding procedure} of Hayashi--Misra--Miwa. This construction should
extend to the case of an arbitrary continuum quiver $X$, producing a wide class of 
interesting representations for the continuum quantum group $\bfU_q \g_X $, and 
therefore for the continuum Kac--Moody algebra $\g_X$.

%----------------------------------------------------------------------------------------------
\subsection*{Outline}
%----------------------------------------------------------------------------------------------

In Section~\ref{s:km-dj}, we recall the basic definition of Kac--Moody algebras 
and Drinfeld--Jimbo quantum groups in the more general framework 
of quantization of Lie bialgebras.
In Section~\ref{s:cont-km}, we provide a concise exposition of the construction
of continuum Kac--Moody algebras, as introduced in \cite{appel-sala-schiffmann-18}, 
and their realization as uncountable colimits of Borcherds--Kac--Moody algebras.
In Section~\ref{s:cont-km-lba}, we prove the first main result of the paper, 
showing that continuum Kac--Moody algebras are
naturally endowed with a standard {topological} quasi--triangular 
Lie bialgebra structure.
In Section~\ref{s:cont-qg}, we define the continuum quantum group associated
to a continuum quiver and show that, in the cases of $\R$ and $S^1$, it coincides
with the quantum groups of the line and the circle introduced in \cite{sala-schiffmann-17}. 
Finally, in Section~\ref{s:cont-qg-bia}, we prove the second main result of the paper, 
showing that continuum quantum groups are topological quasi--triangular 
Hopf algebra, quantizing the standard Lie bialgebra structure of continuum 
Kac--Moody algebras.

%----------------------------------------------------------------------------------------------
\subsection*{Acknowledgements}
%----------------------------------------------------------------------------------------------

The second--named author would like to thank Professor Kyoji Saito for his contribution to create the Mathematics Group at Kavli IPMU, where the author worked as a postdoc, and to establish the GTM seminar, where \cite{sala-schiffmann-17} was presented, prompting the collaboration with
Tatsuki Kuwagaki. Moreover, this paper was begun while the first--named author was visiting Kavli IPMU. He is grateful to Kavli IPMU for its hospitality and wonderful working conditions. Finally, we would like to thank Tatsuki Kuwagaki and Olivier Schiffmann for many enlightening conversations, and Fabio Gavarini for his comments and interest in this work.

%\addtocontents{toc}{\protect\setcounter{tocdepth}{1}} %subsections  do not appear in TOC

%----------------------------------------------------------------------------------------------
\bigskip\section{Kac--Moody algebras and quantum groups}\label{s:km-dj}
%----------------------------------------------------------------------------------------------

In this section, we recall the basic definition of Kac--Moody algebras 
and Drinfeld--Jimbo quantum groups in the more general framework 
of quantization of Lie bialgebras. The results of this section are well--known
and due to \cite{kac-90, drinfeld-quantum-groups-87}. We follow the exposition
of \cite{appel-toledano-18}.   

Henceforth, we fix a base field $\bsfld$ of characteristic zero and set $\bsfldh\coloneqq\hext{\bsfld}$.

%-----------------------------------------------------------------------
\subsection{Quantization of Lie bialgebras}\label{ss:quant-lba}
%-----------------------------------------------------------------------

A \emph{Lie bialgebra} is a triple $(\b,[\cdot,\cdot]_{\b}, \delta_{\b})$ where
\begin{enumerate}\itemsep0.2cm
	\item $\b$ is a discrete $\bsfld$--vector space;
	\item  $(\b,[\cdot,\cdot]_{\b})$ is a Lie algebra, \ie $[\cdot,\cdot]_{\b}\colon \b\ten\b\to\b$
	is anti-symmetric and satisfies the Jacobi identity
	\begin{align}\label{eq:Jacobi}
		[\cdot,\cdot]_{\b}\circ\id_{\b}\ten[\cdot,\cdot]_{\b}\circ(\id_{\b^{\ten 3}}+(1\,2\,3)+(1\,3\,2))=0\ ;
	\end{align}
	\item $(\b,\delta_{\b})$ is a Lie coalgebra, \ie $\delta_{\b}\colon\b\to\b\ten\b$
	is anti-symmetric and satisfies the co--Jacobi identity
	\begin{align}\label{eq:coJacobi}
		(\id_{\b^{\ten 3}}+(1\,2\,3)+(1\,3\,2))\circ\id_{\b}\ten\delta_{\b}\circ\delta_{\b}=0\ ;
	\end{align}
	\item the cobracket $\delta_{\b}$ satisfies the \emph{cocycle condition}
	\begin{align}\label{eq:cocycle}
		\delta_{\b}\circ[\cdot,\cdot]_{\b}=\sfad_{\b}\circ\id_{\b}\ten\delta_{\b}\circ(\id_{\b^{\ten 2}}-(1\,2))\ ,
	\end{align}
	as maps $\b\ten\b\to\b\ten\b$, 
	where $\sfad_{\b}\colon \b\ten\b\ten\b\to\b\ten\b$ denotes the left adjoint action of $\b$
	on $\b\ten\b$.
\end{enumerate}

A \emph{quantized enveloping algebra} (QUE) is a Hopf algebra $B$ in $\vectK$ such that 
\begin{enumerate}\itemsep0.2cm
	\item $B$ is endowed with the $\hbar$--adic topology, that is $\{\hbar
	^n B\}_{n\geq 0}$ is a basis of neighborhoods of $0$. Equivalently,
	$B$ is isomorphic, as topological $\bsfldh$--module, to $\hext{B_0}$, for
	some discrete topological vector space $B_0$.
	\item $B/\hbar B$ is a connected, cocommutative Hopf algebra over
	$\bsfld$. Equivalently, $B/\hbar B$ is isomorphic to an enveloping algebra $\bfU\b$ 
	for some Lie bialgebra $(\b, [\cdot,\cdot]_{\b}, \delta_{\b})$ and, under this identification,
	\begin{align*}
		\delta_{\b}(b)=\frac{\Delta(\wt{b})-\Delta^{21}(\wt{b})}{\hbar} \mod\hbar\ ,
	\end{align*}
	where $\wt{b}\in B$ is any lift of $b\in\b$. 
\end{enumerate}
We say that $B$ is a \emph{quantization} of $(\b, [\cdot,\cdot]_{\b}, \delta_{\b})$.

In Sections~\ref{ss:km}--\ref{ss:DJ} we will describe the standard Lie bialgebra structure on
symmetrisable Kac--Moody algebras and their quantization provided by Drinfeld--Jimbo
quantum groups.

%----------------------------------------------------
\bigskip\subsection{Kac--Moody algebras}\label{ss:km}
%-----------------------------------------------------

We recall the definition from \cite[Chapter~1]{kac-90}. Fix a finite set $\bfI$ 
and a matrix $\gcm =(a_{ij})_{i,j\in\bfI}$ with entries in $\bsfld$. Recall that a realization $(\h,\Pi,\Pi^{\vee})$ of $\gcm$ is the datum of 
a finite dimensional $\bsfld$--vector space $\h$, and linearly independent vectors 
$\Pi\coloneqq \{\alpha_i\}_{i\in\bfI}\subset\h^\ast$, $\Pi^{\vee}\coloneqq \{h_i\}_{i\in\bfI} 
\subset\h$ such that $\alpha_i(h_j)= a_{ji}$. One checks easily that, in any realization 
$(\h,\Pi,\Pi^{\vee})$, $\dim\h\geqslant 2\vert\bfI\vert-\rank(\rls{\gcm})$. Moreover,
up to a (non--unique) isomorphism, there is a unique realization of minimal dimension 
$2\vert\bfI\vert-\rank(\rls{\gcm})$. 

For any realization $\rls{\gcm}=(\h,\Pi,\Pi^{\vee})$, let $\wt{\g}(\rls{\gcm})$ be the Lie 
algebra generated by $\h$, $\{e_i, f_i\}_{i\in\bfI}$ with relations $[h,h']=0$, for any $h,h'\in\h$, 
and
\begin{align}\label{eq:km-rel}
	[h,e_i]=\alpha_i(h)\, e_i\ ,
	\quad
	[h,f_i]=-\alpha_i(h)\, f_i\ ,
	\quad
	[e_i,f_j]=\drc{ij}\, h_i\ .
\end{align}
Set
\begin{align*}
	\rtl_+\coloneqq\bigoplus_{i\in\bfI}\Z_{\geqslant0}\, {\alpha}_i\subseteq{\h}^\ast\ ,
\end{align*}
$\rtl\coloneqq\rtl_+\oplus(-\rtl_+)$, and denote by $\wt{\n}_{+}$ (resp. $\wt{\n}_-$) the 
subalgebra generated by $\{e_i\}_{i\in\bfI}$ (resp. $\{f_i\}_{i\in\bfI}$). Then, as vector spaces, 
$\wt{\g}(\rls{\gcm})=\wt{\n}_+\oplus\h\oplus\wt{\n}_-$ and, with respect to $\h$, one has 
the root space decomposition
\begin{align*}
	\wt{\n}_{\pm}=\bigoplus_{\genfrac{}{}{0pt}{}{\alpha\in\rtl_+}{\alpha\neq0}}\, \wt{\g}_{\pm\alpha}
\end{align*}
where $\wt{\g}_{\pm\alpha}=\{X\in\wt{\g}(\rls{\gcm})\;\vert\;\forall h\in\h, [h,X]=\pm\alpha(h)X\}$. 
Note also that $\wt{\g}_0=\h$ and $\dim\wt{\g}_{\pm\alpha}<\infty$.

The \emph{Kac--Moody algebra} corresponding to the realization $\rls{\gcm}$ is the Lie 
algebra $\g(\rls{\gcm})\coloneqq\wt{\g}(\rls{\gcm})/\r$, where $\r$ is the sum of all 
two--sided graded ideals in $\wt{\g}(\rls{\gcm})$ having trivial intersection with $\h$. 
In particular, as ideals, $\r=\r_+\oplus\r_-$, where $\r_{\pm}\coloneqq\r\cap\wt{\n}_{\pm}$. 
\footnote{The terminology differs slightly from the one given in \cite{kac-90} where 
	$\g(\rls{\gcm})$ is called a Kac--Moody algebra if $\gcm$ is a generalised Cartan 
	matrix (cf.\ Remark~\ref{rem:r=s}) and $\rls{\gcm}$ is the minimal realization. Note also 
	that in \cite[Theorem~1.2]{kac-90} $\r$ is set to be the sum of all two--sided ideals, not 
	necessarily graded. However, since the functionals $\alpha_i$ are linearly independent in 
	$\h^\ast$ by construction, $\r$ is automatically graded and satisfies $\r=\r_+\oplus\r_-$ 
	(cf.\ \cite[Proposition~1.5]{kac-90}).} 
%Henceforth, by a slight abuse of notation, we will write $\g(\gcm)$ for $\g(\rls{\gcm})$.

Since $\r=\r_+\oplus\r_-$, the Lie algebra $\g(\rls{\gcm})$ has an induced triangular decomposition 
$\g(\rls{\gcm})=\n_-\oplus\h\oplus\n+$ (as vector spaces), where
\begin{align*}
	\n_\pm\coloneqq \bigoplus_{\genfrac{}{}{0pt}{}{\alpha\in\rtl_+}{\alpha\neq0}}\g_{\pm\alpha}\ , \quad 
	\g_{\alpha}\coloneqq \{X\in{\g(\rls{\gcm})}\;\vert \; \forall\, h\in{\h},\, [h,X]=\alpha(h)\, X\}\ .  
\end{align*}
Note that $\dim\g_{\alpha}<\infty$. The set of positive roots is denoted $\rts_+\coloneqq 
\{\alpha\in\rtl_+\setminus\{0\}\;\vert \; {\g}_{\alpha}\neq0\}$.

%derived subalgebra

\begin{remark}\label{rem:der-km}
	
	The derived subalgebra $\g(\rls{\gcm})'\coloneqq[\g(\rls{\gcm}),
	\g(\rls{\gcm})]$ has a similar and somewhat simpler description. One can show
	easily that $\g(\rls{\gcm})'$ is generated by the Chevalley generators $\{e_,f_i,h_i\}_{i\in\bfI}$ 
	and admits a presentation similar to that of $\g(\rls{\gcm})$. Namely, let $\wt{\g}'$ be the 
	Lie algebra generated by $\{h_i,e_i,f_i\}_{i\in\bfI}$ with relations
	\begin{align*}
		[h_i,h_j]=0,
		\quad
		[h_j,e_i]=\alpha_i(h_j)\, e_i,
		\quad
		[h_j,f_i]=-\alpha_i(h_j)\, f_i,
		\quad
		[e_i,f_j]=\drc{ij}\, h_i.
	\end{align*}
	Then, $\wt{\g}'$ has a $\rtl$--gradation defined by $\deg(e_i)=\alpha_i$, $\deg(f_i)=-\alpha_i$,
	and $\deg(h_i)=0$. Clearly, $\wt{\g}'_0=\h'$, where the latter is the $|\bfI|$--dimensional span of 
	$\{h_i\}_{i\in\bfI}$. The quotient of $\wt{\g}'$ by the sum of all two--sided graded ideals with 
	trivial intersection with $\h'$ is easily seen to be canonically isomorphic to $\g(\rls{\gcm})'$.  
\end{remark}

%Canonical realizations

\begin{remark}\label{rem:ext-KM}
	
	It is sometimes convenient to consider the Kac--Moody algebras associated to a (non--minimal)
	\emph{canonical realization}, which allows to obtain a presentation similar to that of the
	derived subalgebra  (cf.\ \cite{feigin-zelevinsky-85, maulik-okounkov-12, appel-toledano-16}).
	Namely, let $\crls{\gcm}=(\ol{\h},\ol{\Pi},\ol{\Pi}^\vee)$ be the realization given by 
	$\ol{h}\cong\bsfld^{2|\bfI |}$ with basis $\{\hcor{i}\}_{i\in\bfI}\cup\{\cow{i}\}_{i\in\bfI}$, 
	$\ol{\Pi}^\vee=\{\hcor{i}\}_{i\in\bfI}$ and $\ol{\Pi}=\{\alpha_i\}_{i\in\bfI}\subset\ol{\h}^\ast$, 
	where $\alpha_i$ is defined by
	\begin{align*}
		\alpha_i(\hcor{j} )=a_{ji}
		\qquad\mbox{and}\qquad
		\alpha_i(\cow{j} )=\delta_{ij} \ .
	\end{align*}
	Then, $\wt{\g}(\crls{\gcm})$ is the	Lie algebra generated by $\{\hcor{i}, \cow{i}, e_i,f_i\}_{i\in\bfI}$ 
	with relations
	\begin{align*}
		[\hcor{i}, \hcor{j}]=0\ , \quad [\hcor{i}, \cow{j}]=0\ , \quad [\cow{i}, \cow{j}]=0 \ ,\\
%	\end{align*}
%	\begin{align*} 
		[\hcor{i},e_j]=a_{ij}\, e_j\ ,
		\quad
		[\hcor{i},f_j]=-a_{ij}\, f_j\ ,\\
		[\cow{i},e_j]=\drc{ij}\,e_j\ ,
		\quad
		[\cow{i},f_j]=-\drc{ij}\, f_j\ ,
	\end{align*}
	and $[e_i,f_j]=\drc{ij}\, h_i$.
	It is easy to check that the Kac--Moody algebra $\g(\crls{\gcm})$ is just a central extension 
	of $\g(\mrls{\gcm})$, \ie $\g(\crls{\gcm})\simeq\g(\mrls{\gcm})\oplus\c$, with 
	$\dim\c=\rank(\gcm)$.
\end{remark}

%Serre relations

\begin{remark}\label{rem:r=s}
	
	It is well--known that  in certain cases the ideal $\r$ can be described explicitly.
	If $\gcm $ is a \emph{generalised Cartan matrix} (\ie $a_{ii}=2$, $a_{ij}\in\Z_{\leqslant 0}$, 
	$i\neq j$, and $a_{ij}=0$ implies $a_{ji}=0$), then $\r$ contains the ideal generated 
	by the Serre relations
	\begin{align}\label{eq:SerrerelgCm}
		\sfad(e_i)^{1-a_{ij}}(e_j)=0=\sfad(f_i)^{1-a_{ij}}(f_j)\qquad i\neq j
	\end{align}
	and coincides with it if $\gcm$ is also symmetrizable \cite{gabber-kac-81}. 
	
	A similar description of $\r$ holds for any \emph{Borcherds--Cartan matrix} $\gcm$ (\ie 
	such that $a_{ij}\in\Z_{\leqslant0}$, $i\neq j$, and $2a_{ij}/a_{ii}\in\Z$ whenever $a_{ii}>0$). 
	In this case, $\g$ is called a Borcherds--Kac--Moody algebra and the corresponding maximal 
	ideal contains the Serre relations
	\begin{align}\label{eq:Serrerel-BKM-1}
		\sfad(e_i)^{1-\frac{2}{a_{ii}}a_{ij}}(e_j)=0=\sfad(f_i)^{1-\frac{2}{a_{ii}}a_{ij}}(f_j)
	\end{align}
	if $a_{ii}>0$ and $i\neq j$, and
	\begin{align}\label{eq:Serrerel-BKM-2}
		[e_i,e_j]=0=[f_i,f_j]
	\end{align}
	if $a_{ii}\leqslant0$ and $i\neq j$.
	Even in this case, if $\gcm$ is symmetrizable, $\r$ is generated by the Serre relations (cf.\ \cite[Corollary~2.6]{borcherds-88}).
\end{remark}

%Lie bialgebra structure

If the matrix $\gcm$ is symmetrizable, the corresponding Kac--Moody 
algebra can be further endowed with a standard Lie bialgebra structure.
Assume henceforth that $\gcm$ is symmetrizable, and fix a realization 
$\rls{\gcm}=(\h,\Pi,\Pi^\vee)$ and an invertible
diagonal matrix $\sfD=\mathsf{diag}(d_i)_{i\in\bfI}$ such that 
$\sfD\sfA$ is symmetric. Let $\h'\subset\h$ be the span of $\{\hcor{i}\}_{i\in\bfI}$, 
and $\h''\subset\h$ a complementary subspace. Then, there is a symmetric, non--degenerate 
bilinear form $\iip{\cdot}{\cdot}$ on $\h$, which is uniquely determined by
$\iip{\hcor{i}}{\cdot}\coloneqq d_i^{-1}\alpha_i(\cdot)$ and $\iip{\h''}{\h''}\coloneqq 0$. 
The form $\iip{\cdot}{\cdot}$ uniquely extends to an invariant symmetric
bilinear form on $\wt{\g}$, and $\iip{e_i}{f_j}=\drc{ij}d_i^{-1}$. The kernel 
of this form is precisely $\r$, so that $\iip{\cdot}{\cdot}$ descends to a 
non--degenerate form on $\g$.\footnote{Since $\iip{\cdot}{\cdot}$ 
	is non--degenerate on $\h$, the kernel $\mathfrak{k}\coloneqq\ker\iip{\cdot}{\cdot}$ is a graded 
	ideal trivially intersecting $\h$ and therefore it is contained in $\r$. Conversely, 
	for any graded ideal $\mathfrak{i}=\bigoplus_{\alpha}\mathfrak{i}_{\alpha}$ trivially
	intersecting $\h$, one has $\mathfrak{i}\subseteq\mathfrak{k}$. More precisely, 
	let $X\in\mathfrak{i}_{\alpha}$, $Y\in\wt{\g}_{\beta}$ and $Z\in\h$ such that $\beta(Z)\neq0$.
	Then,
	\begin{align*}
		\beta(Z)\cdot\iip{X}{Y}=\iip{X}{[Z,Y]}=-\iip{[X,Y]}{Z}=0\ .
	\end{align*}
	In particular $\r\subseteq\mathfrak{k}$ and therefore $\r=\mathfrak{k}$.}

Set $\b_{\pm}\coloneqq\h\oplus\bigoplus_{\alpha\in\sfR_+}\g_{\pm\alpha}\subset\g$. 
One can see easily that the bilinear form induces a canonical isomorphism of graded 
vector spaces $\b_{+}\simeq\b_{-}^{\star}$, where 
$\b_{-}^{\star}\coloneqq\h^\ast\oplus\bigoplus_{\alpha\in\sfR_+}\g_{-\alpha}^\ast$, and, more 
specifically, $\g_{\alpha}\simeq\g_{-\alpha}^\ast$. 

Let $\{e_{\alpha,i}\;|\; i=1,\dots,\dim\g_\alpha\}$ 
and $\{f_{\alpha,i}\;|\; i=1,\dots,\dim\g_\alpha\}$ be bases of $\g_{\alpha}$ and $\g_{-\alpha}$, 
respectively, which are dual to each other with respect to $\iip{\cdot}{\cdot}$, and set
\begin{align*}
	r\coloneqq\sum_{\alpha\in\sfR_+}\sum_{i=1}^{\dim\g_{\alpha}}e_{\alpha,i}\ten f_{\alpha,i}+\sum_{i=1}^{\dim\h}x_i\ten x_i\ ,
\end{align*}
where $\{x_i\;|\; i=1,\dots,\dim\h\}$ is an orthonormal basis of $\h$.

We will show in Section~\ref{ss:km-lba} that $\g$ has a natural structure of Lie bialgebra 
with cobracket  $\delta\colon\g\to\g\wedge\g$ given by
\begin{align*}
	\delta|_{\h}\coloneqq 0\ ,
	\qquad
	\delta(e_i)\coloneqq d_i \hcor{i}\wedge e_i\ ,
	\qquad
	\delta(f_i)\coloneqq d_i \hcor{i}\wedge f_i\ .
\end{align*}
Moreover, it satisfies $\delta(x)=[x\ten1+1\ten x, r]$. 

%-----------------------------------------------------------------------
\subsection{Quasi--triangular Lie bialgebras}\label{ss:lba}
%-----------------------------------------------------------------------

A Lie bialgebra is \emph{quasi--triangular} if there exists a tensor
$r\in\b\ten\b$ such that
\begin{enumerate}\itemsep0.2cm
	\item $\Omega\coloneqq r+r_{21}$ is $\b$--invariant, \ie $[x\ten1+1\ten x,\Omega]=0$
	for any $x\in\b$;
	\item $r$ is a solution of the \emph{classical Yang--Baxter equation}, \ie
	\begin{align}\label{eq:cybe}
		[r_{12},r_{13}]+[r_{12},r_{23}]+[r_{13},r_{23}]=0\ ;
	\end{align}
	\item $\delta_{\b}=\partial r$, \ie for any $x\in\b$, $\delta_{\b}(x)=[x\ten1+1\ten x, r]$.
\end{enumerate} 

It is well--known that any Lie bialgebra $(\b,[\cdot,\cdot]_{\b},\delta_{\b})$ can be canonically 
embedded into a quasi--triangular \emph{topological} Lie bialgebra.
We recall below three versions of this construction, in terms of
\emph{Drinfeld doubles}, \emph{Manin triples} and \emph{matched pairs of Lie algebras}.

%-----------------------------------------------------------------------
\subsubsection{Drinfeld double}\label{sss:drinf-dbl}
%-----------------------------------------------------------------------

Let $(\b,[\cdot,\cdot]_{\b},\delta_{\b})$ be a Lie bialgebra.
The Drinfeld double $\gb$ is defined as follows.
\begin{itemize}[leftmargin=1em]\itemsep0.5cm
	\item 	As a vector space, $\gb=\b\oplus\b^\ast$. The canonical pairing $\iip{\cdot}{\cdot}\colon\b\ten\b^\ast
	\to\bsfld$ extends uniquely to a symmetric non--degenerate bilinear form
	on $\gb$, with respect to which $\b$ and $\b^\ast$ are isotropic. The Lie bracket on $\gb$ is
	defined as the unique bracket which coincides with $[\cdot,\cdot]_{\b}$ on $\b$,
	with $\delta_{\b}^t$ on $\b^\ast$, and is compatible with $\iip{\cdot}{\cdot}$, \ie satisfies
	$\iip{[x,y]}{z}=\iip{x}{[y,z]}$                                    
	for all $x,y,z\in\gb$. The mixed bracket of $x\in\b$ and $\phi\in\b^\ast$ is then 
	given by
	\begin{align*}
		[x,\phi]\coloneqq \sfad^\ast(x)(\phi)-\sfad^\ast(\phi)(x)\ ,
	\end{align*}
	where $\sfad^\ast$ denotes the coadjoint actions of $\b$ on $\b^\ast$ and of $\b^\ast$ on $(\b^\ast)^\ast$.
	\item We endow $\b$ and $\b^\ast$ with the discrete and the weak topology, respectively, and 
	$\gb=\b\oplus\b^\ast$ with the product topology. It is clear that the map $[\cdot,\cdot]_{\b}^t\colon \b^\ast\to
	\b^\ast\wh{\ten}\b^\ast$, where $\wh{\ten}$ denotes the completed tensor product, defines on $\b^\ast$
	a topological cobracket. Similarly, $\delta\coloneqq\delta_{\b}-[\cdot,\cdot]_{\b}^t$ defines a topological
	cobracket on $\gb$, which is easily seen to be compatible with $[\cdot,\cdot]$. 
	Therefore, $(\gb,[\cdot,\cdot], \delta)$ is a topological Lie bialgebra.
	\item Finally, the quasi--triangular structre on $\gb$ is given by the \emph{topological} canonical
	tensor $r\in\b\wh{\ten}\b^\ast\subset\gb\wh{\ten}\gb$ corresponding to the identity under the
	identification $\End(\b)\simeq\b\wh{\ten}\b^\ast$.
\end{itemize}

\begin{remark} 
	If  $\b=\bigoplus_{n\in\N}\b_n$ is $\N$--graded with finite--dimensional homogeneous components, the
	restricted dual $\b^{\star}\coloneqq\bigoplus_{n\in\N}\b_n^\ast$ and the restricted double $\grb=\b\oplus\b^{\star}$
	of $\b$ are also Lie bialgebras, with cobracket $\delta_\b-[\cdot,\cdot]_\b^t$. Moreover,
	$\grb$ is quasi--triangular with respect to the canonical tensor 
	$r\in\b\wh{\ten}\b^\star\coloneqq\prod_{n\in\N}\b_n\ten\b_n^\ast$.
\end{remark} 

%-----------------------------------------------------------------------
\subsubsection{Manin triples}\label{sss:manin-triple}
%-----------------------------------------------------------------------

A Manin triple is the datum of a Lie algebra $\g$ with a non--degenerate invariant symmetric 
bilinear form $\iip{\cdot}{\cdot}$ and two isotropic Lie subalgebras $\b_{\pm}\subset\g$
such that 
\begin{enumerate}\itemsep0.2cm
	\item as a vector space $\g=\b_+\oplus\b_-$; 
	\item the inner product defines an isomorphism $\b_+\to\b_-^\ast$;
	\item the commutator of $\g$ is continuous with respect to the topology 
	obtained by putting the discrete and the weak topologies on $\b_-$
	and $\b_+$  respectively.
\end{enumerate}

Under these assumptions, the commutator on $\b_+\simeq\b_-^\ast$
induces a cobracket $\delta\colon \b_-\to\b_-\otimes\b_-$ which satisfies
the cocycle condition. Therefore, $\b_-$ is canonically endowed with a 
Lie bialgebra structure, while $\b_+$ and $\g$ are, in general, only 
topological Lie bialgebras. Moreover, $\g$ is isomorphic, as a topological
Lie bialgebra, to the Drinfeld double of $\b_-$.

\begin{remark}
	If $\b$ is an $\N$--graded Lie bialgebra with finite--dimensional 
	homogeneous components, one can consider \emph{restricted} Manin triples, where the inner 
	product induces a isomorphism $\b_+\to\b_-^{\star}$.
	In this case, $\b_+$ and $\g$ are both Lie bialgebras and the latter is isomorphic
	to the restricted Drinfeld double of $\b_-$.
\end{remark}

%--------------------------------------------------------------------------
\subsubsection{Matched pairs of Lie algebras}\label{sss:matched-lie}
%--------------------------------------------------------------------------

The last construction is due to S. Majid \cite{majid-book-95} and it is, 
from a certain point of view, the most abstract, since it does not rely on a pairing. 
Two Lie algebras $(\c,[\cdot,\cdot]_{\c})$ and $(\d,[\cdot,\cdot]_{\d})$ form a 
\emph{matched pair} if there are maps
\begin{align*}
	\cond\colon \c\ten\d\to\d\qquad\mbox{and}\qquad\donc\colon \c\ten\d\to\c
\end{align*}
such that
\begin{enumerate}
	\item $\cond$ is a left action of $\c$ on $\d$, \ie
	\begin{align*}
		\cond\circ[\cdot,\cdot]_{\c}\ten\id=\cond\circ\id\ten\cond\circ(\id-(1\,2))\ ,
	\end{align*}
	and $\donc$ is a right action of $\d$ on $\c$, \ie
	\begin{align*}
		\donc\circ\id\ten[\cdot,\cdot]_{\d}=\donc\circ\donc\ten\id\circ(\id-(2\,3))\ ;%grammeq
	\end{align*}
	\item $\donc, \cond$ satisfy the compatibility conditions
	\begin{align*}
		\donc\circ[\cdot,\cdot]_{\c}\ten\id
		&=
		[\cdot,\cdot]_{\c}\circ\donc\ten\id\circ(2\,3)+[\cdot,\cdot]_{\c}\circ\id\ten\donc
		+\donc\circ\id\ten\cond\circ(\id-(1\,2))\ ,\\
		\intertext{and}
		\cond\circ\id\ten[\cdot,\cdot]_{\d}
		&=
		[\cdot,\cdot]_{\d}\circ\cond\ten\id+[\cdot,\cdot]_{\d}\circ\id\ten\cond\circ(1\,2)
		+\cond\circ\donc\ten\id\circ(\id-(2\,3))\ .%grammeq
	\end{align*}
\end{enumerate}

\begin{remark}
	The conditions above are equivalent to the requirement that the
	vector space $\c\oplus\d$ is endowed with a Lie bracket for which
	$\c,\d$ are Lie subalgebras and, for $X\in\c$ and $Y\in\d$,
	\begin{align*}
		[X,Y]_{\dcs}=X\cond Y+X\donc Y\ .
	\end{align*}
	The Lie algebra $\c\dcs\d=(\c\oplus\d,[\cdot,\cdot]_{\dcs})$
	is called the \emph{bicross sum Lie algebra of $\c,\d$}.
\end{remark}

\begin{example}
	If $(\b,[\cdot,\cdot]_{\b},\delta_{\b})$ is a Lie bialgebra, then $(\b,[\cdot,\cdot]_{\b})$
	and $(\b^\ast,\delta_{\b}^t)$ form a matched pair with respect to the coadjoint
	action of $\b$ on $\b^\ast$ and the opposite coadjoint action of $\b^\ast$ on $\b$.
	The corresponding double cross sum Lie algebra $\b\dcs\b^\ast$ is precisely the 
	Drinfeld double of $\a$.
\end{example}

%-----------------------------------------------------------------------
\subsection{Lie bialgebra structure on Kac--Moody algebras}\label{ss:km-lba}
%-----------------------------------------------------------------------

It is well--known that any symmetrisable Kac--Moody algebra 
has a canonical structure of (quotient of) a Manin triple, which 
induces on it a standard Lie bialgebra structure.

Let $\sfA$ be a symmetrisable Borcherds--Cartan matrix and fix an invertible
diagonal matrix $\sfD=\mathsf{diag}(d_i)_{i\in\bfI}$ such that 
$\sfD\sfA$ is symmetric. The bilinear form $\iip{\cdot}{\cdot}$ induces a 
canonical isomorphisms $\b^{\star}_{\pm}\simeq\b_{\mp}$, where $\b^{\star}_{\pm}$ 
denotes the restricted dual. Consider the product Lie algebra $\gtwo=\g\oplus\zh$, with $\zh=\h$, and
endow it with the inner product $\iip{\cdot}{\cdot}-\left.\iip{\cdot}{\cdot}
\right|_{\zh\times \zh}$. Let $\pi_0\colon \g\to\g_0\coloneqq\h$ be the projection, and
$\btwo_\pm\subset\gtwo$ the subalgebras
\begin{align*}
	\btwo_\pm\coloneqq\{(X,h)\in\b_\pm\oplus\zh|\,\pi(X)=\pm h\}\ .
\end{align*}
Note that the projection $\gtwo\to\g$ onto the first component restricts
to an isomorphism $\btwo_\pm\to\b_\pm$ with inverse $\b_\pm\ni X\mapsto
(X,\pm\pi_0(X))\in\btwo_\pm$. The following is straightforward.
\begin{enumerate}\itemsep0.5cm
	% Manin triple
	\item $(\gtwo, \btwo_-, \btwo_+)$ is a restricted Manin triple.
	In particular, $\btwo_\mp$ and $\gtwo$ are Lie bialgebras,
	with cobracket $\delta_{\btwo_\mp}\coloneqq[\cdot,\cdot]_{\btwo_\pm}^t$
	and $\delta_{\gtwo}=\delta_{\btwo_-}-\delta_{\btwo_+}$.
	% quotienting
	\item The central subalgebra $0\oplus\zh\subset\gtwo$ is a coideal,
	so that the projection $\gtwo\to\g$ induces a Lie bialgebra structure
	on $\g$ and $\b_\mp$. 
	% formulas
	\item The Lie bialgebra structure on $\g$ is given by
	\begin{align*}
		\left.\delta\right|_{\h}=0
		\ ,\quad
		\delta(e_i)=d_i \hcor{i}\wedge e_i
		\ ,\quad
		\delta(f_i)=d_i \hcor{i}\wedge f_i\ .
	\end{align*}
\end{enumerate}

%--------------------------------------------------------------------------
\subsection{Kac--Moody algebras by duality}\label{ss:KM-halb}
%--------------------------------------------------------------------------

We recall  an alternative construction of symmetrisable Kac--Moody algebra, provided
by  G. Halbout in terms of matched pairs of Lie bialgebras \cite{halbout-99}. More precisely,
his construction goes as follows.
\begin{itemize}[leftmargin=1em]\itemsep0.5cm
	\item Let $\sfA$ be a symmetrisable Borcherds--Cartan matrix,  $\sfD=\mathsf{diag}(d_i)_{i\in\bfI}$ 
	an invertible diagonal matrix  such that $\sfD\sfA$ is symmetric, and $\iip{\cdot}{\cdot}$ the corresponding 
	non--degenerate bilinear form on $\h$.
	\item Let $\FL{\pm}$ be the free Lie algebras generated by the set $X_{\pm}\coloneqq\{\xpm{i}, \xzpm{}\;|\; i\in\bfI, \xz{}\in\h\}$.
	The assignment
	\begin{align*}
		\delta_{\pm}(\xzpm{})\coloneqq 0
		\quad\mbox{and}\quad
		\delta_{\pm}(\xpm{i})\coloneqq \mp d_i\hcorpm{i}\wedge\xpm{i}
	\end{align*}
	extends uniquely to a cobracket on $\FL{\pm}$ and induces a Lie bialgebra structure on it.
	\item The assignment
	\begin{align*}
		\abf{\xp{i}}{\xm{j}}\coloneqq d_i^{-1}\drc{ij}\ ,\quad \abf{\xzp{}}{\xzm{}}\coloneqq 2\,\iip{\xz{}}{\xz{}}\ ,
		\quad
		\abf{\xp{i}}{\xzm{}}\coloneqq 0\eqqcolon \abf{\xzp{}}{\xm{i}}\ ,
	\end{align*}
	extends uniquely to Lie bialgebra pairing $\abf{\cdot}{\cdot}\colon \FL{+}\ten\FL{-}\to\bsfld$, \ie
	for $X_{\pm}, Y_{\pm}\in\FL{\pm}$, 
	\begin{align}\label{eq:q:lba-pairing}
		\abf{[X_{\pm}, Y_{\pm}]}{X_{\mp}}=\abf{X_{\pm}\ten Y_{\pm}}{\delta_{\mp}(X_{\mp})}\ .
	\end{align}
	%	\begin{align*}
	%		\abf{[X_+, Y_+]}{X_-}=\abf{X_+\ten Y_+}{\delta_-(X_-)}
	%		\quad\mbox{and}\quad
	%		\abf{X_+}{[X_-, Y_-]}=\abf{\delta_+(X_+)}{X_-\ten Y_-}\ .
	%	\end{align*}
	Then, $\FL{+}$ and $\FL{-}$ naturally form a matched pair of Lie bialgebras.
	\footnote{By slight abuse of notation, we impose that $\abf{\cdot}{\cdot}$ is 
		\emph{symmetric}, \ie it can be considered as
		a function on either $\FL{+}\ten\FL{-}$ or $\FL{-}\ten\FL{+}$, regardless of the order.
		Moreover, note that \eqref{eq:q:lba-pairing} can be equivalently restated as
		$\abf{[X_{\pm}, Y_{\pm}]}{X_{\mp}}=\abf{X_{\pm}\wedge Y_{\pm}}{\delta_{\mp}(X_{\mp})}$\ .} 
	
	The pairing $\abf{\cdot}{\cdot}$
	extends to the a possibly degenerate, invariant pairing on the double cross sum Lie bialgebra $\FL{+}\dcs\FL{-}$.
	\item The ideals generated by $[\xzpm{},\xzpm{}]$, $[\xzpm{},\xpm{i}]\mp\alpha_i(\xi)\xpm{i}$, 
	$\sfad(\xpm{i})^{1-\frac{2}{a_{ii}}a_{ij}}(\xpm{j})$ ($i\neq j$ and $a_{ii}>0$), and $[\xpm{i},\xpm{j}]$ ($a_{ii}\leqslant 0$ and $a_{ij}=0$) 
	are orthogonal to $\FL{\mp}$ and are coideals. Let $\mathfrak{s}$ be the sum of these ideals. In particular,  
	$\mathfrak{s}\subseteq\mathfrak{k}=\ker\abf{\cdot}{\cdot}\subseteq\FL{+}\dcs\FL{-}$. 
	\item Finally, one observes that $\FL{+}\dcs\FL{-}/\mathfrak{k}$ has the form $\mathfrak{g}\oplus\zh$, where $\zh$ is a central copy
	of $\h$ and $\g$ is the Borcherds--Kac--Moody algebra associated to $\gcm$.\footnote{Indeed, it is clear that there is a surjective morphism of Lie algebras
		$\pi\colon \g\to\d$, where $\d=\FL{+}\dcs\FL{-}/(\mathfrak{k}\oplus\zh)$, and, since $\ker\pi\subset\g$ is an ideal trivially intersecting
		$\h$, it must be necessarily trivial.} This implies, in particular, that $\mathfrak{k}$ coincides with $\mathsf{s}$ and it is a coideal. 
	Therefore, the Lie bialgebra structure on $\FL{+}\dcs\FL{-}$ naturally descends to $\g$.
\end{itemize} 

%-----------------------------------------------------------------------
\subsection{Drinfeld--Jimbo quantum groups}\label{ss:DJ}
%-----------------------------------------------------------------------

Let $\sfA $ be a symmetrisable Borcherds--Cartan matrix and
fix an invertible diagonal matrix $\mathsf{D}=\mathsf{diag}(d_i)_{i\in\bfI}$ such that 
$\sfD\sfA$ is symmetric. Let $\g=\g(\sfA)$ be the corresponding 
Borcherds--Kac--Moody algebra with its standard Lie bialgebra structure, and set 
$q\coloneqq\exp(\hbar/2)$, $q_i\coloneqq\exp(\hbar/2\cdot d_i)$.
The following is a straightforward generalization to Borcherds--Kac--Moody algebras of
the quantum group defined by Drinfeld \cite{drinfeld-quantum-groups-87} and 
Jimbo \cite{jimbo-85} (cf. also \cite{kang-95}).

The Drinfeld--Jimbo quantum group of $\g$ is the associative
algebra $\DJ{\g}$ topologically generated over $\bsfldh$ by 
$\h$ and the elements $E_i, F_i, i\in\bfI$ satisfying the following defining relations.
\begin{enumerate}\itemsep0.5cm
	\item {\bf Diagonal action:} for $h,h'\in\h$, $i\in\bfI$,
	\begin{align*}
		[h,h']=0\ ,\quad [h,E_i]=\alpha_i(h)E_i\ ,\quad [h,F_i]=-\alpha_i(h)F_i\ .
	\end{align*}
	In particular, for $\qxz{i}{}\coloneqq\exp(\hbar/2\cdot d_ih_i)$,
	it holds $\qxz{i}{}E_j=q_i^{a_{ij}}\cdot E_j\qxz{i}{}$ and $\qxz{i}{}F_j=q_i^{-a_{ij}}\cdot E_j\qxz{i}{}$\ .
	\item {\bf Quantum double relations:} 
	\begin{align*} 
		[E_i, F_i]=\frac{\qxz{i}{}-\qxz{i}{-1}}{q_i-q_i^{-1}}\ ,
	\end{align*}
	\item {\bf Quantum Serre relations:} for $i,j\in\bfI$ with $a_{ij}=0$,
	\begin{align*}
		[E_i, E_j]=0=[F_i,F_j]\ ,
	\end{align*}
	and for $i,j\in\bfI$, $i\neq j$, with $a_{ii}=2$,
	\begin{align*}
		\sum_{m=0}^{1-a_{ij}}\frac{(-1)^m}{[m]_{q_i}! [1-a_{ij}-m]_{q_i}!}X_i^{1-a_{ij}-m}X_jX_i^m=0\qquad (X=E,F)\ .
	\end{align*}
\end{enumerate}

Moreover, $\DJ{\g}$ has a Hopf algebra structure with counit, coproduct and antipode defined,
for $h\in\h$ and $i\in\bfI$, by
\begin{align*}
	\begin{array}{lll}
		\varepsilon(h)=0\ , & \Delta(h)=h\ten1+1\ten h\ ,& S(h)=-h\ ,\\[2pt]
		\varepsilon(E_i)=0\ , & \Delta(E_i)=E_i\ten\qxz{i}{}+1\ten E_i\ ,& S(E_i)=-E_i\qxz{i}{-1}\ ,\\[3pt]
		\varepsilon(F_i)=0\ , & \Delta(F_i)=F_i\ten 1+\qxz{i}{-1}\ten F_i\ ,& S(F_i)=-\qxz{i}{}F_i\ .
	\end{array}
\end{align*}

%One checks easily that the formulae above quantize the Lie bialgebra structure of 
%the underlying Borcherds--Kac--Moody algebra. This extends to the quasi--triangular
%structure. Denote by $\DJ{{\b}_-}$ (resp. $\DJ{{\b}_+}$) the Hopf subalgebra generated by 
%$\h$ and $\{F_i,i\in\bfI\}$ (resp. $\h$ and $\{E_i,i\in\bfI\}$). 

%It is well--known (cf.\ \cite[Section~13]{drinfeld-quantum-groups-87} and \cite[Section~8.3]{chari-pressley})
%that there exists a unique Hopf pairing 
%\begin{align*}
%\iip{\cdot}{\cdot}_{\D}\colon \DJ{{\b_-}}\ten\DJ{{\b}_+}\to\bsfld(\negthinspace(\hbar)\negthinspace)
%\end{align*}
%\ie a non--degenerate
%bilinear form compatible with the Hopf algebra structure, defined on the generators by
%\begin{align*}
%\iip{1}{1}_{\D} = 1\ , \quad 
%\iip{h}{h'}_{\D} =-\frac{1}{\hbar}\iip{h}{h'}\ , \quad
%\iip{F_i}{E_j}_{\D}=\frac{\delta_{ij}}{q-q^{-1}}\ ,
%\end{align*}
%and zero otherwise. 
The following is well--known (cf.\ \cite{drinfeld-quantum-groups-87,lusztig-book-94,chari-pressley}).

\begin{theorem}\hfill
	\begin{enumerate}\itemsep0.3cm
		\item
		The Hopf algebra $\DJ{\g}$ is a quantization of the Lie bialgebra $\g$. 
		\item 
		Denote by $\DJ{{\b_-}}$ (resp. $\DJ{{\b}_+}$) the Hopf subalgebra generated by 
		$\h$ and $\{F_i,i\in\bfI\}$ (resp. $\h$ and $\{E_i,i\in\bfI\}$). 
		Then, $\DJ{{\b_-}}$ (resp. $\DJ{{\b}_+}$) is a quantisation of the Lie bialgebra 
		${\b_-}$ (resp. ${\b}_+$), and there exists a unique non--degenerate Hopf pairing 
		$\iip{\cdot}{\cdot}_{\D}\colon \DJ{{\b}_-}\ten\DJ{{\b}_+}\to\bsfld(\negthinspace(\hbar)\negthinspace)$, \ie a non--degenerate
		bilinear form compatible with the Hopf algebra structure,
		defined on the generators by
		\begin{align*}
			\iip{1}{1}_{\D} \coloneqq 1\ , \quad 
			\iip{h}{h'}_{\D} \coloneqq \frac{1}{\hbar}\iip{h}{h'}\ , \quad
			\iip{F_i}{E_j}_{\D} \coloneqq \frac{\delta_{ij}}{q-q^{-1}}\ ,
		\end{align*}
		and zero otherwise. 
		
		\item The Hopf pairing $\iip{\cdot}{\cdot}_{\D}$ induces an isomorphism of Hopf algebras
		between $\DJ{{\b}_-}$ and $(\DJ{{\b}_+})^{\star}$, which restricts to the identification 
		$\phi\colon \h\to\h^\ast$, $\phi(h)=-2\iip{h}{\cdot}$. Moreover, $\DJ{\g}$ can be realized
		as a quotient of the \emph{restricted quantum double} of $\DJ{{\b}_-}$ with respect
		to this identification, \ie $\D{\DJ{{\b}_-}}/(\h\simeq\h^\ast)\simeq\DJ{\g}$.
		In particular, $\DJ{\g}$ is a quasi--triangular Hopf algebra with $R$--matrix
		\begin{equation}
			\ol{R}=q^{\sum_{i}u_i\ten u_i}\cdot\sum_pX_p\ten X^p,
		\end{equation}
		where $\{u_i\}\subset\h$ is an orthonormal basis with respect to $\iip{\cdot}{\cdot}$,
		$\{X_p\}\subset\DJ{{\n}_-}$, $\{X^p\}\subset\DJ{{\n}_+}$ are dual basis with respect
		to the pairing $\iip{\cdot}{\cdot}_{\D}$.
	\end{enumerate}
\end{theorem}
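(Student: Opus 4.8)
The plan is to reduce the statement to two facts about the Borel halves --- a quantum PBW theorem and the non--degeneracy of the Hopf pairing between them --- after which everything follows formally, via matched pairs and quantum doubles, exactly as in the quantum counterpart of the duality construction of Section~\ref{ss:KM-halb} (cf.\ \cite{lusztig-book-94,appel-toledano-18}). First I would define $\DJ{\b_\pm}$ by the generators $\h,E_i$ (resp.\ $\h,F_i$) and the above relations not involving mixed brackets, equip them with the coproduct, counit and antipode of the statement, and check well--definedness by verifying that the quantum Serre elements lie in $\ker\Delta$: a finite computation with $q_i$--integers, whose $\hbar\to 0$ limit is the classical Serre relations of Remark~\ref{rem:r=s}. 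Flatness (quantum PBW) then follows by a standard deformation argument: one exhibits an ordered spanning family of $\DJ{\b_\pm}$ indexed by the combinatorics of a PBW basis of $\bfU\n_\pm$, proves $\bsfldh$--linear independence by reduction modulo $\hbar$, and concludes that $\DJ{\b_\pm}$ is topologically free over $\bsfldh$ with $\DJ{\b_\pm}/\hbar\DJ{\b_\pm}\simeq\bfU\b_\pm$ and primitive semiclassical coproduct (cf.\ \cite{kang-95} in the Borcherds case).

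Next I would build the Hopf pairing. On the free algebras $T_+,T_-$ on $\{\h,E_i\}$ and $\{\h,F_i\}$ there is a unique bilinear map $\iip{\cdot}{\cdot}_{\D}$ with the stated values on generators and satisfying $\iip{1}{\cdot}=\varepsilon=\iip{\cdot}{1}$, $\iip{xy}{z}=\iip{x\ten y}{\Delta z}$, $\iip{x}{yz}=\iip{\Delta x}{y\ten z}$; it exists because the recursions are consistent on a free algebra. The substance is that the quantum Serre ideal of one Borel half is orthogonal to the entire other free algebra, so that $\iip{\cdot}{\cdot}_{\D}$ descends to a Hopf pairing $\DJ{\b_-}\ten\DJ{\b_+}\to\bsfld(\negthinspace(\hbar)\negthinspace)$; this is again a direct check on the Serre elements, by induction on the root height using the explicit coproducts, mirroring the classical picture of Section~\ref{ss:KM-halb}.

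The hard part is the \emph{non--degeneracy} of $\iip{\cdot}{\cdot}_{\D}$ --- equivalently, that the quantum Serre ideal is the \emph{whole} radical (a quantum Gabber--Kac/Borcherds statement). I would argue semiclassically: grading by the root lattice and clearing the uniform power of $\hbar$ in each homogeneous piece, $\iip{\cdot}{\cdot}_{\D}$ gives a pairing over the ground ring of the topologically free modules $\DJ{\b_\pm}$ whose reduction modulo $\hbar$ is, up to a harmless rescaling of generators and under the identifications above, the classical Lie bialgebra pairing $\abf{\cdot}{\cdot}\colon\b_-\ten\b_+\to\bsfld$ of Section~\ref{ss:KM-halb}. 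By the classical theorem for symmetrisable Borcherds--Cartan matrices (Remark~\ref{rem:r=s}; \cite{gabber-kac-81,borcherds-88}), that pairing is non--degenerate with radical exactly the classical Serre ideal $\r$; a standard $\hbar$--adic (Nakayama--type) argument for complete, topologically free modules lifts this to non--degeneracy over $\bsfld(\negthinspace(\hbar)\negthinspace)$. This yields the isomorphism $\DJ{\b_-}\simeq(\DJ{\b_+})^\star$ and, from non--degeneracy of $\iip{\cdot}{\cdot}$ on $\h$, its restriction $\phi\colon\h\xrightarrow{\ \sim\ }\h^\ast$, $\phi(h)=-2\iip{h}{\cdot}$.

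Finally I would assemble the pieces. Non--degeneracy makes $(\DJ{\b_+},\DJ{\b_-})$ a matched pair of Hopf algebras, with double cross product $\DJ{\b_+}\dcs\DJ{\b_-}$ --- equivalently the restricted quantum double $\D{\DJ{\b_-}}$ via $\DJ{\b_-}\simeq(\DJ{\b_+})^\star$. Comparing the cross relations it imposes with the defining relations of $\DJ{\g}$ --- the decisive one being $[E_i,F_i]=(\qxz{i}{}-\qxz{i}{-1})/(q_i-q_i^{-1})$, precisely the commutator forced by $\iip{F_i}{E_i}_{\D}=1/(q-q^{-1})$ --- identifies $\DJ{\g}$ with the quotient of $\DJ{\b_+}\dcs\DJ{\b_-}$ gluing the two copies of $\h$ along $\phi$. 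From this description one reads off (1)--(3): $\DJ{\g}$ is topologically free over $\bsfldh$, with underlying module $\DJ{\n_-}\wh{\ten}\hext{\h}\wh{\ten}\DJ{\n_+}$ by the two PBW theorems; its classical limit is the analogous quotient of $\bfU\FL{+}\dcs\bfU\FL{-}$, i.e.\ $\bfU\g$ by Section~\ref{ss:KM-halb}; the cobracket identity $\delta(x)=(\Delta(\wt x)-\Delta^{21}(\wt x))/\hbar\bmod\hbar$ is verified on generators using $\qxz{i}{}=1+\tfrac{\hbar}{2}d_i\hcor{i}+O(\hbar^2)$, recovering $\delta(\hcor{i})=0$ and $\delta(e_i)=d_i\hcor{i}\wedge e_i$ as in Section~\ref{ss:km-lba}; and quasi--triangularity with $\ol R=q^{\sum_i u_i\ten u_i}\cdot\sum_p X_p\ten X^p$ is the universal $R$--matrix of the quantum double, its two factors being the canonical tensors of the Cartan pairing and of $\iip{\cdot}{\cdot}_{\D}$ on $\DJ{\n_-}\ten\DJ{\n_+}$.
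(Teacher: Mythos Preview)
The paper does not give a proof of this theorem; it is stated as well--known with references to \cite{drinfeld-quantum-groups-87,lusztig-book-94,chari-pressley}, and the only remark on its proof is the sentence immediately following it, namely that the construction of the Hopf pairing is obtained by the quantum analogue of the duality procedure of Section~\ref{ss:KM-halb} (cf.\ \cite[Part~I]{lusztig-book-94}). Your outline is precisely this Lusztig--style approach and matches what the paper has in mind, with one terminological slip: the quantum Serre elements do not lie in $\ker\Delta$ --- since $(\varepsilon\ten\id)\circ\Delta=\id$, the kernel of $\Delta$ is zero --- rather, the computation shows they are skew--primitive, so the two--sided ideal they generate is a Hopf ideal and $\Delta$ descends.
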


It is useful to notice here that the proof of the theorem and the construction of the 
Hopf pairing $\rbf{\cdot}{\cdot}_{\D}$ is obtained following a quantum analogue of 
the procedure described in Section \ref{ss:KM-halb} (cf.\ \cite[Part I]{lusztig-book-94}).

\bigskip\section{Continuum Kac--Moody algebras}\label{s:cont-km}
%----------------------------------------------------------------------------------------------

In this section, we recall the notion of continuum Kac--Moody algebras 
introduced in \cite{appel-sala-schiffmann-18}, and their realization as 
continuous colimits of Borcherds--Kac--Moody algebras.

%-------------------------------------------------------------------------------------------
\subsection{Vertex space}\label{ss:vertex-space}
%------------------------------------------------------------------------------------------

\begin{definition}\label{def:topological-quiver}
	Let $X$ be a Hausdorff topological space. We say that $X$ is a \emph{vertex space} 
	if for any $x\in X$, there exists a \emph{chart $(U, A, \phi)$ around $x$} such that
	\begin{enumerate}\itemsep0.15cm
		\item $U$ is an open neighborhood of $x$,
		\item $A=\{A_i\}$ is a family of closed subsets $A_i\subseteq U$ containing $x$, 
		such that $U=\bigcup_i\, A_i$, 
		\item $\phi=\{\phi_i\}$ is a family of continuous maps $\phi_i\colon A_i\to \R$ which 
		are homeomorphisms onto open intervals of $\R$, such that if the intersection between 
		$A_i$ and $A_j$ strictly contains the point $x$, then $\phi_i\vert_{A_i\cap A_j}=\phi_j
		\vert_{A_i\cap A_j}$ and $\phi_i\vert_{A_i\cap A_j}$ induces a homeomorphism between 
		$A_i\cap A_j$ and a closed interval of $\R$.
	\end{enumerate}
	We say that $x$ is an \emph{regular point} if the exist a chart such that $A=\{U\}$; while, 
	we say that $x$ is a \emph{critical point} if there exists a chart such that the boundary 
	$ \partial(A_i\cap A_j)$ of $A_i\cap A_j$, as a subset of $U$, contains $x$ for any $i,j$.\footnote{Here, somehow we are following the terminology coming from the theory of persistence modules (cf.\ \cite[Section~2.3]{dufresne_sampling}.}
\end{definition}

\begin{remark}
	Let $x$ be a critical point with a chart $(U, A, \phi)$ such that $x\in \partial(A_i\cap A_j)$ 
	for any $i,j$. Then $x\in \partial A_i$ for any $i$.
\end{remark}

\begin{definition}\label{def:topological-quiver-II}
	Let $X$ be a vertex space and let $\ia$ be a subset of $X$. 
	We say that $\ia$ is an \emph{elementary interval} if there exists a chart $(U, A, \phi)$ for which 
	$J\subset A_i$ for some $i$ and $\phi_i(\ia)$ is a open-closed interval of $\R$.
	A sequence of elementary intervals $(\ia_1 ,\dots, \ia_n)$, $n>0$, is \emph{admissible} if
	\begin{itemize}\itemsep0.2cm
		\item[(a)] $(\ia_1 \cup\cdots \cup \ia_{i})\cap \ia_{i+1}=\emptyset$ and $(\ia_1 \cup\cdots\cup \ia_{i})\cup 
		\ia_{i+1}$ is connected for any $i=1,\dots, n-1$;
		\item[(b)] for any $i=1,\dots, n-1$, there exist $x\in X$ and a chart $(U,A, \phi)$ around $x$ for 
		which $U\supseteq(\ia_1 \cup\cdots \cup \ia_{i})\cup \ia_{i+1}$ and 
		$\big(\negthinspace(\ia_1 \cup\cdots \cup \ia_{i})\cup \ia_{i+1}\big)\cap A_k$ is either empty or an elementary interval 
		for any $k$. 
	\end{itemize}
	An \emph{interval} of $X$ is a subset $\ia$ of the form $\ia_1 \cup\cdots\cup \ia_n$, where $(\ia_1 ,\dots, \ia_n)$ 
	is an admissible sequence of elementary intervals.
	We denote by $\intsf(X)$ the set of all intervals in $X$.
\end{definition}

\begin{example}
	Let $\K=\Q, \R$. Then $\K$ is an example of a vertex space. An interval of $\K$ is a subset $\ia\subset \R$ which is an an open--closed 
	interval of the form $\ia=(a,b]\coloneqq \{x\in \R\;\vert\; a<x\leqslant b\}$ for some $\K$-values $a<b$. 
\end{example}

%-------------------------------------------------------------------------------------------
\subsection{Continuum quivers}\label{ss:topological-quiver}
%------------------------------------------------------------------------------------------

Let $X$ be a vertex space and $\intsf(X)$ the set of all intervals of $X$. Set
\begin{align*}
	\ia\sgpp\ib\coloneqq &
	\begin{cases}
		\ia\cup\ib & \text{if } \ia\cap\ib=\emptyset\text{ and }\ia\cup\ib\in\intsf(X)\ ,\\[4pt]
		\text{n.d.} & \text{otherwise}\ ,
	\end{cases}\\
	\ia\sgpm\ib\coloneqq &
	\begin{cases}
		\ia\setminus\ib & \text{if } \ia\cap\ib=\ib\text{ and }\ia\setminus \ib\in \intsf(X)\ ,\\[4pt]
		\text{n.d.} & \text{otherwise}\ .
	\end{cases}
\end{align*}
We call $\sgpp$ the \emph{sum of intervals}, while we call $\sgpm$ the \emph{difference of intervals}. 

\begin{remark}\label{rem:int-X}
	The elements of $\intsf(X)$ are described as follows
	\cite[Lemma~5.5]{appel-sala-schiffmann-18}.
	\begin{enumerate}\itemsep0.2cm
		\item Every contractible interval is homeomorphic to a finite oriented tree such that any
		vertex is the target of at most one edge.
		\item Every non--contractible interval is homeomorphic to an interval of the form 
		\begin{align*}
			S^1\sgpp \bigoplus_{k=1}^N T_k\coloneqq (\cdots(S^1\sgpp T_1)\sgpp T_2)\cdots \sgpp T_N)
		\end{align*}
		for some pairwise disjoint contractible intervals $T_k$, with $N\geqslant 0$.
	\end{enumerate}
\end{remark}

We denote by $\fun{X}$ the $\Z$-span of the characteristic functions $\cf{\ia} $ for all interval $\ia$ of $X$. 
Note that $\cf{\ia\sgpp\ib}=\cf{\ia} +\cf{\ib} $ for a given $(\ia,\ib)\in \intsf(X)^{(2)}_{\sgpp}$. We call \emph{support} 
of a function $f\in \fun{X}$ the set $\mathsf{supp}(f)\coloneqq \{p\in X\, \vert\, f(p)\neq 0\}$. It is a disjoint 
union of finitely many intervals of $X$.

Define a bilinear form $\abf{\cdot}{\cdot}$ on $\fun{X}$ in the following way. Let $f,g\in\fun{X}$, and 
assume that there exists a point $x$ with a chart $(U, A, \phi)$ for which the supports of $f$ and $g$ 
are contained in $A_i$ for some $i$, then we set
\begin{align}\label{eq:Euler-form}
	\abf{f}{g}\coloneqq \sum_{p\in A_i} f_-(p)(g_-(p)-g_+(p)) \ ,
\end{align}
where $h_{\pm}(x)=\lim_{t\to0^+}h(x\pm t)$.

Since we can always decompose an interval into a sum of elementary subintervals (and we can do similarly 
with supports of functions of $\fun{X}$), we extend $\abf{\cdot}{\cdot}$ with respect to $\sgpp$ by 
imposing the condition that $\abfcf{\ia} {\ib} =0$ for two elementary intervals $\ia, \ib$ for which there does not 
exist a common $A_i$ containing both.

As a consequence of the definition, the bilinear form $\abf{\cdot}{\cdot}$ is compatible with the concatenation 
of intervals, by Remark~\ref{rem:int-X}, it is entirely determined by its values on contractible elements.

\begin{remark}\label{rem:euler-form-identities}
	Thanks to the condition (b) of Definition~\ref{def:topological-quiver-II}, one can easily verify that 
	if $\ib$ is a non--contractible sub--interval of $\ia$, then $\abfcf{\ia} {\ib} =\abfcf{\ia\sgpm\ib}{\ib} $, whenever $\ia\sgpm\ib$ is defined. 
	
	Moreover, whenever $\ia\perp\ib$, \ie $(\ia,\ib)\not\in\intsf(X)^{(2)}_{\sgpp}$ and $\ia\cap\ib=\emptyset$, then $\abfcf{\ia}{\ib} =0$. Note also that 
	\begin{align*}
		\abfcf{\ia} {\ia} =
		\begin{cases}
			1 & \text{if $\ia$ is contractible}\ ,\\
			0 & \text{otherwise}\ .
		\end{cases}
	\end{align*}
\end{remark}

Set 
\begin{align*}
	\rbf{f}{g} \coloneqq \abf{f}{g}+\abf{g}{f}
\end{align*}
for $f,g\in\fun{X}$. Then, if $J,J'\in\intsf(X)$ are contractible, then  
\begin{align*}
	\rbf{\cf{\ia} }{\cf{\ib} }=
	\begin{cases}
		{\phantom{+}}2 & \text{if } \ia=\ib \ ,\\[4pt]
		{\phantom{+}}1 & \text{if } (\ia,\ib)\in\intsf(X)^{(2)}_{\sgpm} \text{ or } (\ib,\ia)\in\intsf(X)^{(2)}_{\sgpm}\ ,\\[4pt]
		{\phantom{+}}0 & \text{if } (\ia,\ib)\not\in\intsf(X)^{(2)}_{\sgpp} \text{ and } \ia\cap\ib=\emptyset, \\[4pt]
		-1 & \text{if } (\ia,\ib)\in\intsf(X)^{(2)}_{\sgpp} \text{ and } \ia\sgpp\ib \text{ is contractible}\ ,\\[4pt]
		-2 & \text{if } (\ia,\ib)\in\intsf(X)^{(2)}_{\sgpp} \text{ and } \ia\sgpp\ib \text{ is non--contractible}\ .
	\end{cases}
\end{align*}
All other cases follow therein. Note in particular that, if $\ia$ is non--contractible, $\rbfcf{\ia}{\ia} =0$.

Henceforth, we set $\abf{\ia} {\ib} \coloneqq\abfcf{\ia} {\ib} $ and $\rbf{\ia}{\ib}\coloneqq \rbf{\cf{\ia} }{\cf{\ib} }$. 
It follows immediately from Remark~\ref{rem:euler-form-identities} that 
\begin{align*}
	\rbf{\alpha}{\alpha}=
	\begin{cases}
		2 & \text{if } \ia \text{ is contractible}\ ,\\
		0 &  \text{if } \ia \text{ is non--contractible}\ .
	\end{cases}
\end{align*}
Therefore, we will use \emph{real} (resp. \emph{imaginary}) as a synonym of contractible (resp. non--contractible) in analogy with the terminology used for the roots of a Kac--Moody algebra.

Finally, we give the following:
\begin{definition}
	Let $X$ be a space of vertices. The \emph{continuum quiver of $X$} is the 
	datum $\calQ_X\coloneqq(\intsf(X), \sgpp,\sgpm,\abf{\cdot}{\cdot}, \rbf{\cdot}{\cdot})$.
\end{definition}

%-------------------------------------------------------------------------------------------
\subsection{Continuum Kac--Moody algebras}\label{ss:cont-km}
%-------------------------------------------------------------------------------------------

It is well--known that the set $\sfR_+$ of positive roots of a Kac--Moody algebra $\g$ 
has a standard structure of \emph{partial semigroup}, induced by its embedding in the
root lattice $\sfQ_+$, and that, as Lie bialgebras, the positive and negative Borel subalgebras $\b_{\pm}$
are graded over $\sfR_+$ (cf.\ \cite[Sec. 8]{appel-toledano-15}). Roughly speaking,  
continuum Kac--Moody algebras are obtained by replacing the semigroup of the positive 
roots with the continuum quiver $\calQ_X$. Namely, to any continuum quiver $\calQ_X$, we 
associate a Lie algebra $\g_X $, whose definition mimics the construction of Kac--Moody algebras. 
Let $\wt{\g}_X$ be the Lie algebra over $\C$, freely generated by $\fun{X}$ and the elements 
$\xpm{\ia} $, $\ia\in\intsf(X)$, 
subject to the relations:
\begin{align*}
	[\xz{\ia},\xz{\ib} ]=0\ ,\qquad
	[\xz{\ia} ,\xpm{\ib} ]=\pm\rbf{\ia}{\ib}\cdot \xpm{\ib} \ ,
\end{align*}
\begin{align*}
	[\xp{\ia} ,\xm{\ib} ]=\drc{\ia\ib}\xz{\ia} +(-1)^{\abf{\ia} {\ib} }\cdot \rbf{\ia}{\ib}\cdot (\xp{\ia\sgpm \ib}-\xm{\ib \sgpm \ia})\ .
\end{align*}
where $\xz{\ia} \coloneqq\cf{\ia}$.

Note that the relation $\xz{\ia\sgpp\ib}=\drc{\ia\sgpp\ib}\left(\xz{\ia} +\xz{\ib}\right)$ holds by definition. Set 
\begin{align*}
	\fun{X}^+\coloneqq\mathsf{span}_{\Z_{\geqslant0}}\{\cf{\ia} \;|\;\ia\in\intsf(X)\}\ .
\end{align*}
There is a natural $\fun{X}$--gradation on $\wt{\g}_X$ given by $\deg(\xpm{\ia})=\pm\cf{\ia}$
and $\deg(\xz{\ia})=0$, inducing a triangular decomposition
\begin{align*}
	\wt{\g}_X=\left(\bigoplus_{\phi\in\fun{X}^+}\wt{\g}_{+\phi}\right)\oplus\fun{X}\oplus\left(\bigoplus_{\phi\in\fun{X}^+}\wt{\g}_{-\phi}\right)\ .
\end{align*}
where $\wt{\g}_{\pm\phi}$ denotes the homogeneous subspace of degree $\pm\phi$.

It is important to observe that the bilinear form $\rbf{\cdot}{\cdot}$ on $\fun{X}$ is non--degenerate unless $X=S^1$,
in which case, $\ker\rbf{\cdot}{\cdot}=\Z\cdot\cf{S^1}$. Therefore, whenever $X\neq S^1$, the homogeneous spaces 
$\wt{\g}_{\pm\phi}$ coincide with weight spaces corresponding to the diagonal action of $\fun{X}$. That is, we have
\begin{align*}
	\wt{\g}_{\pm\phi}=\{x\in\wt{\g}_X\;|\; \forall\psi\in\fun{X}\;\vert\; [\psi, x]=\pm\rbf{\phi}{\psi}\cdot x\}\ ,
\end{align*}
for $\phi\in\fun{X}^+$. 

\begin{definition}
	The \emph{continuum Kac--Moody algebra of $\calQ_X$} is the Lie algebra $\g_X \coloneqq\wt{\g}_X/\r_X$, where $\r_X\subset\wt{\g}_X$ is 
	the sum of all two--sided graded ideals with trivial intersection with $\fun{X}$.
\end{definition}

In particular, $\g_X $ has a triangular decomposition
\begin{align}\label{eq:triangular-dec}
	\g_X =\n_+\oplus\h\oplus\n_-\ ,
\end{align}
where $\h=\fun{X}$ and $\n_{\pm}$ are the Lie subalgebras generated, respectively, by 
the elements $\xpm{\ia}$, $\ia\in\intsf(X)$.

The main result of \cite{appel-sala-schiffmann-18} is a generalization to the case of $\g_X $ of the results of Gabber--Kac \cite{gabber-kac-81} and 
Borcherds \cite{borcherds-88}, showing that the ideal $\r_X$ is generated by the Serre relations. In particular, this gives an explicit description of 
the Lie algebra $\g_X $ as follows. 

\begin{definition}
	Let $\serre{X}$ be the set of all pairs $(\ia,\ib)\in\intsf(X)\times\intsf(X)$ such that one of the
	following occurs:
	\begin{itemize}\itemsep0.3cm
		\item
		$\ia$ is contractible, does not contain any critical point of $\beta$, and, for subintervals $\ia'\subseteq \ia$ and $\ib'\subseteq\ib$ with $\rbf{\ib}{\ib'}\neq0$ whenever $\ib'\neq\ib$, $\ia'\sgpp\ib'$ is either undefined or 
		non--homeomorphic to $S^1$;
		\item $\ia\perp\ib$, \ie $\ia\sgpp\ib$ does not exist and $\ia\cap\ib=\emptyset$.
	\end{itemize}
\end{definition}

\begin{ex}
	One has $\serre{\R}=\intsf(\R)\times \intsf(\R)$ and $\serre{S^1}=\big(\intsf(S^1)\setminus \{S^1\}\big)\times \intsf(S^1)$. \hfill $\triangle$
\end{ex}

Set 
\begin{align}\label{eq:ca-cb}
	\ca{\ia}{\ib}\coloneqq(-1)^{\abf{{\ia} }{{\ib} }}\cdot\rbf{\ia}{\ib}\quad
	\text{and}\quad \cb{\ia}{\ib}\coloneqq\ca{\ia}{,\, \ia\sgpp\ib}	\ .
\end{align}
Note that, if $\ia\sgpm\ib$ or $\ib\sgpm\ia$ are defined, then $\ca{\ia}{\ib}\in\{0,\pm1\}$, 
and, if $\ia\sgpp\ib$ is defined and $(\ia,\ib)\in\serre{X}$, then $\cb{\ia}{\ib}\in\{\pm1\}$.

\begin{theorem}[{cf.\ \cite[Theorem~5.16]{appel-sala-schiffmann-18}}]\label{thm:ass-18}
	The continuum Kac--Moody algebra $\g_X $ is freely generated by the abelian Lie algebra $\fun{X}$ and 
	the elements $\xpm{\ia} $, $\ia\in \intsf(X)$, subject to the following defining relations: 
	\begin{enumerate}\itemsep0.2cm
		\item {\bf Diagonal action:} for $\ia, \ib\in\intsf(X)$, 
		\begin{align*}
			[\xz{\ia} , \xpm{\ib} ] =\pm \rbf{\ia} {\ib} \cdot\, \xpm{\ib} \ ;
		\end{align*}
		\item {\bf Double relations:} for $\ia, \ib\in\intsf(X)$, 
		\begin{align*}
			[\xp{\ia} ,\xm{\ib} ]=\drc{\ia\ib}\, \xz{\ia} +\ca{\ia}{\ib}\cdot\left(\xp{\ia\sgpm\ib}-\xm{\ib\sgpm\ia}\right) \ ;
		\end{align*} 
		\item {\bf Serre relations:} for $(\ia,\ib)\in \serre{X}$,
		\begin{align*}
			[\xpm{\ia} , \xpm{\ib} ]=  \pm\cb{\ia}{\ib}\cdot \xpm{\ia\sgpp\ib} \ .
		\end{align*}
	\end{enumerate}
\end{theorem}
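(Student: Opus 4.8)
The plan is to adapt the strategy of Gabber--Kac \cite{gabber-kac-81} and Borcherds \cite{borcherds-88} to the continuum combinatorics. Write $\mathfrak{s}_X\subset\wt{\g}_X$ for the two--sided ideal generated by the Serre elements $[\xpm{\ia},\xpm{\ib}]\mp\cb{\ia}{\ib}\cdot\xpm{\ia\sgpp\ib}$, $(\ia,\ib)\in\serre{X}$, with the convention $\xpm{\ia\sgpp\ib}=0$ when $\ia\sgpp\ib$ is undefined. Since $\wt{\g}_X$ already encodes the diagonal action and the double relations, and $\g_X=\wt{\g}_X/\r_X$, the theorem is equivalent to the single equality $\r_X=\mathfrak{s}_X$. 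For the inclusion $\mathfrak{s}_X\subseteq\r_X$, I would first show that $\mathfrak{s}_X=\mathfrak{s}_+\oplus\mathfrak{s}_-$ with $\mathfrak{s}_\pm\subset\wt{\n}_\pm$: each Serre element is $\fun{X}$--homogeneous of nonzero degree $\pm(\cf{\ia}+\cf{\ib})$, so the ideal it generates is graded, and by the Jacobi identity it suffices to check that $[\xm{\ic},s]\in\mathfrak{s}_+$ for every generator $\xm{\ic}$ and every positive Serre element $s$ (and the mirror statement) --- a finite computation using only the diagonal action, the double relations, and the identities of Remark~\ref{rem:euler-form-identities}. Then $\mathfrak{s}_\pm$ lies in strictly positive/negative degree, so $\mathfrak{s}_X\cap\fun{X}=0$ and $\mathfrak{s}_X\subseteq\r_X$ by maximality.

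The reverse inclusion $\r_X\subseteq\mathfrak{s}_X$ is the substantial part. Set $\ol{\g}\coloneqq\wt{\g}_X/\mathfrak{s}_X$, with induced triangular decomposition $\ol{\n}_+\oplus\fun{X}\oplus\ol{\n}_-$, and extend the Euler form on $\fun{X}$ to an invariant symmetric bilinear form $\rbf{\cdot}{\cdot}$ on $\ol{\g}$ by $\rbf{\xpm{\ia}}{\xz{\ib}}\coloneqq 0$, $\rbf{\xpm{\ia}}{\xpm{\ib}}\coloneqq 0$, $\rbf{\xp{\ia}}{\xm{\ib}}\coloneqq\drc{\ia\ib}$; well--definedness and invariance is a lengthy but routine induction driven by the double relations. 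By the standard computation recalled in Section~\ref{ss:km} --- for a graded ideal $\mathfrak{i}$, $X\in\mathfrak{i}$, and homogeneous $Y,Z$ with $[Z,Y]\neq 0$, one has $\rbf{X}{[Z,Y]}=-\rbf{[X,Y]}{Z}=0$ --- every graded ideal of $\ol{\g}$ with trivial intersection with $\fun{X}$ lies in the radical $\mathfrak{k}$ of $\rbf{\cdot}{\cdot}$; conversely $\mathfrak{k}$ is itself such an ideal, since $\rbf{\cdot}{\cdot}$ is nondegenerate on $\fun{X}$ when $X\neq S^1$ and, for $X=S^1$, after passing to the central extension $\gtwo=\ol{\g}\oplus\zh$ as in Section~\ref{ss:km-lba}. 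Hence $\r_X/\mathfrak{s}_X=\mathfrak{k}$, and it remains to prove that $\rbf{\cdot}{\cdot}$ is nondegenerate on $\ol{\g}$, i.e.\ that the pairing $\ol{\g}_{\phi}\times\ol{\g}_{-\phi}\to\C$ is perfect for every $\phi\in\fun{X}^+$.

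To prove this nondegeneracy I would localize to finite configurations --- the algebraic heart of the colimit realization $\g_X\simeq\colim_{\calJ}\g_{\calJ}^{\scsop{BKM}}$. Given $\phi\in\fun{X}^+$ and $0\neq x\in\ol{\g}_{\phi}$, only finitely many intervals meet the support of $\phi$, and one may refine to an \emph{irreducible} finite set $\calJ$ of intervals with $\phi\in\bigoplus_{\ia\in\calJ}\Z_{\geqslant 0}\cf{\ia}$ and $x$ in the subalgebra $\ol{\g}_{\calJ}\subseteq\ol{\g}$ generated by $\{\xz{\ia},\xpm{\ia}\;|\;\ia\in\calJ\}$. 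The matrix $\sfA_{\calJ}=(\rbf{\ia}{\ib})_{\ia,\ib\in\calJ}$ is a \emph{symmetric} Borcherds--Cartan matrix, so by Borcherds' theorem (Remark~\ref{rem:r=s}, \cite{borcherds-88}) the algebra $\g_{\calJ}^{\scsop{BKM}}=\g(\sfA_{\calJ})'$ is presented by the Chevalley relations \eqref{eq:km-rel} and the Serre relations \eqref{eq:Serrerel-BKM-1}--\eqref{eq:Serrerel-BKM-2}, and carries a nondegenerate invariant form. The key point is to verify that the continuum relations, restricted to $\calJ$, force the Chevalley and Serre relations for $\sfA_{\calJ}$, producing a surjection $\psi\colon\g_{\calJ}^{\scsop{BKM}}\twoheadrightarrow\ol{\g}_{\calJ}$. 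Granting this, the Borcherds form on $\g_{\calJ}^{\scsop{BKM}}$ agrees with $\psi^{*}\rbf{\cdot}{\cdot}$ on Chevalley generators, hence everywhere by invariance; choosing $\hat{x}$ with $\psi(\hat{x})=x$ and, by nondegeneracy downstairs, some $\hat{y}$ with $\rbf{\hat{x}}{\hat{y}}\neq 0$, one gets $\rbf{x}{\psi(\hat{y})}\neq 0$ with $\psi(\hat{y})\in\ol{\g}_{-\phi}$. As $\phi$ and $x$ were arbitrary, $\mathfrak{k}=0$ and $\r_X=\mathfrak{s}_X$, giving the presentation.

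The main obstacle is exactly the verification in the last step that $\serre{X}$ --- with its intricate definition (avoidance of critical points of $\ib$; the condition on subintervals $\ia'\subseteq\ia$, $\ib'\subseteq\ib$ with $\rbf{\ib}{\ib'}\neq 0$; the exclusion of concatenations homeomorphic to $S^1$) --- reproduces \emph{exactly} the Borcherds presentation on each irreducible $\calJ$. One must check that the double relations collapse to $[\xp{\ia},\xm{\ib}]=\drc{\ia\ib}\xz{\ia}$ on $\calJ$, using that $\ca{\ia}{\ib}=0$ in the surviving cases (in particular $\rbf{S^1}{\ib}=0$), and that the higher Serre relations $\sfad(\xpm{\ia})^{1-\rbf{\ia}{\ib}}(\xpm{\ib})=0$ come out by iterating the quadratic continuum Serre relations for pairs such as $(\ia,\ia\sgpp\ib)$ together with the vanishing convention --- the subtle case being that of the loop (circle) vertices, where $\rbf{\cdot}{\cdot}$ drops rank and the corresponding generators span a Heisenberg--type subalgebra rather than satisfying Serre relations. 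Finally, one must organize these finite checks compatibly as $\calJ$ ranges over the directed family of irreducible refinements of the support of $\phi$. By contrast, the invariance of $\rbf{\cdot}{\cdot}$ and the Jacobi argument for $\mathfrak{s}_X\subseteq\r_X$ are routine, if tedious.
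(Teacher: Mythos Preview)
Your proposal is correct and follows essentially the same strategy as the one the paper attributes to \cite{appel-sala-schiffmann-18}: the paper does not reprove Theorem~\ref{thm:ass-18} here but explicitly records (Section~\ref{ss:bkm-and-sgp}) that the key ingredient is the realization of $\g_X$ as a colimit of Borcherds--Kac--Moody algebras over irreducible finite sets $\calJ$, which is exactly how you establish nondegeneracy of the form on $\ol{\g}=\wt{\g}_X/\mathfrak{s}_X$. Your identification of $\r_X/\mathfrak{s}_X$ with the radical of the invariant form, followed by the local reduction to $\g_{\calJ}^{\scsop{BKM}}$ where Borcherds' theorem applies, is precisely the Gabber--Kac/Borcherds argument the paper invokes.

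Two small points of care. First, the sentence ``only finitely many intervals meet the support of $\phi$'' is not literally true (uncountably many intervals meet any nonempty open set); what you need --- and what you in fact use --- is that any fixed $x\in\ol{\g}_\phi$ is a finite Lie polynomial in finitely many generators $\xpm{\ia}$, and that this finite family can be refined to an irreducible $\calJ$; the paper guarantees this refinement as part of Corollary~\ref{cor:bkm-sgp}. Second, in checking that the continuum double relation collapses to $[\xp{\ia},\xm{\ib}]=\drc{\ia\ib}\xz{\ia}$ on an irreducible $\calJ$, case (c) ($\ia\simeq S^1$, $\ib\subset\ia$) does \emph{not} kill the terms $\xp{\ia\sgpm\ib}$, $\xm{\ib\sgpm\ia}$ by undefinedness --- rather, the coefficient $\ca{\ia}{\ib}=(-1)^{\abf{\ia}{\ib}}\rbf{\ia}{\ib}$ vanishes because $\rbf{S^1}{\ib}=0$; you note this but it is worth making explicit. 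With these adjustments the argument is complete and matches the paper's account.
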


\begin{remark}\label{rem:serre-TQ}
	If $\ib\simeq S^1$ and $\ia\subseteq\ib$, then $(\ia,\ib)\in \serre{X}$. Hence, by (2) above $[\xpm{\ia} , \xpm{\ib} ]=0$.
\end{remark}

%-------------------------------------------------------------------------------------------
\subsection{Colimit realization}\label{ss:bkm-and-sgp}
%-------------------------------------------------------------------------------------------

One fundamental ingredient in the proof of Theorem~\ref{thm:ass-18} is the relation
between $\g_X $ and certain Borcherds--Kac--Moody algebras naturally arising from families
of intervals.
Let $\calJ=\{\ia_k\}_{k}$ be a finite set of intervals $\ia_k\in\intsf(X)$. We say
that $\calJ$ is \emph{irreducible} if the following conditions hold: 
\begin{enumerate}\itemsep0.2cm
	\item every interval is either contractible or homeomorphic to $S^1$; 
	\item given two intervals $\ia,\ib\in\calJ$, $\ia\neq\ib$, one of the following mutually exclusive cases occurs:
	\begin{itemize}\itemsep0.2cm
		\item[(a)] $\ia\sgpp\ib$ exists;
		\item[(b)] $\ia\sgpp\ib$ does not exist and $\ia\cap\ib=\emptyset$;
		\item[(c)] $\ia\simeq S^1$ and $\ib\subset\ia$\ .
	\end{itemize}
\end{enumerate}
Assume henceforth that $\calJ$ is an irreducible set of intervals.
Let $\sfA_{\calJ}$ be the matrix given by the values of $\rbf{\cdot}{\cdot}$ on $\calJ$, 
\ie $\big(\sfA_{\calJ}\big)_{\ia\ib}=\rbf{\ia}{\ib}$ for $\ia,\ib\in \calJ$. Note that the diagonal
entries of $\sfA_{\calJ}$ are either $2$ or $0$, while the only possible
off--diagonal entries are $0,-1,-2$. Let $\calQ_{\calJ}$ be the corresponding quiver with Cartan matrix $\sfA_{\calJ}$. 
Note that a contractible elementary interval in $\calJ$ corresponds to a vertex of $\calQ_{\calJ}$ without 
loops at it. For example, we obtain the following quivers.

\begin{align*}
	\begin{array}{|c|c|c|}
		\hline
		\text{Configuration of intervals} & \text{Borcherds--Cartan diagram}\\
		\hline &\\
		%Configuration #1
		\begin{tikzpicture}[scale=.35]
			\begin{scope}[on background layer]
				\draw [white] (0,0) rectangle (15,10);
			\end{scope}
			%zero-cells
			\coordinate (C1) at (3, 5);
			\coordinate (C2) at (6, 5);
			\coordinate (C3) at (9, 5);
			\coordinate (C4) at (12, 7.5);
			\coordinate (C5) at (12, 2.5);
			%control points
			\coordinate (CON1) at (10,5);
			\coordinate (CON2) at (12,5);
			% oriented one-cells
			\draw [->,very thick, purple]	(C1) -- (C2);
			\draw [->,very thick, blue]		 (C2) -- (C3);
			\draw [->,very thick, yellow]      (C3) arc (270:360:3);
			%\draw [->,very thick, yellow]   (C3) .. controls (CON1) and (CON2) .. (C4);
			\draw [->,very thick, orange]      (C3) arc (90:0:3);
			%\draw [->,very thick, orange]  (C3) .. controls (CON1) and (CON2) .. (C5);
			%labels
			\node at (4.5, 6) {$\ia_1 $};
			\node at (7.5, 6) {$\ia_2 $};
			\node at (13, 6.5) {$\ia_3$};
			\node at (13, 3.5) {$\ia_4$};
		\end{tikzpicture}
		&
		%Quiver #1
		\begin{tikzpicture}[scale=.35]
			\begin{scope}[on background layer]
				\draw [white] (0,0) rectangle (15,10);
			\end{scope}
			%vertices
			\node (V1) at (3.5,5)      [circle,draw=purple,fill=purple, inner sep=3pt]    {};
			\node (V2) at (7.5,5)      [circle,draw=blue,fill=blue, inner sep=3pt]          {};
			\node (V3) at (10.5,7.5) [circle,draw=yellow,fill=yellow, inner sep=3pt]   {};
			\node (V4) at (10.5,2.5) [circle,draw=orange,fill=orange, inner sep=3pt]  {};
			%edges
			\draw [->, very thick] (V1) -- (V2);  
			\draw [<-, very thick] (V3) -- (V2);
			\draw [<-, very thick] (V4) -- (V2);
			%label
			\node at (3.5, 6)    {$\alpha_1$};
			\node at (7.5, 6)    {$\alpha_2$};
			\node at (11.5, 7.5)  {$\alpha_3$};
			\node at (11.5, 2.5)  {$\alpha_4$};
		\end{tikzpicture}
		\\
%		\hline
%	\end{array}
%\end{align*}
%\vspace{-0.5cm}
%\begin{align*}
%	\begin{array}{|c|c|}
		\hline &\\
		%Configuration #2
		\begin{tikzpicture}[scale=.35]
			\begin{scope}[on background layer]
				\draw [white] (0,0) rectangle (15,10);
			\end{scope}
			%one-cells
			\draw [->, very thick, purple] (7.5,7.5) arc (90:270:2.5);
			\draw [->, very thick, blue] (7.5,2.5) arc (-90:90:2.5);
			%labels
			\node at (4, 5)    {$\ia_1 $};
			\node at (11, 5)    {$\ia_2 $};
		\end{tikzpicture}
		&
		%Quiver #2
		\begin{tikzpicture}[scale=.35]
			\begin{scope}[on background layer]
				\draw [white] (0,0) rectangle (15,10);
			\end{scope}
			%vertices
			\node (V1) at (5,5)      [circle,draw=purple,fill=purple, inner sep=3pt]    {};
			\node (V2) at (10,5)      [circle,draw=blue,fill=blue, inner sep=3pt]          {};
			\coordinate (CON1) at (6.5, 7.5);
			\coordinate (CON2) at (8.5, 7.5);
			\coordinate (CON3) at (6.5, 2.5);
			\coordinate (CON4) at (8.5, 2.5);
			%edges
			\draw [->, very thick] (V1) .. controls (CON1) and (CON2) .. (V2);  
			\draw [<-, very thick] (V1) .. controls (CON3) and (CON4) .. (V2);  
			%label
			\node at (5, 6)    {$\alpha_1$};
			\node at (10.2, 6)    {$\alpha_2$};
		\end{tikzpicture}\\
		\hline
		%\end{array}
		%\end{align*}
		%\vspace{-1.85cm}
		%\begin{align*}
		%\begin{array}{|c|c|}
		%\hline &\\
		%Configuration #3
		\begin{tikzpicture}[scale=.35]
			\begin{scope}[on background layer]
				\draw [white] (0,0) rectangle (15,11);
			\end{scope}
			%one-cells
			\draw [->, very thick, purple] (7.5,7.5) arc (90:180:2.5);
			\draw [->, very thick, yellow] (5,5) arc (180:270:2.5);
			\draw [->, very thick, blue] (7.5,2.5) arc (270:360:2.5);
			\draw [->, very thick, orange] (10,5) arc (0:90:2.5);
			\draw [<-, very thick, green] (2.5,2.5) arc (270:360:2.5);
			\draw [->, very thick, red] (10,5) arc (180:90:2.5);
			%labels
			\node at (3.5, 3.5)      {$\ia_1 $};
			\node at (5.5, 7.5)    {$\ia_2 $};
			\node at (5.5, 2.5)    {$\ia_3$};
			\node at (9.5, 7.5)    {$\ia_4$};
			\node at (9.5, 2.5)    {$\ia_5$};
			\node at (11.5, 6.5)      {$\ia_6$};
		\end{tikzpicture}
		&
		%Quiver #3
		\begin{tikzpicture}[scale=.35]
			\begin{scope}[on background layer]
				\draw [white] (0,0) rectangle (15,10);
			\end{scope}
			%vertices
			\node (V1) at (2.5,5)        [circle,draw=green,fill=green, inner sep=3pt]    {};
			\node (V2) at (5,5)      [circle,draw=purple,fill=purple, inner sep=3pt]          {};
			\node (V3) at (7.5,2.5)      [circle,draw=yellow,fill=yellow, inner sep=3pt]          {};
			\node (V4) at (7.5,7.5)      [circle,draw=orange,fill=orange, inner sep=3pt]          {};
			\node (V5) at (10,5)      [circle,draw=blue,fill=blue, inner sep=3pt]          {};
			\node (V6) at (12.5,5)      [circle,draw=red,fill=red, inner sep=3pt]          {};
			%label
			\node at (2.5, 6)    {$\alpha_1$};
			\node at (4.8, 6)    {$\alpha_2$};
			\node at (7.5, 1.5)    {$\alpha_3$};
			\node at (7.5, 8.5)    {$\alpha_4$};
			\node at (10, 6)    {$\alpha_5$};
			\node at (12.5, 6)    {$\alpha_6$};
			%edges
			\draw [<-, very thick] (V1) -- (V2);  
			\draw [->, very thick] (V2) -- (V3);
			\draw [->, very thick] (V3) -- (V5);
			\draw [->, very thick] (V5) -- (V4);
			\draw [->, very thick] (V4)-- (V2);  
			\draw [->, very thick] (V5) -- (V6);
		\end{tikzpicture}
		\\
		\hline
	\end{array}
\end{align*}

Instead, an interval of $\calJ$ homeomorphic to $S^1$ corresponds in $\calQ_{\calJ}$ to 
a vertex having exactly one loop at it, as in the following examples.

\begin{align*}
	\begin{array}{|c|c|}
		\hline
		\text{Configuration of intervals} & \text{Borcherds--Cartan diagram}\\
		\hline &\\
		%Configuration #4
		\begin{tikzpicture}[scale=.35]
			\begin{scope}[on background layer]
				\draw [white] (0,0) rectangle (15,10);
			\end{scope}
			%one-cells
			\draw [->, very thick, purple] (10,5) arc (0:360:2.5);
			\draw [->, very thick, blue] (7.5,7.5) arc (90:180:2.5);
			\draw [<-, very thick, green] (2.5,2.5) arc (270:360:2.5);
			\node at (3.5, 3.5)      {$\ia_1 $};
			\node at (5.5, 7.5)    {$\ia_2 $};
			\node at (11, 5)    {$\ia_3$};
		\end{tikzpicture}
		&
		%Quiver #4
		\begin{tikzpicture}[scale=.35]
			\begin{scope}[on background layer]
				\draw [white] (0,0) rectangle (15,10);
			\end{scope}
			%vertices
			\node (V2) at (7.5, 7.5)  [circle,draw=blue,fill=blue, inner sep=3pt]    {};
			\node (V1) at (5, 5)  [circle,draw=green,fill=green, inner sep=3pt]    {};
			\node (V3) at (10, 5)  [circle,draw=purple,fill=purple, inner sep=3pt]    {};
			%edges
			\draw [->, very thick] (V2) -- (V1);   
			\draw [->, very thick] (V3) -- (V1);
			\draw [->, very thick] (13,5) arc (0:360:1.5);  
			\node at (10, 5)  [circle,draw=purple,fill=purple, inner sep=3pt]    {};
			%labels
			\node at (4.8, 6)    {$\alpha_1$};
			\node at (7.5, 8.5)    {$\alpha_2$};
			\node at (9.5, 6)    {$\alpha_3$};
		\end{tikzpicture}\\
		\text{($\ia_3$ is a complete circle)} & \\
		\hline
	\end{array}
\end{align*}
\vspace{-0.5cm}
\begin{align*}
	\begin{array}{|c|c|}
		\hline &\\
		%Configuration #5
		\begin{tikzpicture}[scale=.35]
			\begin{scope}[on background layer]
				\draw [white] (0,0) rectangle (15,10);
			\end{scope}
			%one-cells
			\draw [->, very thick, purple] (12.5,5) arc (0:360:2.5);
			\draw [<-, very thick, green] (2.5,5) arc (180:540:2.5);
			\node at (5, 8.5)      {$\ia_1 $};
			\node at (10, 8.5)      {$\ia_2 $};
		\end{tikzpicture}
		&
		%Quiver #5
		\begin{tikzpicture}[scale=.35]
			\begin{scope}[on background layer]
				\draw [white] (0,0) rectangle (15,10);
			\end{scope}
			%vertices
			\node (V1) at (5, 5)  [circle,draw=green,fill=green, inner sep=3pt]    {};
			\node (V3) at (10, 5)  [circle,draw=purple,fill=purple, inner sep=3pt]    {};
			\coordinate (CON1) at (6.5, 7.5);
			\coordinate (CON2) at (8.5, 7.5);
			\coordinate (CON3) at (6.5, 2.5);
			\coordinate (CON4) at (8.5, 2.5);
			%edges
			\draw [->, very thick] (V1) .. controls (CON1) and (CON2) .. (V3);  
			\draw [<-, very thick] (V1) .. controls (CON3) and (CON4) .. (V3);  
			\draw [->, very thick] (13,5) arc (0:360:1.5);  
			\draw [->, very thick] (2,5) arc (180:540:1.5);  
			\node at (V3) [circle,draw=purple,fill=purple, inner sep=3pt]    {};
			\node at (V1) [circle,draw=green,fill=green, inner sep=3pt]    {};
			%labels
			\node at (5, 6.5)    {$\alpha_1$};
			\node at (10, 6.5)    {$\alpha_2$};
		\end{tikzpicture}\\
		\text{($\ia_2 $ is a complete circle)} & \\
		\hline
	\end{array}
\end{align*}

To any irreducible set of intervals $\calJ$, we can associate two Lie algebras:
\begin{enumerate}\itemsep0.2cm
	\item the Lie subalgebra $\g_{\calJ}\subset\g_X $ generated by the elements 
	$\{\xpm{\ia},\, \xz{\ia}\;\vert\; \ia\in \calJ\}$;
	\item the derived Borcherds--Kac--Moody algebra $\g_{\calJ}^{\scsop{BKM}}\coloneqq
	\g(\sfA_{\calJ})'$.
\end{enumerate}
We prove in \cite[Section~5.5]{appel-sala-schiffmann-18} that $\g_{\calJ}$ and 
$\g_{\calJ}^{\scsop{BKM}}$ are canonically isomorphic. More precisely, we 
have the following.

\begin{proposition}\label{prop:bkm-sgp}
	The assignment 
	\begin{align*}
		e_{\ia}\mapsto \xp{\ia} \ , \quad f_{\ia}\mapsto \xm{\ia}  \quad \text{and} \quad h_{\ia} \mapsto \xz{\ia}
	\end{align*}
	for any $\ia\in \calJ$, defines an isomorphism of Lie algebras 
	$\Phi_\calJ\colon \g_{\calJ}^{\scsop{BKM}}\to \g_{\calJ}$.
\end{proposition}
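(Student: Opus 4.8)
The plan is to realise $\Phi_\calJ$ and its inverse through the Serre presentations of the two sides. Since $\sfA_\calJ$ is symmetric, hence a symmetrizable Borcherds--Cartan matrix, the derived algebra $\g_{\calJ}^{\scsop{BKM}}=\g(\sfA_\calJ)'$ is presented, by Remarks~\ref{rem:der-km} and~\ref{rem:r=s}, by Chevalley generators $\{h_{\ia},e_{\ia},f_{\ia}\}_{\ia\in\calJ}$ subject to the defining relations of the corresponding $\wt{\g}'$ and to the Serre relations \eqref{eq:Serrerel-BKM-1}--\eqref{eq:Serrerel-BKM-2}. It therefore suffices to check that the elements $\xz{\ia},\xp{\ia},\xm{\ia}$, $\ia\in\calJ$, satisfy these relations inside $\g_X$; this produces a Lie algebra homomorphism $\Phi_\calJ$ which, by the very definition of $\g_\calJ$ as the subalgebra generated by these elements, is onto it. Injectivity is then formal: $\Phi_\calJ$ is graded for the root lattice grading of $\g_{\calJ}^{\scsop{BKM}}$ and the $\fun{X}$--grading of $\g_X$, via $\alpha_{\ia}\mapsto\cf{\ia}$, and restricts to $h_{\ia}\mapsto\cf{\ia}$ on the degree--zero part; as the characteristic functions $\cf{\ia}$, $\ia\in\calJ$, are linearly independent, $\ker\Phi_\calJ$ is a graded ideal of $\g_{\calJ}^{\scsop{BKM}}$ meeting $\h'=\opspan\{h_{\ia}\}_{\ia\in\calJ}$ trivially. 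But $\g(\sfA_\calJ)'$ is, by construction (Remark~\ref{rem:der-km}), the quotient of the free Lie algebra by the sum of \emph{all} graded ideals with trivial intersection with $\h'$, so it admits no nonzero graded ideal of this kind, whence $\ker\Phi_\calJ=0$.

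Verifying the relations is where the combinatorics of $\calJ$ enters, and I would organise it through Theorem~\ref{thm:ass-18}. The relations $[\xz{\ia},\xz{\ib}]=0$ and $[\xz{\ia},\xpm{\ib}]=\pm\rbf{\ia}{\ib}\xpm{\ib}$ are immediate and match the diagonal relations of $\wt{\g}'$ because $\sfA_\calJ$ is symmetric. For $[e_{\ia},f_{\ib}]=\drc{\ia\ib}h_{\ia}$, the double relation of Theorem~\ref{thm:ass-18} reads $[\xp{\ia},\xm{\ib}]=\drc{\ia\ib}\xz{\ia}+\ca{\ia}{\ib}(\xp{\ia\sgpm\ib}-\xm{\ib\sgpm\ia})$, and for $\ia\neq\ib$ in an irreducible set the correction term vanishes: either $\ia\cap\ib=\emptyset$, so $\ia\sgpm\ib$ and $\ib\sgpm\ia$ are undefined, or one of $\ia,\ib$ is a circle strictly containing the other (case~(c) of irreducibility), say $\ib\subset\ia\simeq S^1$, in which case $\rbf{\ia}{\ib}=0$ --- writing $\ia=(\ia\sgpm\ib)\sgpp\ib$ with $\ia$ non--contractible gives $\rbf{\ia}{\ib}=\rbf{\ia\sgpm\ib}{\ib}+\rbf{\ib}{\ib}=-2+2$ by Remark~\ref{rem:euler-form-identities} --- so $\ca{\ia}{\ib}=0$. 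For the Serre relations I split on $\rbf{\ia}{\ib}\in\{0,-1,-2\}$. If $\rbf{\ia}{\ib}=0$, then either $\ia\perp\ib$ (so $(\ia,\ib)\in\serre{X}$ and $\xpm{\ia\sgpp\ib}=0$) or one of the two is a circle containing the other (apply Remark~\ref{rem:serre-TQ}); in both cases $[\xpm{\ia},\xpm{\ib}]=0$, which is exactly the required Serre relation. If $\rbf{\ia}{\ib}=-1$, then $\ia,\ib$ are contractible and $\ia\sgpp\ib$ is a tree; one checks $(\ia,\ib),(\ib,\ia)\in\serre{X}$ (no subintervals close up into a circle), so $[\xp{\ia},\xp{\ib}]=\cb{\ia}{\ib}\xp{\ia\sgpp\ib}$, and then, using $(\ia,\ia\sgpp\ib)\in\serre{X}$ and that $\ia\sgpp(\ia\sgpp\ib)$ is undefined, $\sfad(\xp{\ia})^{2}(\xp{\ib})=0$, which is \eqref{eq:Serrerel-BKM-1}; the $\xm{}$--versions are identical.

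The remaining Serre relation, for $\rbf{\ia}{\ib}=-2$, is the step I expect to be the main obstacle. Here $\ia,\ib$ are contractible and $\ia\sgpp\ib\simeq S^1$, so $\ia$ and $\ib$ tile the circle and, crucially, $(\ia,\ib)\notin\serre{X}$; thus Theorem~\ref{thm:ass-18} does not compute $[\xp{\ia},\xp{\ib}]$ directly, and one must establish $\sfad(\xp{\ia})^{3}(\xp{\ib})=0$ from scratch. I would show that $\g_X$ has no nonzero homogeneous component of degree $3\cf{\ia}+\cf{\ib}$: by Remark~\ref{rem:int-X}, any iterated bracket of generators $\xp{\ic_1},\dots,\xp{\ic_k}$ of that degree forces every $\ic_j$ to be a subinterval of $\ia\sgpp\ib$, hence an arc or the whole circle; any bracket involving $\xp{\ia\sgpp\ib}$ vanishes by Remark~\ref{rem:serre-TQ}, while the others --- all factors being arcs, with total multiplicity $3$ over $\ia$ --- reduce, via the double and Serre relations, to a short explicitly vanishing list. (Alternatively, subdivide $\ia$ at interior points so that $\calJ$ together with these pieces becomes an irreducible set of \emph{finite} type $A_{n}$, in which the Serre relation is classical; this invokes the present statement for configurations with fewer elementary subintervals, so one proceeds by induction.) Once all relations are verified, $\Phi_\calJ$ is a well-defined surjective homomorphism, it is injective by the argument of the first paragraph, and hence an isomorphism $\g_{\calJ}^{\scsop{BKM}}\to\g_\calJ$.
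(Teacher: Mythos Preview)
Your proposal is correct and follows exactly the line the paper indicates: the paper's proof here is only a sketch, reducing everything to the ``simple observation'' that $\sfad(\xpm{\ia})^{1-\rbf{\ia}{\ib}}(\xpm{\ib})=0$ for $\ia$ contractible and $\ia\neq\ib$ in $\calJ$, and referring to \cite[Section~5.5]{appel-sala-schiffmann-18} for details. Your outline makes explicit both halves the paper leaves implicit --- the case analysis for the Serre relations and the injectivity argument via the maximality of the ideal defining $\g(\sfA_\calJ)'$ --- and your subdivision strategy for the $\rbf{\ia}{\ib}=-2$ case (reducing to an affine type $A$ configuration where the relation is classical) is a clean way to handle the one step the paper's sketch does not unpack.
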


The proof relies on the simple observation that, for $\alpha,\beta\in\calJ$,
$\alpha\neq S^1, \beta$,
\begin{align*}
	\sfad(\xpm{\alpha})^{1-\rbf{\alpha}{\beta}}(\xpm{\beta})=0\ .
\end{align*}
It is then clear that $\g_X $ can be constructed exclusively 
from Borcherds--Kac--Moody algebras. That is, we have
the following.

\begin{corollary}\label{cor:bkm-sgp}
	Let $\calJ,\calJ'$ be two irreducible (finite) sets of intervals in $X$.
	\begin{enumerate}\itemsep0.2cm
		\item If $\calJ'\subseteq\calJ$, there is a canonical embedding
		$\phi'_{\calJ,\calJ'}\colon\g_{\calJ'}\to\g_{\calJ}$ sending $\xpm{\ia} \mapsto \xpm{\ia}$, 
		$\xz{\ia}\to\xz{\ia}$ for $\ia\in\calJ'$.
		\item If $\calJ$ is obtained from $\calJ'$ by replacing an element $\ic\in\calJ'$
		with two intervals $\ia ,\ib $ such that $\ic=\ia \sgpp \ib $,
		there is a canonical embedding $\phi''_{\calJ,\calJ'}\colon\g_{\calJ'}\to\g_{\calJ}$, which is the identity on $\g_{\calJ'\setminus\{\ic\}}=\g_{\calJ\setminus\{\ia,\ib\}}$
		and sends
		\begin{align*}
			\xz{\ic} \mapsto \xz{\ia } +\xz{\ib } \ , \quad \xpm{\ic}  \mapsto \pm(-1)^{\abf{\ib } {\ia } }\, [\xp{\ia } ,\xp{\ib } ] \ .
		\end{align*}
		\item The collection of embeddings $\phi'_{\calJ,\calJ'}, \phi''_{\calJ,\calJ'}$, indexed by all possible irreducible sets of intervals
		in $X$, form a direct system. Moreover,
		\begin{align*}
			\g_X\simeq\colim_{\calJ}\, \g_{\calJ}^{\scsop{BKM}}  \ .
		\end{align*}
	\end{enumerate}
\end{corollary}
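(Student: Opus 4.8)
The plan is to leverage Proposition~\ref{prop:bkm-sgp}, which identifies each $\g_{\calJ}$ with the derived Borcherds--Kac--Moody algebra $\g_{\calJ}^{\scsop{BKM}}$, and to check the three assertions directly in $\g_X$ using the explicit presentation of Theorem~\ref{thm:ass-18}. For part~(1), when $\calJ'\subseteq\calJ$ the matrix $\sfA_{\calJ'}$ is the principal submatrix of $\sfA_{\calJ}$ indexed by $\calJ'$; sending each Chevalley triple $(e_{\ia},f_{\ia},h_{\ia})$ with $\ia\in\calJ'$ to the corresponding triple in $\g_{\calJ}$ is well--defined because all defining relations of $\g_{\calJ'}^{\scsop{BKM}}$ only involve matrix entries among elements of $\calJ'$, hence hold in $\g_{\calJ}^{\scsop{BKM}}$; under $\Phi_{\calJ'}$ and $\Phi_{\calJ}$ this is exactly the inclusion $\g_{\calJ'}\hookrightarrow\g_{\calJ}$ of Lie subalgebras of $\g_X$ generated by a subset of generators, so it is automatically injective. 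First I would make this argument precise and record that $\phi'_{\calJ,\calJ'}$ sends generator to generator.

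For part~(2), the key computation is that in $\g_X$, when $\ic=\ia\sgpp\ib$ is either contractible or homeomorphic to $S^1$ (the only possibilities inside an irreducible set), one has $\xpm{\ic}=\pm(-1)^{\abf{\ib}{\ia}}[\xpm{\ia},\xpm{\ib}]$: indeed $(\ib,\ia)\in\serre{X}$ whenever $\ib\sgpp\ia=\ic$ exists and is not $S^1$ with $\ia$ or $\ib$ a proper subinterval, which is guaranteed here since $\ia,\ib$ are the two ``halves'' of $\ic$, so the Serre relation of Theorem~\ref{thm:ass-18}(3) reads $[\xpm{\ib},\xpm{\ia}]=\pm\cb{\ib}{\ia}\xpm{\ic}$ and $\cb{\ib}{\ia}=(-1)^{\abf{\ib}{\ia}}\rbf{\ia}{\ib}=(-1)^{\abf{\ib}{\ia}}$ since $\rbf{\ia}{\ib}=1$. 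One must then verify that the assignment on the remaining generators $\ic\mapsto\xz{\ia}+\xz{\ib}$, $\xpm{\ic}\mapsto\pm(-1)^{\abf{\ib}{\ia}}[\xp{\ia},\xp{\ib}]$ (together with the identity on $\g_{\calJ'\setminus\{\ic\}}$) respects the presentation of $\g_{\calJ'}^{\scsop{BKM}}$: the diagonal relations follow because $\rbf{\ia\sgpp\ib}{\cdot}=\rbf{\ia}{\cdot}+\rbf{\ib}{\cdot}$; the double relations $[e_{\ic},f_{\ic}]=h_{\ic}$ and the mixed $[e_{\ic},f_{\oc}]=0$ for $\oc\in\calJ'\setminus\{\ic\}$ follow from the double relations in $\g_X$ applied to $\xpm{\ia},\xpm{\ib}$ together with the fact that $\ia\sgpm\oc$, $\ib\sgpm\oc$, etc., all vanish or lie in $\g_{\calJ\setminus\{\ia,\ib\}}$ as dictated by irreducibility of $\calJ$; and the Serre relations among $\ic$ and the other elements of $\calJ'$ match those among $\{\ia,\ib\}$ and the other elements of $\calJ$ by a short case analysis on the value of $\rbf{\ic}{\oc}=\rbf{\ia}{\oc}+\rbf{\ib}{\oc}$. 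Since the image generates $\g_{\calJ}$ and the map is between derived BKM algebras of the same rank $|\calJ'|$ with compatible gradings, injectivity follows (or can be checked by exhibiting a left inverse, namely the projection of $\g_X$ onto $\g_{\calJ'}$).

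For part~(3), one observes that any two irreducible sets $\calJ,\calJ'$ of intervals in $X$ admit a common ``refinement'': there is an irreducible set $\calK$ obtained from both by a finite sequence of the two elementary moves (adding an interval, and splitting $\ic=\ia\sgpp\ib$), because every interval of $X$ decomposes into a finite admissible concatenation of elementary intervals by Definition~\ref{def:topological-quiver-II}, and the elementary intervals occurring in $\calJ\cup\calJ'$ can be organized into a single irreducible set. This makes the family $\{\g_{\calJ}^{\scsop{BKM}}\}$ with the maps $\phi'_{\calJ,\calJ'}$, $\phi''_{\calJ,\calJ'}$ a filtered direct system; compatibility of composites (a cocycle condition) reduces to the identities $\phi'\circ\phi'=\phi'$, $\phi''\circ\phi''=\phi''$ on common refinements and the interchange of a split move with an add move, all of which are immediate from the generator--to--generator descriptions above. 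Finally, the canonical maps $\g_{\calJ}\hookrightarrow\g_X$ are compatible with the transition maps, hence induce $\colim_{\calJ}\g_{\calJ}^{\scsop{BKM}}\to\g_X$; this is surjective because every generator $\xpm{\ia},\xz{\ia}$ of $\g_X$ lies in some $\g_{\calJ}$ (take $\calJ$ to be an irreducible set containing a decomposition of $\ia$), and injective because a nonzero element of the colimit is represented by a nonzero element of some $\g_{\calJ}$, which maps to a nonzero element of $\g_X$ since $\g_{\calJ}\subseteq\g_X$ is a genuine subalgebra. I expect the main obstacle to be part~(2): verifying that \emph{all} of the BKM relations for $\g_{\calJ'}^{\scsop{BKM}}$ are preserved under the split assignment requires carefully tracking the signs $(-1)^{\abf{\cdot}{\cdot}}$ and the coefficients $\cb{\cdot}{\cdot}$ through the Serre relations of Theorem~\ref{thm:ass-18}, and in particular handling the case where $\ic\simeq S^1$ (so $\rbf{\ic}{\ic}=0$ and the would--be Serre relation among $\ic$ and a subinterval becomes a commutation relation as in Remark~\ref{rem:serre-TQ}); this sign bookkeeping, while routine, is where an error is most likely to hide.
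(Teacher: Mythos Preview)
The paper does not supply a proof of this corollary: it is stated immediately after Proposition~\ref{prop:bkm-sgp} with the comment ``It is then clear that $\g_X$ can be constructed exclusively from Borcherds--Kac--Moody algebras,'' and the detailed argument is deferred to \cite[Cor.~5.18]{appel-sala-schiffmann-18}. Your outline is essentially the argument one has to give to unpack that sentence, and the overall strategy (reading the embeddings $\phi',\phi''$ as inclusions of subalgebras of $\g_X$ via Proposition~\ref{prop:bkm-sgp}, then using a common refinement to get a filtered system) is correct and is what the paper has in mind.

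Two corrections are worth making. First, your computation of the sign in part~(2) is garbled: $\cb{\ib}{\ia}$ is defined as $\ca{\ib}{,\,\ib\sgpp\ia}=(-1)^{\abf{\ib}{\ic}}\rbf{\ib}{\ic}$, not $(-1)^{\abf{\ib}{\ia}}\rbf{\ia}{\ib}$, and when $\ia,\ib,\ic$ are contractible one has $\rbf{\ia}{\ib}=-1$ (not $+1$) while $\rbf{\ib}{\ic}=+1$ and $\abf{\ib}{\ic}=1+\abf{\ib}{\ia}$. The upshot $\xpm{\ic}=\pm(-1)^{\abf{\ib}{\ia}}[\xpm{\ia},\xpm{\ib}]$ is correct, but the intermediate line should be fixed. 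Second, the ``left inverse by projection of $\g_X$ onto $\g_{\calJ'}$'' does not exist in general; the clean argument for injectivity is the one you implicitly use elsewhere: since the formula for $\xpm{\ic}$ is an \emph{identity} in $\g_X$, the subalgebra $\g_{\calJ'}\subset\g_X$ is literally contained in $\g_{\calJ}\subset\g_X$, and $\phi''_{\calJ,\calJ'}$ is just this inclusion of subalgebras, hence injective automatically. With these adjustments your proposal is a complete proof along the lines the paper intends.
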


%--------------------------------------------------------
\subsection{The Lie algebras of the line and of the circle}\label{ss:semigroup-line}
%--------------------------------------------------------

In this section we recall the defining relations of the Lie algebras of the line and the circle, 
$\sl(\K)$ and $\sl(\K/\Z)$ with $\K=\Q,\R$, introduced in \cite{sala-schiffmann-17}, and their 
realizations as continuum Kac--Moody algebras. Indeed, these examples were the stepping 
stones for the definition of continuum Kac--Moody algebras.

First, we need to distinguish all relative positions of two intervals. For any two 
intervals $\ia=(a,b]$ and $\ib=(a',b']$, we write
\begin{itemize}\itemsep0.3cm
	\item $\ia\intnext\ib$ if $b=a'$ (\emph{adjacent}) 
	\item $\ia\perp\ib$ if $b<a'$ or $b'<a$ (\emph{disjoint and non--adjacent})
	\item $\ia\rsub\ib$ if $a=a'$ and $b<b'$ (\emph{closed subinterval})
	\item $\ia\lsub\ib$ if $a'<a$ and $b=b'$ (\emph{open subinterval})
	\footnote{The symbol $\rsub$ (resp. $\lsub$) should be read as \emph{$\ia$ is a proper subinterval in 
			$\ib$ starting from the left (resp. right) endpoint}.}
	\item $\ia<\ib$ if $a'<a<b<b'$ (\emph{strict subinterval})
	\item $\ia\intcap\ib$ if $a<a'<b<b'$ (\emph{overlapping})
\end{itemize}

We shall say that $\ia$ and $\ib$ are \emph{orthogonal} if $\ia\perp\ib$.
We are ready to give the definition of $\sl(\K)$.
\begin{definition}\label{def:sl(K)}
	Let $\sl(\K)$ be the Lie algebra generated by elements $e_{\ia}, f_{\ia}, h_{\ia}$, with ${\ia}\in\intsf(\K)$, 
	modulo the following set of relations:
	\begin{itemize}\itemsep0.3cm
		\item {\bf Kac--Moody type relations:} for any two intervals $\ia, \ib$,
		\begin{align*}%\label{eq:kac-rel-1}
			[h_{\ia} ,h_{\ib} ]=0\ , \qquad [h_{\ia} , e_{\ib} ]=\rbfcf{{\ia}}{{\ib}}\, e_{\ib} \ , \qquad [h_{\ia} , f_{\ib} ] =-\rbfcf{{\ia}}{{\ib}}\, f_{\ib} \ , 
			%		\begin{aligned}
			%		[h_{\ia} ,h_{\ib} ] &=0\ ,\\[3pt]
			%		[h_{\ia} , e_{\ib} ] &=( \cf{\ia} ,\cf{\ib} )\, e_{\ib} \ , \\[3pt]
			%		[h_{\ia} , f_{\ib} ] &=-( \cf{\ia} ,\cf{\ib} )\, f_{\ib} \ , 
			%		\end{aligned}
		\end{align*}
		\begin{align*}%\label{eq:kac-rel-2}
			[e_{\ia} ,f_{\ib} ]=
			\begin{cases}
				h_{\ia}  & \text{if }  \ia=\ib\ ,\\[3pt]
				0 & \text{if }  \ia\perp \ib, \ia\intnext \ib, \text{ or } \ib\intnext \ia\ ,
			\end{cases}
		\end{align*}
		
		\item {\bf join relations:} for any two intervals $\ia , \ib $ with $(\ia ,\ib )\in\intsf(\K)^{(2)}_{\sgpp}$,
		\begin{align*}%\label{eq:Joining-finite}
			h_{\ia \sgpp \ib }=h_{\ia} + h_{\ib} \ ,\\
			e_{\ia \sgpp \ib }=(-1)^{\abfcf{{\ib } }{{\ia } }}[e_{\ia } , e_{\ib } ]\ , \qquad
			f_{\ia \sgpp \ib }&= (-1)^{\abfcf{{\ia } }{{\ib } }} [f_{\ia } , f_{\ib } ]\ ;
			%		h_{\ia \sgpp \ib }&=h_{\ia} + h_{\ib} \ ,\label{eq:Joining-h-finite}\\[3pt]
			%		e_{\ia \sgpp \ib }&=(-1)^{\abf{\cf{\ib } }{\cf{\ia } }}
			%		[e_{\ia } , e_{\ib } ]\ ,\label{eq:Joining-e-finite}\\[3pt]
			%		f_{\ia \sgpp \ib }&= (-1)^{\abf{\cf{\ia } }{\cf{\ib } }}
			%		[f_{\ia } , f_{\ib } ]\ ,\label{eq:Joining-f-finite}
		\end{align*}
		
		\item {\bf nest relations:} for any nested $\ia ,\ib \in\intsf(\K)$ (that is, such that	$\ia =\ib $, 
		$\ia \perp \ib $, $\ia <\ib $, $\ib <\ia $, $\ia \rsub \ib $, $\ia \lsub \ib $, $\ib \rsub \ia $, or $\ib \lsub \ia $),
		\begin{align*}
			[e_{\ia } ,e_{\ib } ]=0\quad\text{and}\quad [f_{\ia } , f_{\ib } ]=0\ .
		\end{align*}
		
	\end{itemize}
\end{definition}

\begin{remark}
	It is easy to check that the bracket is anti--symmetric and satisfies the Jacobi identity.
	Note that the joint relations are consistent with anti--symmetry, since, whenever $\ia\sgpp \ib$ is defined, 
	$(-1)^{\abfcf{\ia} {\ib} }=-(-1)^{\abfcf{\ib} {\ia} }$. Moreover, the combination of join and nest relations yields 
	the \emph{(type $\sfA$) Serre relations}  ($\ia\neq\ib$)
	\begin{align}\label{eq:serre-sl(K)}
		\begin{aligned}
			\begin{array}{ll}
				[e_{\ia}, [e_{\ia}, e_{\ib} ]]=0=[f_{\ia}, [f_{\ia}, f_{\ib} ]] & \text{if } \rbfcf{\ia}{\ib} =-1 \ ,  \\[8pt]
				[e_{\ia}, e_{\ib} ]=0=[f_{\ia}, f_{\ib} ] & \text{if } \rbfcf{\ia}{\ib} =0\ .
			\end{array}
		\end{aligned}
	\end{align}
	Let  $\sl(\Z)$ be the subalgebra generated by the elements $e_{\ia}, h_{\ia}, f_{\ia}$ for $\ia$ of the form $(i, i+1]$, $i\in\Z$. Then it is clear that $\sl(\Z)\simeq\sl(\infty)$ and there are 
	canonical strict embeddings $\sl(\Z)\subset\sl(\Q)\subset\sl(\R)$.
\end{remark}

First, note that the Cartan subalgebra of $\sl(\K)$,
$\h\coloneqq\langle h_{\ia}\;|\;\ia\in\intsf(\K)\rangle$, is canonically isomorphic, as a Lie algebra, to the commutative algebra 
$\fun{\K}$ generated by the characteristic functions $\{\xz{\ia} \coloneqq\cf{\ia}\;|\;\ia\in\intsf(\K)\}$. In \cite[Corollary~2.10]{appel-sala-schiffmann-18}, we show that the set of relations satisfied by the generators of $\sl(\K)$ can be simplified, indeed we have:

\begin{proposition}\label{prop:slR-presentation}
	The Lie algebra $\sl(\K)$ is isomorphic to $\g_\K$.
\end{proposition}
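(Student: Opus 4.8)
The plan is to identify the two presentations directly. By Theorem~\ref{thm:ass-18} applied to $X=\K$, together with the fact that $\serre{\K}=\intsf(\K)\times\intsf(\K)$, the Lie algebra $\g_\K$ is presented by the abelian algebra $\fun{\K}$ and symbols $\xpm{\ia}$, $\ia\in\intsf(\K)$, subject to the diagonal action, the double relations, and the Serre relations $[\xpm{\ia},\xpm{\ib}]=\pm\cb{\ia}{\ib}\,\xpm{\ia\sgpp\ib}$ for \emph{every} ordered pair $(\ia,\ib)$, with the convention $\xpm{\ia\sgpp\ib}=0$ whenever $\ia\sgpp\ib$ is not defined. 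On the other side, after identifying $h_\ia$ with $\cf{\ia}$, so that the $h_\ia$ span $\fun{\K}$, Definition~\ref{def:sl(K)} presents $\sl(\K)$ on the \emph{same} generators, subject to the Kac--Moody type relations, the join relations, and the nest relations. Since both Lie algebras are thus quotients of the same free Lie algebra, it suffices to prove that the defining relations of each presentation hold in the other; the resulting maps $\g_\K\to\sl(\K)$ and $\sl(\K)\to\g_\K$ are then the identity on generators, hence mutually inverse.

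First I would check the easy direction, that the relations of $\sl(\K)$ hold in $\g_\K$. The Kac--Moody type relations are either instances of the diagonal action, or instances of the double relations in which the correction term vanishes: for $\ib=\ia$ it is $\ca{\ia}{\ia}(\xp{\ia\sgpm\ia}-\xm{\ia\sgpm\ia})=0$, and for $\ia\perp\ib$, $\ia\intnext\ib$ or $\ib\intnext\ia$ one has $\drc{\ia\ib}=0$ and both $\ia\sgpm\ib$, $\ib\sgpm\ia$ undefined (and $\rbf{\ia}{\ib}=0$ in the first case, so $\ca{\ia}{\ib}=0$). The join relation $\xz{\ia\sgpp\ib}=\xz{\ia}+\xz{\ib}$ is the definition of $\fun{\K}$, and $\xpm{\ia\sgpp\ib}=(-1)^{\abf{\ib}{\ia}}[\xp{\ia},\xp{\ib}]$, resp.\ $(-1)^{\abf{\ia}{\ib}}[\xm{\ia},\xm{\ib}]$, follows from the Serre relation once one checks $\cb{\ia}{\ib}=\ca{\ia}{,\,\ia\sgpp\ib}=(-1)^{\abf{\ia}{\ia\sgpp\ib}}\rbf{\ia}{\ia\sgpp\ib}=(-1)^{1+\abf{\ia}{\ib}}=(-1)^{\abf{\ib}{\ia}}$, using $\rbf{\ia}{\ia\sgpp\ib}=\rbf{\ia}{\ia}+\rbf{\ia}{\ib}=1$ and the identity $(-1)^{\abf{\ia}{\ib}}=-(-1)^{\abf{\ib}{\ia}}$ valid whenever $\ia\sgpp\ib$ exists. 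Finally, for a nested pair $\ia,\ib$ the sum $\ia\sgpp\ib$ is undefined, so the Serre relation reads $[\xpm{\ia},\xpm{\ib}]=0$, which is the nest relation.

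The substantial direction is that the relations of $\g_\K$ hold in $\sl(\K)$. The diagonal action is immediate, and the Serre relation for a pair with $\ia\sgpp\ib$ defined is the join relation (with the coefficient identity just recorded). What remains is to derive, inside $\sl(\K)$: (i) $[e_\ia,e_\ib]=0=[f_\ia,f_\ib]$ for overlapping intervals $\ia\intcap\ib$ and $\ib\intcap\ia$ (all other configurations with $\ia\sgpp\ib$ undefined are nested, hence covered by the nest relations); and (ii) the double relation $[e_\ia,f_\ib]=\ca{\ia}{\ib}\big(e_{\ia\sgpm\ib}-f_{\ib\sgpm\ia}\big)$ in the configurations $\ia\rsub\ib$, $\ia\lsub\ib$, $\ib\rsub\ia$, $\ib\lsub\ia$, $\ia<\ib$, $\ib<\ia$, $\ia\intcap\ib$, $\ib\intcap\ia$ not already covered by the Kac--Moody type relations. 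Each is a short computation: one decomposes the intervals into iterated $\sgpp$-concatenations of elementary pieces lying in pairwise \emph{good} positions (equal, adjacent, or disjoint), uses the join relations to rewrite the relevant generators as iterated brackets, and expands by the Jacobi identity; the resulting summands are annihilated by the Kac--Moody type relations, the nest relations for $\perp$ and for strict inclusion, and the type $\sfA$ Serre relations $[e_\ic,[e_\ic,e_{\ic'}]]=0$ (valid when $\rbf{\ic}{\ic'}=-1$), which are themselves consequences of the join and nest relations (cf.\ the remark following Definition~\ref{def:sl(K)}). For example, if $\ia\intcap\ib$, write $\ia=\ia_1\sgpp\ic$ and $\ib=\ic\sgpp\ib_1$ with $\ia_1\perp\ib_1$; expanding $[e_\ia,e_\ib]$ and using $[e_\ic,[e_\ic,e_{\ib_1}]]=0$ and $[e_{\ia_1},e_{\ib_1}]=0$, one is left with a multiple of $[e_\ic,e_{\ia\sgpp\ib_1}]$, which vanishes because $\ic$ is a strict subinterval of $\ia\sgpp\ib_1$; the same scheme handles $[f_\ia,f_\ib]$ and, with one generator of each type, $[e_\ia,f_\ib]=0$ (here $\ca{\ia}{\ib}=0$ since $\rbf{\ia}{\ib}=0$ and $\ia\sgpm\ib$, $\ib\sgpm\ia$ are both undefined). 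The subinterval cases are similar but shorter: e.g.\ for $\ia\rsub\ib$, put $\ib'=\ib\sgpm\ia$ and use $f_\ib=(-1)^{\abf{\ia}{\ib'}}[f_\ia,f_{\ib'}]$ together with $[e_\ia,f_{\ib'}]=0$ (adjacency) and $[e_\ia,f_\ia]=h_\ia$ to obtain $[e_\ia,f_\ib]=-\ca{\ia}{\ib'}f_{\ib'}=-\ca{\ia}{\ib}f_{\ib\sgpm\ia}$, matching $\ca{\ia}{\ib}(e_{\ia\sgpm\ib}-f_{\ib\sgpm\ia})$ since $\ia\sgpm\ib$ is undefined and $\ca{\ia}{\ib'}=\ca{\ia}{\ib}$.

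The main obstacle is not conceptual but combinatorial: one must verify, case by case over the finitely many relative positions of two intervals in $\K$, that the coefficient emerging from the Jacobi expansion is \emph{exactly} $\ca{\ia}{\ib}$ (resp.\ $\cb{\ia}{\ib}$). In each case this reduces to the elementary additivity identities for $\abf{\cdot}{\cdot}$ and $\rbf{\cdot}{\cdot}$ under concatenation collected in Section~\ref{ss:topological-quiver} (in particular $\abf{\ia}{\ia}=1$ and $\rbf{\ia}{\ia}=2$ for contractible $\ia$, and $(-1)^{\abf{\ia}{\ib}}=-(-1)^{\abf{\ib}{\ia}}$ when $\ia\sgpp\ib$ exists). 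Once both directions are in place, the composites $\g_\K\to\sl(\K)\to\g_\K$ and $\sl(\K)\to\g_\K\to\sl(\K)$ fix all generators and are therefore the identity, so both maps are isomorphisms and $\sl(\K)\simeq\g_\K$. Alternatively, and more economically, one may simply invoke \cite[Corollary~2.10]{appel-sala-schiffmann-18}, which is precisely the assertion that the presentation of $\sl(\K)$ in Definition~\ref{def:sl(K)} can be rewritten in the form of Theorem~\ref{thm:ass-18} for $X=\K$; combined with that theorem it gives $\sl(\K)\simeq\g_\K$ at once.
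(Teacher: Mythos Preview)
Your proposal is correct. The paper itself gives no proof here: the proposition is stated immediately after the sentence ``In \cite[Corollary~2.10]{appel-sala-schiffmann-18}, we show that the set of relations satisfied by the generators of $\sl(\K)$ can be simplified,'' so the paper's argument \emph{is} precisely the citation you mention in your final paragraph. Your proposal goes further, supplying a self-contained derivation by checking that each presentation's defining relations hold in the other and exploiting $\serre{\K}=\intsf(\K)\times\intsf(\K)$. The case analysis you sketch (nested, adjacent, overlapping, strict and boundary subintervals) is exactly what is carried out in the cited source, and the sample computations you give (the overlapping case for $[e_\ia,e_\ib]$ via the type~$\sfA$ Serre relation, and the $\ia\rsub\ib$ case for $[e_\ia,f_\ib]$) are correct; the remaining cases are entirely parallel. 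The only caveat is organizational: the overlapping case of the double relation $[e_\ia,f_\ib]=0$ uses the boundary-subinterval cases (e.g.\ $\ic\lsub\ia$) as input, so those must be established first---your text already orders them this way, but it is worth making the dependency explicit. In short, your alternative closing sentence \emph{is} the paper's proof; the body of your proposal reproduces the content of \cite[Corollary~2.10]{appel-sala-schiffmann-18}.
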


Let us now move to the Lie algebra of the circle.

\begin{definition}\label{def:sl(K/Z)}
	Let $\sl(\K/\Z)$ be the Lie algebra generated by elements $e_{\ia}, f_{\ia}, h_{\ib}$, with ${\ia, \ib}\in\intsf(\K/\Z)$ and $\ia\neq S^1$, 
	modulo the following set of relations:
	\begin{itemize}\itemsep0.3cm
		\item {\bf Kac--Moody type relations:} for any two intervals $\ia, \ib$,
		\begin{align*}
			[h_{\ia} ,h_{\ib} ]=0\ , \qquad [h_{\ia} , e_{\ib} ]=\rbfcf{{\ia}}{{\ib}}\, e_{\ib} \ , \qquad [h_{\ia} , f_{\ib} ] =-\rbfcf{{\ia}}{{\ib}}\, f_{\ib} \ , 
			%		\begin{aligned}
			%		[h_{\ia} ,h_{\ib} ] &=0\ ,\\[3pt]
			%		[h_{\ia} , e_{\ib} ] &=( \cf{\ia} ,\cf{\ib} )\, e_{\ib} \ , \\[3pt]
			%		[h_{\ia} , f_{\ib} ] &=-( \cf{\ia} ,\cf{\ib} )\, f_{\ib} \ , 
			%		\end{aligned}
		\end{align*}
		\begin{align*}
			[e_{\ia} ,f_{\ib} ]=
			\begin{cases}
				h_{\ia}  & \text{if }  \ia=\ib\ ,\\[3pt]
				0 & \text{if }  \ia\perp \ib, \ia\intnext \ib, \text{ or } \ib\intnext \ia\ ,
			\end{cases}
		\end{align*}
		
		\item {\bf join relations:} 
		\begin{itemize}
			\item for any two intervals $\ia , \ib $ with $(\ia ,\ib )\in\intsf(\K/\Z)^{(2)}_{\sgpp}$, we have $h_{\ia \sgpp \ib }=h_{\ia} + h_{\ib}$;
			\item for any two intervals $\ia , \ib $ with $(\ia ,\ib )\in\intsf(\K/\Z)^{(2)}_{\sgpp}$, such that $\ia,\ib, \ia\sgpp\ib\neq S^1$,
			\begin{align*}
				e_{\ia \sgpp \ib }=(-1)^{\abfcf{{\ib } }{{\ia } }}[e_{\ia } , e_{\ib } ]\ , \qquad
				f_{\ia \sgpp \ib }&= (-1)^{\abfcf{{\ia } }{{\ib } }} [f_{\ia } , f_{\ib } ]\ ;
				%		h_{\ia \sgpp \ib }&=h_{\ia} + h_{\ib} \ ,\label{eq:Joining-h-finite}\\[3pt]
				%		e_{\ia \sgpp \ib }&=(-1)^{\abf{\cf{\ib } }{\cf{\ia } }}
				%		[e_{\ia } , e_{\ib } ]\ ,\label{eq:Joining-e-finite}\\[3pt]
				%		f_{\ia \sgpp \ib }&= (-1)^{\abf{\cf{\ia } }{\cf{\ib } }}
				%		[f_{\ia } , f_{\ib } ]\ ,\label{eq:Joining-f-finite}
			\end{align*}
			
		\end{itemize}
		
		\item {\bf nest relations:} for any nested $\ia ,\ib \in\intsf(\K/\Z)$ (that is, such that	$\ia =\ib $, 
		$\ia \perp \ib $, $\ia <\ib $, $\ib <\ia $, $\ia \rsub \ib $, $\ia \lsub \ib $, $\ib \rsub \ia $, or $\ib \lsub \ia $), with $\ia, \ib\neq S^1$,
		\begin{align*}
			[e_{\ia } ,e_{\ib } ]=0\quad\text{and}\quad [f_{\ia } , f_{\ib } ]=0\ .
		\end{align*}
	\end{itemize}
\end{definition}

The continuum Kac--Moody algebra $\g_{S^1}$ strictly contains the Lie algebra 
$\sl(S^1)$ and their difference is reduced to the elements $\xpm{S^1}$ corresponding to the entire space.
More precisely, let $\ol{\g}_{S^1}$ be the subalgebra in $\g_{S^1}$ generated by the elements 
$\xpm{\ia}$, $\xz{\ia}$, $\ia\neq S^1$. Note that the elements $\xpm{S^1}$, $\xz{S^1}$,
generate a Heisenberg Lie algebra of order one in $\g_{S^1}$, which we denote $\heis_{S^1}$. Then, 
$\g_{S^1}=\ol{\g}_{S^1}\oplus\heis_{S^1}$ and there is a canonical embedding $\sl(S^1)\to\g_{S^1}$, whose image is 
$\ol{\g}_{S^1}\oplus\bsfld\cdot \xz{S^1}$.

%-----------------------------------------------------------------------
\bigskip\section{The classical continuum $r$--matrix}\label{s:cont-km-lba}
%-----------------------------------------------------------------------

In this section, we show that continuum Kac--Moody algebras are
naturally endowed with a standard \emph{topological} quasi--triangular 
Lie bialgebra structure. To this end, we provide here an alternative 
construction of continuum Kac--Moody algebras \emph{by duality}
in the spirit of \cite{halbout-99}.

Note that the results of this section rely on the non--degeneracy of 
the Euler form on $\fun{X}$, which is automatic whenever $X\not\simeq S^1$. 
If $X=S^1$, the kernel of the Euler form is one--dimensional, spanned by
the central element $\xz{S^1}$. However, this can be easily corrected by 
extending the vector space $\fun{S^1}$ with a derivation corresponding 
to the Heisenberg Lie algebras $\heis_{S^1}$.
\footnote{In other words, we need to consider the canonical realization of
	the Cartan matrix $[0]$ (cf.\ Section~\ref{ss:km}).}
Henceforth, we will therefore assume that the Euler form is non--degenerate 
on $\fun{X}$ for any vertex space $X$.

%---------------------------------------------------------------------------------------
\subsection{Continuum free Lie algebras}\label{ss:topological cobracket}
%----------------------------------------------------------------------------------------

Let $\FL{\pm}$ be the free Lie algebras generated,
respectively, by the sets $V_{\pm}=\{\xpm{\alpha},\xzpm{\alpha}\;|\;\alpha\in\intsf(X)\}$.
Let $\cf{\alpha}$ be the characteristic function corresponding to the interval $\alpha\in\intsf(X)$,
and $\sfF\coloneqq\fun{X}^+=\mathsf{span}_{\Z_{\geqslant 0}}\{\cf{\ia} \;|\;\ia\in\intsf(X)\}$.
We consider on $\FL{\pm}$ the natural grading over $\sfF$ given by
$\deg(\xpm{\alpha})=\cf{\alpha}$ and $\deg(\xzpm{\alpha})=0$, thus
\begin{align*}
	\FL{\pm}=\bigoplus_{\phi\in\sfF}\FL{\pm,\,\phi}\ .
\end{align*}

\begin{example}
	Let $\alpha,\beta,\gamma\in\intsf(X)$ such that $\alpha=\beta\sgpp\gamma$. Then, 
	the elements $\xpm{\alpha}, [\xpm{\beta},\xpm{\gamma}]$, and
	$[[[\xpm{\beta},\xzp{\gamma}],\xpm{\gamma}], \xzpm{\alpha}]]$ have degree $\cf{\alpha}$.
\end{example}

For $N>0$ and $\phi\in\sfF$, a $N$-th partition of $\phi$ is a tuple 
$\ul{\psi}=(\psi_1,\dots,\psi_N)\in\sfF^N$ such that $\psi_1+\cdots+\psi_N=\phi$. 
We denote the set of $N$th partitions of $\phi$  by $\sfF(\phi, N)$.
Then, we set
\begin{align*}
	\FL{\pm,\, N}^{\odot}\coloneqq\bigoplus_{\phi\in\sfF}
	\left(
	\prod_{\ul{\psi}\in\sfF(\phi,\, N)}
	\FL{\pm,\, \psi_1}\odot\cdots \odot\FL{\pm,\, \psi_N}
	\right)\ ,
\end{align*}
where $\odot=\ten, \wedge$.
We regard $\FL{\pm,\, N}^{\ten}$ (resp. $\FL{\pm,\, N}^{\wedge}$) as a completion 
of $\FL{\pm}^{\ten N}$ (resp. $\wedge^N\FL{\pm}$).
The following is a straightforward generalization of \cite[Propositions~2.2, 2.3, and 2.5]{halbout-99}.

\begin{proposition}\label{prop:halb}\hfill
	\begin{enumerate}[leftmargin=0.7cm]\itemsep0.3cm
		\item For any collection of antisymmetric elements 
		$u^{\pm}\colon \intsf(X)\to\FL{\pm}^{\wedge, 2}$,
		there exist unique maps $\delta_{\pm}\colon \FL{\pm}\to\FL{\pm}^{\wedge, 2}$ such that 
		\begin{align*}
			\delta_{\pm}(\xpm{\alpha})=u^{\pm}_{\alpha}\quad\mbox{and}\quad
			\delta_{\pm}(\xzpm{\alpha})=0 %v^{\pm}_{\alpha}
		\end{align*}
		and the cocycle condition \eqref{eq:cocycle} holds. Moreover, if the co--Jacobi identity 
		\eqref{eq:coJacobi} holds for the generators $\xpm{\alpha}$, \ie %$\xzpm{\alpha}$,
		\begin{align}\label{eq:u-cond-1}
			(\id+(1\,2\,3)+(1\,3\,2))\circ\id\ten\delta_{\pm}(u_{\alpha}^{\pm})=0\ ,
		\end{align}
		then $(\FL{\pm}, [\cdot,\cdot],\delta_{\pm})$ are topological Lie bialgebras.
		\item Fix two matrices $\kappa_{i}\colon \intsf(X)\times\intsf(X)\to\bsfld$, $i=0,1$, and let 
		$\bfV_{\pm}\subset\FL{\pm}$ be the subspace spanned by the set $V_{\pm}$.
		Assume that the elements $u^{\pm}_{\alpha}$ satisfy the condition \eqref{eq:u-cond-1}, so that  $(\FL{\pm}, [\cdot,\cdot],\delta_{\pm})$ are topological Lie bialgebras, and
		belong to $\bfV_{\pm}^{\wedge, 2}$. Then, there exists a 
		unique pairing of Lie bialgebras $\abf{\cdot}{\cdot}\colon\FL{+}\ten\FL{-}\to\bsfld$ 
		such that 
		\begin{align*}
			\begin{array}{|c|c|c|}
				\hline
				\abf{\cdot}{\cdot} & \xzm{\gamma} & \xm{\delta}\\
				\hline
				\xzp{\alpha} & \kappa_{0}(\alpha,\gamma) & 0\\
				\hline
				\xp{\beta} & 0 & \kappa_{1}(\beta,\delta)\\
				\hline
			\end{array}
		\end{align*}
		\item Fix two matrices $\kappa_{i}\colon\intsf(X)\times\intsf(X)\to\bsfld$, $i=0,1$, and 
		a collection of elements $u^{\pm}\colon \intsf(X)\to\bfV_{\pm}^{\wedge, 2}$ satisfying the condition 
		\eqref{eq:u-cond-1}, so that $(\FL{\pm}, [\cdot,\cdot],\delta_{\pm})$ are topological 
		Lie bialgebras with a pairing $\abf{\cdot}{\cdot}\colon \FL{+}\ten\FL{-}\to\bsfld$. Let $\bfJ$
		be a set, and let
		\begin{align*}
			U^{\pm}\colon \bfJ\to\FL{\pm}\quad \text{and} \quad V^{\pm}\colon \bfJ\times\bfJ\to
			\FL{\pm}
		\end{align*}
		be two collections of elements such that 
		$\delta_{\pm}(U^{\pm}_{j})\in\bfV^{\pm}(U_{\cdot},V_{j,\cdot})$, where
		$\bfV^{\pm}(U_{\cdot},V_{j,\cdot})$ denotes the completion in $\FL{\pm}^{\wedge, 2}$
		of the subspace spanned by the elements $U^{\pm}_{\cdot}\wedge V^{\pm}_{j,\cdot}\colon \bfJ\to\FL{\pm}$. Then, if 
		\begin{align}\label{eq:ort-cond}
			\abf{U^{\pm}_{\cdot}}{\xmp{\alpha}}=0=\abf{U^{\pm}_{\cdot}}{\xzmp{\alpha}}\ ,
		\end{align}
		the ideal generated by $U^{\pm}_{\cdot}$ is orthogonal to $\FL{\mp}$ and is a 
		topological coideal in $\FL{\pm}$.
	\end{enumerate}
\end{proposition}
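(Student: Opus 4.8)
My plan is to treat the three statements as the continuum counterparts of \cite[Propositions~2.2, 2.3 and 2.5]{halbout-99} and to adapt Halbout's arguments, the only genuinely new ingredient being the bookkeeping provided by the $\sfF$--grading, which lets me argue by induction on degree (or, on the free side, on bracket length) and makes sense of the infinite sums occurring in the completed spaces $\FL{\pm}^{\wedge,N}$. For part (1), I would first observe that the cocycle condition \eqref{eq:cocycle} says exactly that $\delta_{\pm}$ is a $1$--cocycle of $\FL{\pm}$ valued in the module $\FL{\pm}^{\wedge,2}$ with the (completed) adjoint action. Since $\FL{\pm}$ is free on $V_{\pm}$, the universal property for such cocycles gives a unique extension of the assignment on generators: on a bracket monomial it is the Leibniz--type formula forced by \eqref{eq:cocycle}, and well--definedness follows from the Jacobi identity for $[\cdot,\cdot]$; the $\sfF$--grading guarantees the resulting sums lie in $\FL{\pm}^{\wedge,2}$, and $\delta_{\pm}$ is $\wedge^{2}$--valued because generators are sent to antisymmetric elements and the adjoint action preserves antisymmetry. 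For the Lie bialgebra claim, let $\Theta_{\pm}\colon\FL{\pm}\to\FL{\pm}^{\wedge,3}$ be the co--Jacobi defect $(\id+(1\,2\,3)+(1\,3\,2))\circ(\id\ten\delta_{\pm})\circ\delta_{\pm}$; by \eqref{eq:u-cond-1} it vanishes on $V_{\pm}$, and a direct computation from the cocycle condition and Jacobi shows that $\{x:\Theta_{\pm}(x)=0\}$ is a Lie subalgebra, hence all of $\FL{\pm}$, so $(\FL{\pm},[\cdot,\cdot],\delta_{\pm})$ is a topological Lie bialgebra.

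For part (2), uniqueness is immediate: the two pairing identities \eqref{eq:q:lba-pairing} together with the fact that $V_{\pm}$ generate $\FL{\pm}$ determine $\abf{\cdot}{\cdot}$ on $\FL{+}\ten\FL{-}$ from its values on $V_{+}\times V_{-}$. For existence, I would extend the table by the recursion forced by \eqref{eq:q:lba-pairing}: $\abf{[a,b]}{c}=\abf{a\ten b}{\delta_{-}(c)}$, and when the left argument is a generator, $\abf{\xpm{\alpha}}{[c,d]}=\abf{\delta_{+}(\xpm{\alpha})}{c\ten d}$. The hypothesis $u^{\pm}_{\alpha}\in\bfV_{\pm}^{\wedge,2}$ is exactly what makes this recursion effective, since each step strictly decreases the bracket length on one side and replaces a generator by generators, terminating at the base case $\abf{V_{+}}{V_{-}}$. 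The real content is the consistency check: the functional so produced must descend from formal iterated brackets to $\FL{\pm}$, i.e. must kill antisymmetry and Jacobi relations. Antisymmetry is clear because $\delta_{\pm}$ is $\wedge^{2}$--valued; the Jacobi relations are absorbed using the Jacobi identity on the opposite side together with the co--Jacobi identity for $\delta_{\mp}$ from part (1). Equivalently, one may phrase the construction as a Lie algebra morphism $\FL{+}\to(\FL{-})^{\bullet}$ into the continuous dual with bracket dual to $\delta_{-}$, and then show by a subalgebra argument, as in part (1), that it is also a morphism of Lie coalgebras; by the evident $\pm$--symmetry this yields the pairing of Lie bialgebras.

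For part (3), write $\frakR_{\pm}$ for the ideal of $\FL{\pm}$ generated by the $U^{\pm}_{j}$. I would prove $\abf{\frakR_{\pm}}{\FL{\mp}}=0$ by induction on bracket length: for $[a,u]$ with $u\in\frakR_{\pm}$, the identity $\abf{[a,u]}{w}=\abf{a\ten u}{\delta_{\mp}(w)}$ reduces to pairing $u\in\frakR_{\pm}$ (or, after commuting brackets, one of the $U^{\pm}_{\cdot}$) against strictly shorter elements; the base step is $\abf{U^{\pm}_{\cdot}}{[c,d]}=\abf{\delta_{\pm}(U^{\pm}_{\cdot})}{c\ten d}$, which by $\delta_{\pm}(U^{\pm}_{j})\in\bfV^{\pm}(U_{\cdot},V_{j,\cdot})$ is a combination of $\abf{U^{\pm}_{\cdot}}{c}\,\abf{V^{\pm}_{j,\cdot}}{d}$ terms, hence vanishes by induction, the ultimate base being $\abf{U^{\pm}_{\cdot}}{\xmp{\alpha}}=0=\abf{U^{\pm}_{\cdot}}{\xzmp{\alpha}}$, which is \eqref{eq:ort-cond}. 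For the coideal property, $\delta_{\pm}(U^{\pm}_{j})\in\bfV^{\pm}(U_{\cdot},V_{j,\cdot})\subseteq\frakR_{\pm}\wedge\FL{\pm}\subseteq\frakR_{\pm}\ten\FL{\pm}+\FL{\pm}\ten\frakR_{\pm}$, and for a general $[a,u]\in\frakR_{\pm}$ the cocycle condition gives $\delta_{\pm}([a,u])=\sfad(a)\delta_{\pm}(u)-\sfad(u)\delta_{\pm}(a)$, whose first term stays in $\frakR_{\pm}\ten\FL{\pm}+\FL{\pm}\ten\frakR_{\pm}$ by induction (as $\sfad(a)$ preserves it) and whose second term lies there since $u\in\frakR_{\pm}$; hence $\frakR_{\pm}$ is a topological coideal.

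The routine but genuinely delicate point is the consistency check in part (2) — verifying that the recursively defined pairing is independent of the presentation of an element of $\FL{\pm}$ as an iterated bracket of generators — and, interlaced with it at every stage, checking that the infinite sums produced by $\delta_{\pm}$ on bracket monomials and by the recursions above actually converge in the completed spaces $\FL{\pm}^{\wedge,N}$. This convergence issue is vacuous in the classical finitely generated Kac--Moody setting but here requires precisely the $\sfF$--grading, since $\intsf(X)$ is uncountable; once it is granted, the rest follows the pattern above.
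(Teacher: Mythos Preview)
Your proposal is correct and matches the paper's approach: the paper gives no proof at all, simply stating that the proposition is ``a straightforward generalization of \cite[Propositions~2.2, 2.3, and 2.5]{halbout-99}'', and your sketch is precisely that generalization, carried out with the only new ingredient being the $\sfF$--grading bookkeeping needed to handle the completed tensor products. Your identification of the consistency check in part (2) and the convergence issues as the delicate points is apt, and your inductive arguments on bracket length are the standard ones from Halbout's paper.
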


\subsection{Orthogonal coideals}

We shall now fix a Lie bialgebra structure on $\FL{\pm}$ with a pairing,
and show that the defining relations of continuum Kac--Moody algebras
arise from orthogonal coideals. We shall use repeatedly Proposition~\ref{prop:halb}--(3). 

Henceforth, we consider the Lie bialgebra structure on $\FL{\pm}$ given by
\begin{align}\label{eq:cobracket-FL}
	\delta_{\pm}(\xzpm{\ia})\coloneqq 0\quad\mbox{and}\quad
	\delta_{\pm}(\xpm{\ia} )\coloneqq \mp\xzpm{\ia} \wedge\xpm{\ia} \mp\sum_{\ib\sgpp \ic=\ia}
	\cb{\ib}{\ic}\xpm{\ib} \wedge\xpm{\ic} \ ,
\end{align}
where $\cb{\ib}{\ic}=(-1)^{\abf{\ib}{\ib\sgpp\ic}}\rbf{\ib}{\ib\sgpp\ic}
=\ca{\ib}{, \ib\sgpp\ic}$. Then, we define a pairing $\abf{\cdot}{\cdot}\colon \FL{+}\ten\FL{-}
\to\bsfld$ by setting $\abf{\xpm{\ia}}{\xzmp{\ib}}\coloneqq 0$ and
\begin{align*}
	\abf{\xp{\ia}}{\xm{\ib}}\coloneqq \drc{\ia\ib}
	\quad\mbox{and}\quad
	\abf{\xzp{\ia}}{\xzm{\ib}}\coloneqq 2\rbf{\ia}{\ib}\ .
\end{align*}

\begin{proposition}\label{prop:diag-rel}
	Let $\mathfrak{i}_{\pm}$ be the ideal generated in $\FL{\pm}$ by the elements 
	\begin{align*}
		\xzpm{\ia\sgpp\ib}-\drc{\ia\sgpp\ib}\left(\xzpm{\ia}+\xzpm{\ib}\right)\ ,
		\quad
		[\xzpm{\ia},\xzpm{\ib}]\ ,
		\quad
		[\xzpm{\ia},\xpm{\ib}]\mp\rbf{\ia}{\ib}\xpm{\ib}\ .
	\end{align*}
	Then, $\mathfrak{i}_{\pm}$ is a coideal and it is orthogonal to $\FL{\mp}$.
\end{proposition}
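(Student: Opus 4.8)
The plan is to obtain the statement as a direct application of Proposition~\ref{prop:halb}--(3). I would take $\bfJ$ to be the disjoint union of three index sets --- the set of pairs $(\ia,\ib)$ for which $\ia\sgpp\ib$ is defined, together with two copies of $\intsf(X)\times\intsf(X)$ --- and let $U^{\pm}$ run over the corresponding three families of generators of $\mathfrak{i}_{\pm}$: the elements $\xzpm{\ia\sgpp\ib}-\drc{\ia\sgpp\ib}(\xzpm{\ia}+\xzpm{\ib})$, the elements $[\xzpm{\ia},\xzpm{\ib}]$, and the elements $u^{\pm}_{\ia,\ib}\coloneqq[\xzpm{\ia},\xpm{\ib}]\mp\rbf{\ia}{\ib}\xpm{\ib}$. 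Then there are exactly two things to verify: first, that for each $j\in\bfJ$ the element $\delta_{\pm}(U^{\pm}_j)$ lies in the completed span of elements $U^{\pm}_\cdot\wedge V^{\pm}_{j,\cdot}$ for a suitable family $V^{\pm}_{j,\cdot}$ in $\FL{\pm}$ (in fact in $\bfV_{\pm}$); second, that $\abf{U^{\pm}_\cdot}{\xmp{\alpha}}=0=\abf{U^{\pm}_\cdot}{\xzmp{\alpha}}$ for every $\alpha\in\intsf(X)$. Granting both, Proposition~\ref{prop:halb}--(3) immediately gives that $\mathfrak{i}_{\pm}$ is orthogonal to $\FL{\mp}$ and a topological coideal in $\FL{\pm}$.

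For the first verification: by \eqref{eq:cobracket-FL} the cobracket $\delta_{\pm}$ annihilates every $\xzpm{\gamma}$, hence annihilates the first family, and --- via the cocycle identity \eqref{eq:cocycle} --- also $[\xzpm{\ia},\xzpm{\ib}]$; so the condition is vacuous for the first two families ($V^{\pm}=0$). For the third family, since $\delta_{\pm}(\xzpm{\ia})=0$, the cocycle identity reduces $\delta_{\pm}([\xzpm{\ia},\xpm{\ib}])$ to $\sfad(\xzpm{\ia})\,\delta_{\pm}(\xpm{\ib})$; expanding $\delta_{\pm}(\xpm{\ib})$ with \eqref{eq:cobracket-FL}, applying the Leibniz rule to each wedge factor, and regrouping, I expect to arrive at
\begin{align*}
	\delta_{\pm}(u^{\pm}_{\ia,\ib})=\mp[\xzpm{\ia},\xzpm{\ib}]\wedge\xpm{\ib}\mp\xzpm{\ib}\wedge u^{\pm}_{\ia,\ib}\mp\sum_{\ib'\sgpp\ic'=\ib}\cb{\ib'}{\ic'}\bigl(u^{\pm}_{\ia,\ib'}\wedge\xpm{\ic'}+\xpm{\ib'}\wedge u^{\pm}_{\ia,\ic'}\bigr)\ .
\end{align*}
The mechanism making this regrouping work is the additivity of $\rbf{\cdot}{\cdot}$ along $\sgpp$: since $\rbf{\ia}{\ib}=\rbf{\ia}{\ib'}+\rbf{\ia}{\ic'}$ whenever $\ib=\ib'\sgpp\ic'$, the linear correction $\mp\rbf{\ia}{\ib}\xpm{\ib}$ combines with the $\xzpm{\ib}$-term (resp. its shares combine with the two halves of the sum) to reconstruct copies of $u^{\pm}$. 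Every summand on the right is then of the form $U^{\pm}_\cdot\wedge V^{\pm}_\cdot$ with $V^{\pm}_\cdot\in\bfV_{\pm}$, and although the sum over decompositions $\ib=\ib'\sgpp\ic'$ is in general infinite, it converges in $\FL{\pm}^{\wedge, 2}$, which is exactly the completion appearing in Proposition~\ref{prop:halb}.

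For the second verification I would use that $\abf{\cdot}{\cdot}$ is a pairing of Lie bialgebras, so $\abf{[X_{\pm},Y_{\pm}]}{Z_{\mp}}=\abf{X_{\pm}\wedge Y_{\pm}}{\delta_{\mp}(Z_{\mp})}$, together with $\delta_{\mp}(\xzmp{\alpha})=0$, the fact that every wedge summand of $\delta_{\mp}(\xmp{\alpha})$ carries at least one $\xmp{}$-leg, and the defining values $\abf{\xpm{\cdot}}{\xzmp{\cdot}}=0$, $\abf{\xp{\ia}}{\xm{\ib}}=\drc{\ia\ib}$, $\abf{\xzp{\ia}}{\xzm{\ib}}=2\rbf{\ia}{\ib}$. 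With these, all orthogonality relations fall out by unwinding: for the first family the $\xmp{}$-pairing is visibly zero and the $\xzmp{}$-pairing equals $2(\rbf{\ia\sgpp\ib}{\alpha}-\rbf{\ia}{\alpha}-\rbf{\ib}{\alpha})=0$ by additivity; for $[\xzpm{\ia},\xzpm{\ib}]$ both pairings vanish since $\delta_{\mp}(\xzmp{\alpha})=0$ and $\abf{\xzpm{\cdot}}{\xmp{\cdot}}=0$; and for $u^{\pm}_{\ia,\ib}$ the $\xzmp{\alpha}$-pairing vanishes as before, while in $\abf{u^{\pm}_{\ia,\ib}}{\xmp{\alpha}}$ the contribution of the bracket and that of the linear correction $\mp\rbf{\ia}{\ib}\xpm{\ib}$ are both supported on $\ib=\alpha$ and cancel, using $\rbf{\ia}{\ib}\drc{\ib\alpha}=\rbf{\ia}{\alpha}\drc{\ib\alpha}$ and the normalisation of $\abf{\cdot}{\cdot}$.

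The one genuinely computational step --- and the main obstacle --- is the cobracket identity for $u^{\pm}_{\ia,\ib}$: one has to track term by term how the $\xzpm{\ib}$-contribution of $\delta_{\pm}(\xpm{\ib})$ and the scalar correction $\mp\rbf{\ia}{\ib}\xpm{\ib}$ pair off, and how splitting $\rbf{\ia}{\ib}$ along each decomposition $\ib=\ib'\sgpp\ic'$ turns the sum into copies of $u^{\pm}$. Everything else --- the two vacuous cases and all the orthogonality checks --- is a routine consequence of the bilinearity of $\rbf{\cdot}{\cdot}$, its additivity over $\sgpp$, and the compatibility of $\abf{\cdot}{\cdot}$ with the $\delta_{\pm}$'s.
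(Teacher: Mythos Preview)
Your proposal is correct and follows essentially the same approach as the paper: both verify the hypotheses of Proposition~\ref{prop:halb}--(3) for the three families of generators, handling the first two families trivially via $\delta_{\pm}(\xzpm{\gamma})=0$ and the cocycle identity, and for the third family deriving the same explicit cobracket formula by expanding $\delta_{\pm}(\xpm{\ib})$, applying Leibniz, and regrouping via the additivity $\rbf{\ia}{\ib}=\rbf{\ia}{\ib'}+\rbf{\ia}{\ic'}$. The orthogonality checks you outline are likewise identical to the paper's.
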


\begin{proof}
	We show that the conditions of Proposition~\ref{prop:halb}--(3)
	apply. We first observe that the elements
	$\xzpm{\ia\sgpp\ib}-\drc{\ia\sgpp\ib}\left(\xzpm{\ia}+\xzpm{\ib}\right)$
	are orthogonal to $\FL{\mp}$. 
	Namely, if $\ia\sgpp\ib$ is defined, then $\rbf{\ia\sgpp\ib}{\ic}=\rbf{\ia}{\ic}+\rbf{\ib}{\ic}$,
	and therefore
	\begin{align*}
		\abf{\xzpm{\ia\sgpp\ib}}{\xzmp{\ic}}=\abf{\xzpm{\ia}+\xzpm{\ib}}{\xzmp{\ic}}\ ,
	\end{align*}
	while $\abf{\xzpm{\ia\sgpp\ib}}{\xzmp{\ic}}=0=\abf{\xzpm{\ia}+\xzpm{\ib}}{\xzmp{\ic}}$.
	Moreover, $\delta_{\pm}(\xzpm{\ia})=0$, therefore the condition on the cobracket is trivially 
	satisfied. Similarly, by duality, one has
	\begin{align*}
		\abf{[\xzpm{\ia},\xzpm{\ib}]}{\xzmp{\ic}}=0=\abf{[\xzpm{\ia},\xzpm{\ib}]}{\xmp{\ic}}\ ,
	\end{align*}
	and, by Formula~\eqref{eq:cobracket-FL}, $\delta_{\pm}([\xzpm{\ia},\xzpm{\ib}])=0$.
	Finally, we have
	\begin{align*}
		\abf{[\xzpm{\ia},\xpm{\ib}]}{\xzmp{\ic}}\mp\rbf{\ia}{\ib}\abf{\xpm{\ib}}{\xzmp{\ic}}=0\ ,
	\end{align*}
	and
	\begin{align*}
		\abf{[\xzpm{\ia},\xpm{\ib}]}{\xmp{\ic}}\mp\rbf{\ia}{\ib}\abf{\xpm{\ib}}{\xmp{\ic}}
		&=
		\abf{\xzpm{\ia}\wedge\xpm{\ib}}{\delta_{\mp}(\xmp{\ic})}
		\mp\drc{\ib\ic}\rbf{\ia}{\ib}\\[4pt]
		&=\pm\drc{\ib\ic}\abf{\xzpm{\ia}\wedge\xpm{\ib}}{\xzmp{\ic}\wedge\xmp{\ic}}
		\mp\drc{\ib\ic}\rbf{\ia}{\ib}\\[4pt]
		&=
		\pm\drc{\ib\ic}\rbf{\ia}{\ic}\mp\drc{\ib\ic}\rbf{\ia}{\ib}=0\ .
	\end{align*}
	Moreover, since $\rbf{\ia}{\ib}=\rbf{\ia}{\ic}+\rbf{\ia}{\ic'}$ whenever
	$\ic\sgpp\ic'=\ib$, we get
	\begin{align*}
		\delta_{\pm}&([\xzpm{\ia},\xpm{\ib}]\mp\rbf{\ia}{\ib}\xpm{\ib})\\[4pt]
		&=[\xzpm{\ia}\ten1+1\ten\xzpm{\ia},
		\delta_{\pm}(\xpm{\ib})]\mp\rbf{\ia}{\ib}\delta_{\pm}(\xpm{\ib})\\[4pt]
		&=\mp[\xzpm{\ia},\xzpm{\ib}]\wedge\xpm{\ib}\mp\xzpm{\ib}\wedge[\xzpm{\ia},\xpm{\ib}]\\[3pt]
		&\qquad
		\mp\sum_{\ic\oplus\ic'=\ib}\cb{\ic}{\ic'}
		\left(
		[\xzpm{\ia},\xpm{\ic}]\wedge\xpm{\ic'}
		+\xpm{\ic}\wedge[\xzpm{\ia},\xpm{\ic'}]
		\right)
		\mp\rbf{\ia}{\ib}\delta_{\pm}(\xpm{\ib})\\[4pt]
		&=\mp[\xzpm{\ia},\xzpm{\ib}]\wedge\xpm{\ib}
		\mp\xzpm{\ib}\wedge\left([\xzpm{\ia},\xpm{\ib}]\mp\rbf{\ia}{\ib}\xpm{\ib}\right)\\[3pt]
		&\qquad
		\mp\sum_{\ic\oplus\ic'=\ib}\cb{\ic}{\ic'}
		\left(
		\left([\xzpm{\ia},\xpm{\ic}]\mp\rbf{\ia}{\ic}\xpm{\ic}\right)\wedge\xpm{\ic'}
		+\xpm{\ic}\wedge\left([\xzpm{\ia},\xpm{\ic'}]\mp\rbf{\ia}{\ic'}\xpm{\ic'}\right)
		\right)\ .
	\end{align*}
	The result follows from Proposition~\ref{prop:halb}--(3).
\end{proof}

Thanks to this result, the pairing $\abf{\cdot}{\cdot}\colon \FL{+}\ten\FL{-}\to\bsfld$ 
descends to a pairing between the topological Lie bialgebras
$\wt{\d}_{\pm}\coloneqq\FL{\pm}/\mathfrak{i}_{\pm}$.

\begin{proposition}\label{prop:serre-rel}
	Let $\mathfrak{s}_{\pm}$ be the ideal generated in $\wt{\d}_{\pm}$
	by the elements 
	\begin{align*}
		s_{\ia\ib}^{\pm}\coloneqq [\xpm{\ia},\xpm{\ib}]\mp\cb{\ia}{\ib}\xpm{\ia\sgpp\ib} \ ,
	\end{align*}
	with $(\ia,\ib)\in\serre{X}$. 
	Then, $\mathfrak{s}_{\pm}$ is a coideal and it is orthogonal to $\wt{\d}_{\mp}$.
\end{proposition}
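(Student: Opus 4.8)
The plan is to deduce the statement from Proposition~\ref{prop:halb}--(3), exactly as Proposition~\ref{prop:diag-rel} was deduced from it; one applies it to the topological Lie bialgebras $\wt{\d}_{\pm}=\FL{\pm}/\mathfrak{i}_{\pm}$ equipped with their induced pairing (if one wants to stay inside the literal statement of Proposition~\ref{prop:halb}, lift each $s^{\pm}_{\ia\ib}$ to $\FL{\pm}$ and adjoin the generators of $\mathfrak{i}_{\pm}$ to the family $U^{\pm}$; the conclusion then descends to $\wt{\d}_{\pm}$). So the job reduces to two checks: (i) each generator $s^{\pm}_{\ia\ib}$ is orthogonal to the generators $\xmp{\ic},\xzmp{\ic}$ of $\wt{\d}_{\mp}$; and (ii) $\delta_{\pm}(s^{\pm}_{\ia\ib})$ lies in the completion, inside $\wt{\d}_{\pm}^{\wedge, 2}$, of the span of elements of the form $s^{\pm}_{\ic\ie}\wedge w$ with $(\ic,\ie)\in\serre{X}$ and $w$ a generator.

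For (i) I would argue as in the proof of Proposition~\ref{prop:diag-rel}. Since $\abf{\cdot}{\cdot}$ is a pairing of Lie bialgebras, $\abf{[\xp{\ia},\xp{\ib}]}{\xm{\ic}}=\abf{\xp{\ia}\wedge\xp{\ib}}{\delta_{-}(\xm{\ic})}$; in $\delta_{-}(\xm{\ic})$ only the component $\sum_{\id\sgpp\ie=\ic}\cb{\id}{\ie}\,\xm{\id}\wedge\xm{\ie}$ survives the pairing (because $\abf{\xp{\cdot}}{\xzm{\cdot}}=0$), and it contributes $\cb{\ia}{\ib}$ precisely when $\ia\sgpp\ib=\ic$ and $0$ otherwise, i.e. exactly $\cb{\ia}{\ib}\,\drc{\ia\sgpp\ib,\,\ic}=\abf{\cb{\ia}{\ib}\xp{\ia\sgpp\ib}}{\xm{\ic}}$; hence $\abf{s^{+}_{\ia\ib}}{\xm{\ic}}=0$. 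Likewise $\abf{s^{+}_{\ia\ib}}{\xzm{\ic}}=0$ since $\delta_{-}(\xzm{\ic})=0$ and $\abf{\xp{\cdot}}{\xzm{\cdot}}=0$; the identities for $s^{-}_{\ia\ib}$ follow by the mirror computation, and the case $\ia\perp\ib$ (where $\xpm{\ia\sgpp\ib}=0$ and $s^{\pm}_{\ia\ib}=[\xpm{\ia},\xpm{\ib}]$) is handled by the same argument.

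For (ii) the idea is to expand $\delta_{\pm}(s^{\pm}_{\ia\ib})=\delta_{\pm}([\xpm{\ia},\xpm{\ib}])\mp\cb{\ia}{\ib}\,\delta_{\pm}(\xpm{\ia\sgpp\ib})$ via the cocycle identity \eqref{eq:cocycle} and the explicit cobracket \eqref{eq:cobracket-FL}, to reduce modulo $\mathfrak{i}_{\pm}$ (so that $[\xzpm{\id},\xpm{\ie}]=\pm\rbf{\id}{\ie}\xpm{\ie}$ and $\xzpm{\id\sgpp\ie}=\xzpm{\id}+\xzpm{\ie}$), and to show that what remains is a completed sum of wedges one factor of which is some $s^{\pm}_{\ic\ie}$. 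Concretely, the ``Cartan-type'' terms reorganise and cancel against $\mp\cb{\ia}{\ib}\,\delta_{\pm}(\xpm{\ia\sgpp\ib})$ exactly as in the proof of Proposition~\ref{prop:diag-rel}, up to a single leftover $\mp\xzpm{\ia\sgpp\ib}\wedge s^{\pm}_{\ia\ib}$; the remaining terms all have the form $[\xpm{\ib},\xpm{\ic}]\wedge\xpm{\ie}$, $\xpm{\ic}\wedge[\xpm{\ib},\xpm{\ie}]$, or their $\ia\leftrightarrow\ib$ analogues, with $\ic\sgpp\ie=\ia$ or $\ic\sgpp\ie=\ib$. Here one invokes the combinatorial fact that, whenever $(\ia,\ib)\in\serre{X}$ and $\ic\sgpp\ie$ is such a sub-decomposition of $\ia$ or of $\ib$, the pairs that actually occur --- $(\ic,\ie)$ together with $(\ic,\ib)$, $(\ie,\ib)$ (resp. $(\ia,\ic)$, $(\ia,\ie)$) --- again lie in $\serre{X}$, or else the corresponding sum of intervals is undefined, in which case the bracket is itself one of the generators $s^{\pm}$; this follows by inspection of the definition of $\serre{X}$ --- it is precisely the stability condition imposed there on subintervals $\ia'\subseteq\ia$, $\ib'\subseteq\ib$ that makes it work --- together with the analysis of intervals in \cite{appel-sala-schiffmann-18}. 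Granting this, each such bracket may be rewritten modulo $\mathfrak{s}_{\pm}$ as $\pm\cb{\cdot}{\cdot}\,\xpm{\cdot\sgpp\cdot}$; after the substitution the leftover pure terms cancel in the same pattern as in Proposition~\ref{prop:diag-rel}, and every surviving term is visibly of the shape $s^{\pm}_{\cdot\cdot}\wedge(\text{generator})$. Proposition~\ref{prop:halb}--(3) then yields simultaneously that $\mathfrak{s}_{\pm}\perp\wt{\d}_{\mp}$ and that $\mathfrak{s}_{\pm}$ is a topological coideal in $\wt{\d}_{\pm}$.

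I expect the genuine difficulty to lie entirely in the combinatorial book-keeping of step (ii): identifying exactly which sub-decompositions of $\ia$ and of $\ib$ appear in $\delta_{\pm}(s^{\pm}_{\ia\ib})$ and checking that $\serre{X}$ is closed under passing to them (or that the missing sums are undefined, so that the corresponding brackets are already Serre generators). Once that closure is in hand, the remainder is a sign-tracking computation rigidly parallel to the proof of Proposition~\ref{prop:diag-rel}, with no new ideas and with all normalisations already pinned down there.
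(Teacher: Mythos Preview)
Your proposal is correct and follows exactly the same route as the paper: verify orthogonality of $s^{\pm}_{\ia\ib}$ against the generators of $\wt{\d}_{\mp}$ via the Lie bialgebra pairing and the explicit cobracket \eqref{eq:cobracket-FL}, then check that $\delta_{\pm}(s^{\pm}_{\ia\ib})$ lands in the span of elements $s^{\pm}_{\ic\ic'}\wedge V$, and conclude by Proposition~\ref{prop:halb}--(3). In fact, the paper's own proof is terser than yours on step (ii): it simply asserts that ``one checks by direct inspection'' that $\delta_{\pm}(s^{\pm}_{\ia\ib})=\sum_{\ic\ic'} s^{\pm}_{\ic\ic'}\wedge V^{\ia\ib}_{\ic\ic'}$, without spelling out the combinatorial closure of $\serre{X}$ that you correctly identify as the substantive point.
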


\begin{proof}
	We proceed as before. Clearly, we have $\abf{s_{\ia\ib}^{\pm}}{\xzmp{\ic}}=0$ and
	\begin{align*}
		\abf{s_{\ia\ib}^{\pm}}{\xmp{\ic}}
		&=\abf{[\xpm{\ia},\xpm{\ib}]}{\xmp{\ic}}\mp\cb{\ia}{\ib}\abf{\xpm{\ia\sgpp\ib}}{\xmp{\ic}}\\[4pt]
		&=\abf{\xpm{\ia}\wedge\xpm{\ib}}{\delta_{\mp}(\xmp{\ic})}
		\mp\drc{\ic,\ia\sgpp\ib}\cb{\ia}{\ib}\\[4pt]
		&=\pm\drc{\ic,\ia\sgpp\ib}\cb{\ia}{\ib}\mp\drc{\ic,\ia\sgpp\ib}\cb{\ia}{\ib}=0\ ,
	\end{align*}
	therefore the elements $s_{\ia\ib}^{\pm}$ are orthogonal to $\wt{\d}_{\mp}$.
	Finally, one checks by direct inspection that 
	$\delta_{\pm}(s^{\pm}_{\ia\ib})=\sum_{\ic\ic'} s^{\pm}_{\ic\ic'}\wedge V^{\ia\ib}_{\ic\ic'}$
	for some vectors $V^{\ia\ib}_{\ic\ic'}\in\wt{\d}_{\pm}$.
	The result follows from Proposition~\ref{prop:halb}--(3).
\end{proof}

\subsection{Continuum Kac--Moody algebras by duality}

We now show that the procedure described above realizes the
continuum Kac--Moody algebra $\g_X$ as a topological Lie bialgebras,
endowed with a non--degenerate invariant bilinear form.

Set $\d_{\pm}\coloneqq\wt{\d}_{\pm}/\mathfrak{s}_{\pm}$. Then, $\d_{\pm}$
are topological Lie bialgebras endowed with a Lie bialgebra pairing 
$\abf{\cdot}{\cdot}\colon\d_+\ten\d_-\to\bsfld$. In particular, $(\d_+,\d_-)$ is a matched 
pair of Lie algebras with respect to the coadjoint actions given by 
$\sfad^\ast(d_{\pm})(d'_{\mp})\coloneqq\pm\abf{1\ten d_{\pm}}{\delta_{\mp}(d'_{\mp})}$, $d_{\pm}, 
d'_{\pm}\in\d_{\pm}$.
Let $\dtwo\coloneqq\d_+\dcs\d_-$ be the double cross sum Lie bialgebra. 

\begin{proposition}\label{prop:db-rel}
	The following relations hold in $\dtwo$. For any $\ia,\ib\in\intsf(X)$
	\begin{align}\label{eq:db-rel}
		[\xp{\ia},\xm{\ib}]=\drc{\ia\ib}\frac{\xzp{\ia}+\xzm{\ia}}{2}
		+\ca{\ia}{\ib}\left(\xp{\ia\sgpm\ib}-\xm{\ib\sgpm\ia}\right)\ ,
	\end{align}
	where $\ca{\ia}{\ib}\coloneqq (-1)^{\abf{\ia}{\ib}}\rbf{\ia}{\ib}$\ .
\end{proposition}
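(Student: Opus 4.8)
The plan is to compute $[\xp{\ia},\xm{\ib}]$ directly inside $\dtwo=\d_+\dcs\d_-$ from the definition of the double cross sum bracket, using the structure maps already at hand: the cobrackets $\delta_\pm$, which by Propositions~\ref{prop:diag-rel} and \ref{prop:serre-rel} descend from $\FL{\pm}$ to $\d_\pm$ and are still given on generators by \eqref{eq:cobracket-FL}, together with the Lie bialgebra pairing $\abf{\cdot}{\cdot}$. For $X\in\d_+$ and $Y\in\d_-$ the bracket is $[X,Y]=\sfad^\ast(X)(Y)-\sfad^\ast(Y)(X)$, where $\sfad^\ast(X)(Y)=\abf{1\ten X}{\delta_-(Y)}\in\d_-$ and $\sfad^\ast(Y)(X)=-\abf{1\ten Y}{\delta_+(X)}\in\d_+$. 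Thus $[\xp{\ia},\xm{\ib}]$ is obtained by pairing $\xp{\ia}$ against the second tensor leg of $\delta_-(\xm{\ib})$ and $\xm{\ib}$ against the second tensor leg of $\delta_+(\xp{\ia})$, and adding the results.

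First I would expand $\sfad^\ast(\xp{\ia})(\xm{\ib})=\abf{1\ten\xp{\ia}}{\delta_-(\xm{\ib})}$ using $\delta_-(\xm{\ib})=\xzm{\ib}\wedge\xm{\ib}+\sum_{\mu\sgpp\nu=\ib}\cb{\mu}{\nu}\,\xm{\mu}\wedge\xm{\nu}$. Since $\abf{\xp{\ia}}{\xzm{\gamma}}=0$ and $\abf{\xp{\ia}}{\xm{\gamma}}=\drc{\ia\gamma}$, pairing with $\xp{\ia}$ annihilates all but finitely many terms of this a priori infinite expression: the Cartan summand produces a multiple of $\drc{\ia\ib}\,\xzm{\ib}$, while in the sum over decompositions $\mu\sgpp\nu=\ib$ only the unique summand with $\{\mu,\nu\}\ni\ia$ survives — necessarily $\{\mu,\nu\}=\{\ia,\ib\sgpm\ia\}$ — and produces a multiple of $\xm{\ib\sgpm\ia}$. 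The coefficient of $\xm{\ib\sgpm\ia}$ is then rewritten through \eqref{eq:ca-cb}: $\cb{\ia}{\ib\sgpm\ia}=\ca{\ia}{,\,\ia\sgpp(\ib\sgpm\ia)}=\ca{\ia}{\ib}$ and $\cb{\ib\sgpm\ia}{\ia}=\ca{\ib\sgpm\ia}{\ib}$, the latter being reduced using the identities of Remark~\ref{rem:euler-form-identities} (the behaviour of $\abf{\cdot}{\cdot}$ under passing to a subinterval and to its complement, applied to $\ib=(\ib\sgpm\ia)\sgpp\ia$). A symmetric computation extracts $\sfad^\ast(\xm{\ib})(\xp{\ia})$ from $\delta_+(\xp{\ia})=-\xzp{\ia}\wedge\xp{\ia}-\sum_{\mu\sgpp\nu=\ia}\cb{\mu}{\nu}\,\xp{\mu}\wedge\xp{\nu}$, yielding a multiple of $\drc{\ia\ib}\,\xzp{\ia}$ together with a multiple of $\xp{\ia\sgpm\ib}$.

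Collecting the two contributions, the Cartan terms combine into $\drc{\ia\ib}\,\tfrac12(\xzp{\ia}+\xzm{\ia})$ — the coefficient $\tfrac12$ being accounted for by the normalization $\abf{\xzp{\gamma}}{\xzm{\gamma'}}=2\rbf{\gamma}{\gamma'}$ of the Cartan pairing — and the remaining terms combine into $\ca{\ia}{\ib}\,(\xp{\ia\sgpm\ib}-\xm{\ib\sgpm\ia})$, which is \eqref{eq:db-rel}. I expect the main obstacle to be precisely this coefficient bookkeeping: one must verify that the structure constants produced by $\delta_\pm$ — namely $\cb{\ib\sgpm\ia}{\ia}$ and $\cb{\ia}{\ib\sgpm\ia}$ together with their analogues in the $\delta_+$ step — collapse, after passing through \eqref{eq:ca-cb} and Remark~\ref{rem:euler-form-identities}, to the single constant $\ca{\ia}{\ib}$ with the correct sign, which requires a short case analysis according to whether the relevant subintervals are contractible and whether $\ia$ is adjacent to or properly contained in $\ib$. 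The degenerate case $\ia=\ib$ — in which $\ia\sgpm\ib$ and $\ib\sgpm\ia$ are undefined, so $\xp{\ia\sgpm\ib}$ and $\xm{\ib\sgpm\ia}$ vanish by convention and only $\drc{\ia\ib}\,\tfrac12(\xzp{\ia}+\xzm{\ia})$ remains — is immediate.
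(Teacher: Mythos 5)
Your proposal is correct and follows essentially the same route as the paper: one computes $\sfad^\ast(\xp{\ia})(\xm{\ib})$ and $\sfad^\ast(\xm{\ib})(\xp{\ia})$ by pairing against the cobrackets \eqref{eq:cobracket-FL}, observes that only the Cartan term and the (unordered) decomposition $\{\ia,\ib\sgpm\ia\}$ (resp.\ $\{\ib,\ia\sgpm\ib\}$) survive, and reduces the resulting coefficients $\tfrac12\bigl(\cb{\ib\sgpm\ia}{\ia}-\cb{\ia}{\ib\sgpm\ia}\bigr)$, etc., to $\mp\ca{\ia}{\ib}$ via $\cb{a}{b}=\ca{a}{,\,a\sgpp b}=-\ca{b}{,\,a\sgpp b}$. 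One small correction: the factor $\tfrac12$ on $\drc{\ia\ib}(\xzp{\ia}+\xzm{\ia})$ does not come from the normalization $\abf{\xzp{\gamma}}{\xzm{\gamma'}}=2\rbf{\gamma}{\gamma'}$ (the Cartan--Cartan pairing never enters this computation); it comes from the antisymmetrization in $\xzmp{\ib}\wedge\xmp{\ib}$, of which only one tensor leg pairs nontrivially with $\xpm{\ia}$.
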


\begin{proof}
	It is enough to observe that, by definition, 
	\begin{align*}
		\sfad^\ast(\xp{\ia})(\xm{\ib})&=\drc{\ia\ib}\frac{\xzm{\ia}}{2}
		+\frac{\cb{\ib\sgpm\ia}{,\, \ia}-\cb{\ia}{, \,\ib\sgpm\ia}}{2}\xm{\ib\sgpm\ia}\ ,\\
		\sfad^\ast(\xm{\ib})(\xp{\ia})&=\drc{\ia\ib}\frac{\xzp{\ia}}{2}
		+\frac{\cb{\ia\sgpm\ib}{, \, \ib}-\cb{\ib}{,\, \ia\sgpm\ib}}{2}\xp{\ia\sgpm\ib}\ .
	\end{align*}
	Moreover, since $\cb{a}{b}=\ca{a}{,\, a\sgpp b}$ and $\ca{a}{,\, a\sgpp b}=-\ca{b}{,\, a\sgpp b}$, 
	we get
	\begin{align*}
		\frac{\cb{\ib\sgpm\ia}{,\, \ia}-\cb{\ia}{,\, \ib\sgpm\ia}}{2}=-\ca{\ia}{\ib}\ ,
		\quad
		\frac{\cb{\ia\sgpm\ib}{,\, \ib}-\cb{\ib}{,\, \ia\sgpm\ib}}{2}=\ca{\ia}{\ib}\ .
	\end{align*}
\end{proof}

The combination of Propositions \ref{prop:diag-rel}, \ref{prop:serre-rel}, and \ref{prop:db-rel}
leads to the following.

\begin{theorem}\label{thm:cont-km-lba}
	Let $\calQ_X$ be a continuum quiver and $\g_X$ the corresponding 
	continuum Kac--Moody algebras. 
	\begin{enumerate}\itemsep0.2cm
		\item The Euler form \eqref{eq:Euler-form} on $\fun{X}$ uniquely extends to 
		a non--degenerate invariant symmetric bilinear form $\rbf{\cdot}{\cdot}\colon \g_X \ten\g_X \to\bsfld$
		defined on the generators as follows:
		\begin{align*}
			\rbf{\xz{\ia}}{\xz{\ib}}\coloneqq \rbf{\ia}{\ib}\ ,\quad 
			\rbf{\xpm{\ia}}{\xz{\ib}}\coloneqq 0\ ,\quad 
			\rbf{\xpm{\ia}}{\xpm{\ib}}\coloneqq 0\ ,\quad
			\rbf{\xp{\ia}}{\xm{\ib}}\coloneqq \drc{\ia\ib}\ .
		\end{align*}
		\item There is a unique topological cobracket $\delta\colon \g_X \to\g_X \wh{\ten}\g_X$ 
		defined on the generators by
		\begin{align}\label{eq:cobracket-g(X)}
			\delta(\xz{\ia})\coloneqq 0\quad\mbox{and}\quad
			\delta(\xpm{\ia} )\coloneqq \xzpm{\ia} \wedge\xpm{\ia}+\sum_{\ib\sgpp \ic=\ia}
			\cb{\ib}{\ic}\xpm{\ib} \wedge\xpm{\ic} \ ,
		\end{align}
		and inducing on $\g_X $ a topological Lie bialgebra structure, with respect to which
		the positive and negative Borel subalgebras $\b_X^{\pm}$ are Lie sub-bialgebras.
		\item The Euler form restricts to a non--degenerate pairing of Lie bialgebras 
		$\rbf{\cdot}{\cdot}\colon \b_X^+\ten(\b_X^-)^{\scsop{cop}}\to\bsfld$. Then, the canonical 
		element $r_X\in\b_X^+\wh{\ten}\b_X^-$ corresponding to $\rbf{\cdot}{\cdot}$
		defines a quasi--triangular structure on $\g_X$.  
	\end{enumerate}
\end{theorem}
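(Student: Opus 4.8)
The strategy is to reduce the entire statement to the machinery of Proposition~\ref{prop:halb} and Propositions~\ref{prop:diag-rel}, \ref{prop:serre-rel}, \ref{prop:db-rel}, by identifying $\g_X$ with the quotient $\dtwo/(\text{central }\zh)$ of the double cross sum $\dtwo=\d_+\dcs\d_-$. First I would record that, by Propositions~\ref{prop:diag-rel} and \ref{prop:serre-rel}, the pairing $\abf{\cdot}{\cdot}\colon\FL{+}\ten\FL{-}\to\bsfld$ descends to a Lie bialgebra pairing on $\d_{\pm}=\FL{\pm}/(\mathfrak{i}_{\pm}+\mathfrak{s}_{\pm})$, so that $\dtwo$ is a topological Lie bialgebra carrying an invariant (possibly degenerate) symmetric form $\rbf{x}{y}\coloneqq\abf{x_+}{y_-}+\abf{x_-}{y_+}$, where $x_\pm,y_\pm$ are the $\d_\pm$-components. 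The kernel $\kk\coloneqq\ker\rbf{\cdot}{\cdot}$ is a graded coideal ideal in $\dtwo$ (invariance of the form makes it an ideal; the cocycle identity makes it a coideal — this is the standard argument recalled in the footnote to Section~\ref{ss:km}). As in Section~\ref{ss:KM-halb}, one checks that the diagonal copy $\zh$ of $\fun{X}$ sitting inside $\dtwo$ as $\{(\xzp{\ia}-\xzm{\ia})/2\}$ is central, and that $\dtwo/\kk$ has the form $\g\oplus\zh$ for a Lie algebra $\g$ generated by $\fun{X}$ and $\xpm{\ia}$. The relations of Theorem~\ref{thm:ass-18} are then exactly what survives: the diagonal action comes from $\mathfrak{i}_\pm$, the double relations from Proposition~\ref{prop:db-rel} (after identifying $(\xzp{\ia}+\xzm{\ia})/2$ with $\xz{\ia}$ modulo $\zh$), and the Serre relations from $\mathfrak{s}_\pm$. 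A surjection $\g_X\twoheadrightarrow\g$ exists by the defining presentation; since $\ker$ is a graded ideal of $\g_X$ trivially intersecting $\fun{X}$ (because the form is nondegenerate on $\fun{X}$), it is trivial, giving $\g_X\simeq\g$. This proves (1): the Euler form extends to $\wt{\g}_X$, has kernel $\r_X$, and descends to a nondegenerate invariant symmetric form on $\g_X$.

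For (2), the cobracket on $\dtwo$ restricts and descends. Explicitly, $\delta_{\dtwo}=\delta_{\d_-}-\delta_{\d_+}$ on the two Borel pieces, plus the mixed term coming from the matched pair; since $\zh$ is a coideal, the cobracket descends to $\dtwo/\kk\simeq\g_X$, and one reads off formula~\eqref{eq:cobracket-g(X)} on the generators from \eqref{eq:cobracket-FL} (the sign flips because passing from $\FL{\pm}$ to the Borel of $\g_X$ absorbs the overall $\mp$). Uniqueness is automatic since the generators generate. That $\b_X^\pm$ are Lie sub-bialgebras is visible from \eqref{eq:cobracket-g(X)}: $\delta(\xpm{\ia})$ lies in $\b_X^\pm\wh\ten\b_X^\pm$ because $\xzpm{\ia}\in\fun{X}\subset\b_X^\pm$ and $\xpm{\ib},\xpm{\ic}\in\b_X^\pm$.

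For (3), the nondegenerate form of (1) identifies $\b_X^+$ with the graded dual of $\b_X^-$ (using the triangular decomposition \eqref{eq:triangular-dec}: $\fun{X}$ pairs with itself nondegenerately, and $\g_\phi$ pairs with $\g_{-\phi}$, with finite-dimensional graded pieces). A direct check that this matches the restriction of $\abf{\cdot}{\cdot}$ shows it is a pairing of Lie bialgebras after twisting $\b_X^-$ to $(\b_X^-)^{\scsop{cop}}$ — the sign in the cobracket formula \eqref{eq:cobracket-g(X)} is precisely what makes $\rbf{\cdot}{\cdot}\colon\b_X^+\ten(\b_X^-)^{\scsop{cop}}\to\bsfld$ compatible with bracket and cobracket. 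The canonical element $r_X\in\b_X^+\wh\ten\b_X^-$ is then the image of $\id$ under $\End(\b_X^-)\simeq\b_X^+\wh\ten\b_X^-$; by the general theory of Drinfeld doubles / Manin triples recalled in Sections~\ref{sss:drinf-dbl}--\ref{sss:manin-triple}, $\Omega=r_X+(r_X)_{21}$ is $\g_X$-invariant, $r_X$ solves the CYBE \eqref{eq:cybe}, and $\delta=\der r_X$, so $(\g_X,[\cdot,\cdot],\delta,r_X)$ is topologically quasi-triangular.

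\textbf{Main obstacle.} The delicate point is the identification $\dtwo/\kk\simeq\g_X\oplus\zh$ and, in particular, verifying that $\kk$ coincides with the sum $\mathfrak{s}\coloneqq(\mathfrak{i}_+ + \mathfrak{s}_+)\oplus(\mathfrak{i}_- + \mathfrak{s}_-)\oplus(\text{double relations})$ rather than being strictly larger — i.e., that no further relations are forced by nondegeneracy. This is the continuum analogue of the Gabber--Kac theorem, and here it is not proved directly but imported: one invokes Theorem~\ref{thm:ass-18} (the explicit presentation of $\g_X$ by Serre relations) to know that $\g_X$ has trivial radical, so the surjection $\g_X\twoheadrightarrow\dtwo/(\kk\oplus\zh)$ is forced to be an isomorphism, and then $\kk=\mathfrak{s}$ follows a posteriori. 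A secondary technical nuisance is the handling of $X\simeq S^1$, where $\rbf{\cdot}{\cdot}$ degenerates on $\fun{S^1}$; as flagged before Section~\ref{ss:topological cobracket}, one works throughout with $\fun{X}$ enlarged by a derivation (the canonical realization of the $1\times 1$ zero matrix), so that nondegeneracy on the Cartan part is restored and the argument above goes through verbatim.
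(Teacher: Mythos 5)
Your proposal is correct and follows essentially the same route as the paper's proof: realize $\g_X$ by duality as the (central quotient of the) double cross sum $\d_+\dcs\d_-$ via Propositions~\ref{prop:diag-rel}, \ref{prop:serre-rel} and \ref{prop:db-rel}, identify it with $\g_X$ using the presentation of Theorem~\ref{thm:ass-18}, and conclude non--degeneracy from the fact that the kernel of the pairing is a graded ideal trivially intersecting $\fun{X}$. The only (cosmetic) difference is bookkeeping: the paper first quotients $\dtwo$ by the central ideal generated by $\xzp{\ia}-\xzm{\ia}$ and then kills the radical, whereas you keep the central copy $\zh$ in the Halbout-style form $\g\oplus\zh$ until the end; you also correctly flag the $S^1$ degeneration and the reliance on Theorem~\ref{thm:ass-18}, exactly as the paper does.
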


\begin{proof}
	First, let $\c$ be the ideal generated in $\dtwo$ by the elements 
	$\xzp{\ia}-\xzm{\ia}$, $\ia\in\intsf(X)$. It is clear that $\c$ is central
	in $\dtwo$, is a coideal, and moreover it is contained in the kernel
	of the pairing $\abf{\cdot}{\cdot}$ naturally extended to $\dtwo$. 
	Therefore, $\d\coloneqq\dtwo/\c$ is also Lie bialgebra endowed with
	a pairing, which we denote by $\abf{\cdot}{\cdot}_{\d}$.
	
	Set  $\xz{\ia}\coloneqq\frac{1}{2}(\xzp{\ia}+\xzm{\ia})$. In particular, 
	we have
	\begin{align}\label{eq:euler-d}
		\abf{\xz{\ia}}{\xz{\ib}}_{\d}=\rbf{\ia}{\ib}\ .
	\end{align}
	By Propositions \ref{prop:diag-rel}, \ref{prop:serre-rel}, and \ref{prop:db-rel},
	there is an obvious identification $\g_X=\d$ as Lie algebras (cf.\ Theorem \ref{thm:ass-18}).
	This allows to define a cobracket and possibly degenerate pairing on $\g_X$. 
	However, it follows from \eqref{eq:euler-d} that the kernel of  $\abf{\cdot}{\cdot}_{\d}$
	is a two--sided graded ideal, which trivially intersects $\fun{X}$. Therefore,
	by definition of $\g_X$, it must hold $\ker\abf{\cdot}{\cdot}_{\d}=0$. Therefore, 
	(1), (2), (3) follows directly from the identification $\g_X=\d$.
\end{proof}

From the proof above, we also deduce the following 

\begin{corollary}
	The Euler form \eqref{eq:Euler-form} on $\fun{X}$ uniquely extends to 
	a non--degenerate invariant symmetric bilinear form $\rbf{\cdot}{\cdot}\colon \wt{\g}_X \ten\wt{\g}_X \to\bsfld$
	defined on the generators as follows:
	\begin{align*}
		\rbf{\xz{\ia}}{\xz{\ib}}\coloneqq \rbf{\ia}{\ib}\ ,\quad 
		\rbf{\xpm{\ia}}{\xz{\ib}}\coloneqq 0\ ,\quad 
		\rbf{\xpm{\ia}}{\xpm{\ib}}\coloneqq 0\ ,\quad
		\rbf{\xp{\ia}}{\xm{\ib}}\coloneqq \drc{\ia\ib}\ .
	\end{align*}
	Moreover, $\r_X=\ker\rbf{\cdot}{\cdot}$, \ie $\ker\rbf{\cdot}{\cdot}$ is the maximal two--sided ideal trivially
	intersecting $\fun{X}$ and it is generated by the Serre relations from Theorem \ref{thm:ass-18}. 
\end{corollary}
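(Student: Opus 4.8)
The plan is to deduce the statement from the proof of Theorem~\ref{thm:cont-km-lba} together with a routine uniqueness argument. For \emph{existence}, recall that Theorem~\ref{thm:cont-km-lba}(1) equips $\g_X=\wt{\g}_X/\r_X$ with a non--degenerate invariant symmetric bilinear form $\ol{B}$ extending the Euler form and taking the stated values on the images of the generators. Pulling $\ol{B}$ back along the quotient map $\pi\colon\wt{\g}_X\twoheadrightarrow\g_X$ produces an invariant symmetric bilinear form $B\coloneqq\pi^\ast\ol{B}$ on $\wt{\g}_X$ whose values on $\fun{X}$ and on the $\xpm{\ia}$ are exactly the ones listed. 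Since $\ol{B}$ is non--degenerate and $\pi$ is onto, the radical of $B$ equals $\ker\pi=\r_X$; hence this form already satisfies $\ker B=\r_X$, and by Theorem~\ref{thm:ass-18} the ideal $\r_X$ is generated by the Serre relations.

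It remains to prove \emph{uniqueness}, that is, that $B$ is the only invariant symmetric bilinear form on $\wt{\g}_X$ prescribed on the generators. I would argue that the difference $D$ of two such forms is an invariant symmetric form vanishing on all pairs of generators, and show $D\equiv0$. Using the $\fun{X}$--gradation $\wt{\g}_X=\wt{\n}_+\oplus\fun{X}\oplus\wt{\n}_-$ and the non--degeneracy of the Euler form on $\fun{X}$ (which holds for every vertex space under the running assumption), invariance forces $D$ to vanish on $\wt{\g}_{\phi}\times\wt{\g}_{\psi}$ whenever $\phi+\psi\neq0$, and on $\wt{\g}_0\times\wt{\g}_0=\fun{X}\times\fun{X}$ by hypothesis. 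On each remaining pairing $\wt{\g}_\phi\times\wt{\g}_{-\phi}$, $\phi\in\fun{X}^+\setminus\{0\}$, one writes elements of $\wt{\n}_+$ as iterated brackets of the $\xp{\ia}$ and transfers the adjoint action across $D$ by invariance, simplifying the resulting commutators with the diagonal and double relations of $\wt{\g}_X$, until everything reduces to the vanishing values $D(\xp{\ia},\xm{\ib})=D(\xp{\ia},\xz{\ib})=0$. This bookkeeping --- ``an invariant form is determined by its values on a generating set in the graded triangular setting'' --- is where essentially all the (minor) work lies, and is the step I expect to be the main obstacle.

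Putting the two steps together, the form with the prescribed values on the generators exists and is unique, and the unique one, $B=\pi^\ast\ol{B}$, has radical exactly $\r_X$. In particular $\ker\rbf{\cdot}{\cdot}$ is a two--sided graded ideal trivially intersecting $\fun{X}$ --- equivalently, the maximal such ideal, since it coincides with $\r_X$ by its very definition --- and it is generated by the Serre relations of Theorem~\ref{thm:ass-18}. One can also see $\ker B\subseteq\r_X$ directly: $\ker B$ is a graded ideal, and $\ker B\cap\fun{X}=0$ because $\rbf{\cdot}{\cdot}$ restricts on $\fun{X}$ to the non--degenerate Euler form, exactly as in the footnote argument of Section~\ref{ss:km}; but this inclusion is already subsumed by the identification $B=\pi^\ast\ol{B}$.
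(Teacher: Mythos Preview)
Your argument is correct and matches the paper's intended approach: the Corollary is stated with the preamble ``From the proof above, we also deduce the following'', and the pullback $B=\pi^\ast\ol{B}$ along $\pi\colon\wt{\g}_X\twoheadrightarrow\g_X$ is precisely the natural way to extract it from Theorem~\ref{thm:cont-km-lba}(1), with $\ker B=\ker\pi=\r_X$ following immediately from the non--degeneracy of $\ol{B}$. Your explicit uniqueness argument (double induction on bracket length: first reduce $D(\xp{\ia},Y)$ to zero for all $Y\in\wt{\n}_-$ via the double relations, then reduce arbitrary $D(X,Y)$ by peeling off one $\xp{\ia}$ at a time using invariance and the grading) is a welcome addition that the paper leaves implicit; it goes through because $\wt{\n}_\pm$ are free and the double relation keeps $[\xp{\ia},\xm{\ib}]$ inside $\fun{X}\oplus\opspan\{\xp{\ic},\xm{\ic}\}$.
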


%-----------------------------------------------------------------------
\bigskip\section{Continuum quantum groups}\label{s:cont-qg}
%-----------------------------------------------------------------------

In this section we shall introduce the \emph{continuum quantum groups}, which provide a quantization of the continuum Kac--Moody algebras. We will see that they can be similarly realized as uncountable colimits of Drinfeld--Jimbo quantum groups. Finally, when the underlying vertex space is the line or the circle, they coincide with the line quantum group and the circle quantum group of \cite{sala-schiffmann-17}.

%-----------------------------------------------------------------------
\subsection{Definition of continuum quantum groups}
%-----------------------------------------------------------------------

Let $\calQ_X\coloneqq(\intsf(X), \sgpp,\sgpm,\abf{\cdot}{\cdot}, \rbf{\cdot}{\cdot})$ 
be a \emph{continuum quiver} with underlying vertex space $X$. In order to define the \emph{continuum quantum group}, we need to introduce some new operations on intervals. 

\begin{definition}
	We define the following partial operations on $\intsf(X)$:
	\begin{enumerate}\itemsep0.2cm
		\item the \emph{strict union of two non--orthogonal intervals $\ia$ and $\ib$}, whenever defined, 
		is the smallest interval $\iM{\ia}{\ib}\in\intsf(X)$ for which $(\iM{\ia}{\ib})\sgpm\ia$ 
		and $(\iM{\ia}{\ib})\sgpm\ib$ are both defined;
		\item the \emph{strict intersection of two non--orthogonal intervals $\ia$ and $\ib$}, whenever defined, 
		is the biggest interval $\im{\ia}{\ib}\in\intsf(X)$ for which $\ia\sgpm(\im{\ia}{\ib})$ and 
		$\ib\sgpm(\im{\ia}{\ib})$ are both defined.
	\end{enumerate}
\end{definition}

\begin{remark}
	Note that $\iM{\ia}{\ib}$ (resp. $\im{\ia}{\ib}$) is defined and coincides with $\ia\cup\ib$ 
	(resp. $\ia\cap\ib$) whenever it contains \emph{strictly} $\ia$ and $\ib$ (resp.\ it is contained 
	\emph{strictly} in $\ia$ and $\ib$). In particular, $\iM{}{}$ and $\im{}{}$ are clearly symmetric.
\end{remark}

\begin{remark}
	Let $X=\R$ and $\ia,\ib\in\intsf(\R)$.
	\begin{itemize}\itemsep0.2cm
		\item If $\ia\intnext\ib$, then $\iM{\ia}{\ib}=\ia\sgpp\ib$ and $\im{\ia}{\ib}$ is not defined.
		\item If $\ia\intcap\ib$, then  $\iM{\ia}{\ib}=\ia\cup\ib$ and $\im{\ia}{\ib}=\ia\cap\ib$. Moreover,
		\begin{align*}
			(\negthinspace(\iM{\ia}{\ib})\sgpm\ib)\sgpp(\im{\ia}{\ib})=\ia=(\iM{\ia}{\ib})\sgpm(\ib\sgpm(\im{\ia}{\ib}))
		\end{align*}
		\item If $\ia$ and $\ib$ are nested, then $\iM{\ia}{\ib}$ and $\im{\ia}{\ib}$ are not defined.
		\footnote{Recall that $\ia$ and $\ib$ are nested if they are orthogonal or one contained in the other.}
	\end{itemize}
\end{remark}

\begin{definition}\label{def:new-coeff}
	We shall use the following functions on $\intsf(\K)\times\intsf(\K)$:
	\begin{itemize}\itemsep0.3cm
		\item $\displaystyle \ca{\ia}{\ib}\coloneqq(-1)^{\abf{\ia}{\ib}}\rbf{\ia}{\ib}$;
		\item $\displaystyle \qcb{\ia}{\ib}{}\coloneqq\ca{\ia}{,\, \iM{\ia}{\ib}}$, which generalizes the function 
		$\qcb{\ia}{\ib}{}$ defined in \eqref{eq:ca-cb};
		\item $\displaystyle \qcc{\ia}{\ib}{+}\coloneqq\half\left(\ca{\ib}{,\,\ia\sgpm\ib}-1\right)$,
		and $\displaystyle \qcc{\ia}{\ib}{-}\coloneqq\half\left(\ca{\ib\sgpm\ia}{,\,\ia}+1\right)$;
		\item $\displaystyle \qcr{\ia}{\ib}{}\coloneqq(1-\drc{\ia\ib})(-1)^{\abf{\ia}{\ib}}\rbf{\ia}{\ib}^2$ ;
		\item $\displaystyle \qcs{\ia}{\ib}{\pm}\coloneqq\half\left(\ca{\ib}{,\, \ia\sgpp\ib}\pm1\right)$.
	\end{itemize}
\end{definition}

\begin{remark}\label{rmk:coefficient-table}
	Let $X=\K$, with $\K=\Q, \R$. We summarize below all possible values of the functions above.
	\begin{align*}
		\begin{array}{|c|c|c|c|c|c|c|c|c|c|c|}
			\hline
			& \ia\star\ib & \abf{\ia}{\ib} & \abf{\ib}{\ia} & \ca{\ia}{\ib} & \qcb{\ia}{\ib}{} & \qcc{\ia}{\ib}{+} & \qcc{\ia}{\ib}{-}  &\qcr{\ia}{\ib}{} & \qcs{\ia}{\ib}{+} & \qcs{\ia}{\ib}{-} \\
			\hline % concatenation
			(a)&\ia\intnext\ib &-1&\phantom{+}0&\phantom{+}1&\phantom{+}1&n.d.&n.d.&-1&\phantom{+}0&-1\\
			\hline % concatenation
			(b)&\ib\intnext\ia &\phantom{+}0&-1&-1&-1&n.d.&n.d.&\phantom{+}1&\phantom{+}1&\phantom{+}0\\
			\hline % overlapping
			(c)&\ia\intcap\ib &-1&\phantom{+}1&\phantom{+}0&\phantom{+}1&n.d.&n.d.&\phantom{+}0&n.d.&n.d.\\
			\hline % overlapping
			(d)&\ib\intcap\ia &\phantom{+}1&-1&\phantom{+}0&-1&n.d.&n.d.&\phantom{+}0&n.d.&n.d.\\
			\hline % perpendicular
			(e)&\ia\perp\ib &\phantom{+}0&\phantom{+}0&\phantom{+}0&n.d.&n.d.&n.d.&\phantom{+}0&n.d.&n.d.\\
			\hline % strictly contained
			(f)&\ia<\ib &\phantom{+}0&\phantom{+}0&\phantom{+}0&n.d.&n.d.&n.d.&\phantom{+}0&n.d.&n.d.\\
			\hline % strictly contained
			(g)&\ib<\ia &\phantom{+}0&\phantom{+}0&\phantom{+}0&n.d.&n.d.&n.d.&\phantom{+}0&n.d.&n.d.\\
			\hline % boundary contained (start on the left)
			(h)&\ia\rsub\ib &\phantom{+}0&\phantom{+}1&\phantom{+}1&n.d.&n.d.&\phantom{+}0&\phantom{+}1&n.d.&n.d.\\
			\hline % boundary contained (start on the right)
			(i)&\ia\lsub\ib &\phantom{+}1&\phantom{+}0&-1&n.d.&n.d.&\phantom{+}1&-1&n.d.&n.d.\\
			\hline % boundary contained (start on the left)
			(j)&\ib\rsub\ia &\phantom{+}1&\phantom{+}0&-1&n.d.&\phantom{+}0&n.d.&-1&n.d.&n.d.\\
			\hline % boundary contained (start on the right)
			(k)&\ib\lsub\ia &\phantom{+}0&\phantom{+}1&\phantom{+}1&n.d.&-1&n.d.&\phantom{+}1&n.d.&n.d.\\
			\hline
		\end{array}
	\end{align*}
\end{remark}

\begin{definition}\label{def:cont-qg}
	Let $\calQ_X$ be a continuum quiver. The \emph{continuum quantum group of $X$} 
	is the associative algebra $\bfU_q\g_X $ generated by $\fun{X}$ 
	and the elements $\qxpm{\ia}$, $\ia\in\intsf(\K)$, satisfying the following defining relations:
	\begin{enumerate}\itemsep0.2cm
		\item {\bf Diagonal action:} for any $\ia,\ib\in\intsf(X)$,
		\begin{align*}
			[\xz{\ia},\xz{\ib}]=0\qquad\mbox{and}\qquad [\xz{\ia},\qxpm{\ib}]=\pm\rbf{\ia}{\ib}\qxpm{\ib}\ .
		\end{align*}
		In particular, for $\qxz{\ia}{}\coloneqq\exp(\hbar/2\cdot\xz{\ia})$,
		it holds $\qxz{\ia}{}\qxpm{\ib}=q^{\pm\rbf{\ia}{\ib}}\cdot\qxpm{\ib}\qxz{\ia}{}$.
		
		\item {\bf Quantum double relations:} for any $\ia,\ib\in\intsf(X)$,
		\begin{align*} 
			[\qxp{\ia} ,\qxm{\ib} ] = &\drc{\ia, \ib}\frac{\qxz{\ia}{}-\qxz{\ia}{-1}}{q-q^{-1}}\\ &\qquad+\ca{\ia}{\ib}\cdot\left(q^{\qcc{\ia}{\ib}{+}}\, \qxp{\ia\sgpm\ib}
			\, \qxz{\ib}{\ca{\ia}{\ib}}-
			q^{\qcc{\ia}{\ib}{-}}\, \qxz{\ia}{\ca{\ia}{\ib}}\, \qxm{\ib\sgpm\ia}\right)\\
			&\qquad\qquad+\qcb{\ib}{\ia}{}\, q^{\qcb{\ib}{\ia}{}}\,(q-q^{-1})\,\qxp{(\iM{\ia}{\ib})\sgpm\ib}\,
			\qxz{\im{\ia\, }{\, \ib}}{\qcb{\ia}{\ib}{}}\,\qxm{(\iM{\ia}{\ib})\sgpm{\ia}}\ .
		\end{align*}
		\item {\bf Quantum Serre relations:} for any $(\ia,\ib)\in\serre{X}$,
		\begin{align*}
			\qxpm{\ia}\qxpm{\ib}-q^{\qcr{\ia}{\ib}{}}\cdot&\qxpm{\ib}\qxpm{\ia}=
			\pm\qcb{\ia}{\ib}{}\cdot q^{\qcs{\ia}{\ib}{\pm}}\cdot\qxpm{\ia\sgpp\ib}
			+\qcb{\ia}{\ib}{}\cdot(q-q^{-1})\cdot\qxpm{\iM{\ia}{\ib}}\qxpm{\im{\ia\, }{\,\ib}}\ .
		\end{align*}
	\end{enumerate}
	We assume that $\qxpm{\ia \odot \ib }=0$ whenever $\ia \odot \ib $ 
	is not defined, for $\odot=\sgpp,\sgpm, \iM{}{}, \im{}{}$, and the functions
	$\ca{}{}, \qcb{}{}{}, \qcc{}{}{}, \qcr{}{}{}, \qcs{}{}{}$ are those introduced of Definition~\ref{def:new-coeff}.
\end{definition}

\subsection{Colimit structure}\label{ss:q-colimit}
In analogy with Section \ref{ss:bkm-and-sgp}, we prove that the continuum quantum group 
$\bfU_q\g_X $ is \emph{covered} by an uncountable family of Drinfeld--Jimbo quantum groups.
Let $\calJ$ be an irreducible family of intervals in $\intsf(X)$ 
(cf.\ Section~\ref{ss:bkm-and-sgp}). We then consider two quantum algebras associated to $\calJ$:
\begin{itemize}\itemsep0.2cm
	\item the Drinfeld--Jimbo quantum group $\bfU_q\g_{\calJ}^{\scsop{BKM}}$ with Cartan matrix 
	$\sfA_{\calJ}=[\rbf{\ia}{\ib}]_{\ia,\ib\in\calJ}$\ ;
	\item the subalgebra $\bfU_q\g_{\calJ}$ generated in $\bfU_q\g_X $ by the elements
	$\{\xz{\ia}, \qxpm{\ia}\;|\; \ia\in\calJ\}$.
\end{itemize}

\begin{proposition}\label{prop:q-bkm-sgp}
	The assignment 
	\begin{align*}
		E_{\ia}\mapsto \qxp{\ia} \ , \quad F_{\ia}\mapsto \qxm{\ia}  \quad \text{and} \quad H_{\ia} \mapsto \xz{\ia}
	\end{align*}
	for any $\ia\in \calJ$, defines a surjective morphism of algebras $\Phi_\calJ\colon \bfU_q\g_{\calJ}^{\scsop{BKM}}\to \bfU_q\g_{\calJ}$.
\end{proposition}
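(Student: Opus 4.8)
The plan is to check that the images of the Chevalley generators satisfy the defining relations of $\bfU_q\g_{\calJ}^{\scsop{BKM}}$. Being the Drinfeld--Jimbo quantum group of the \emph{derived} Borcherds--Kac--Moody algebra $\g(\sfA_{\calJ})'$, the latter is presented by $E_\ia,F_\ia,H_\ia$ ($\ia\in\calJ$) subject to the relations of Section~\ref{ss:DJ} attached to the Cartan matrix $\sfA_{\calJ}=[\rbf{\ia}{\ib}]_{\ia,\ib\in\calJ}$; since $\sfD\sfA_{\calJ}$ is symmetric we may take $d_\ia=1$ and $q_\ia=q$, so that $\Phi_{\calJ}$ sends $\qxz{\ia}{}=\exp(\hbar/2\cdot H_\ia)$ to $\exp(\hbar/2\cdot\xz{\ia})$, the element $\qxz{\ia}{}$ of Definition~\ref{def:cont-qg}. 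Once $\Phi_{\calJ}$ is known to be a (continuous, $\hext{\C}$--linear) algebra morphism, surjectivity is immediate: by construction $\bfU_q\g_{\calJ}$ is generated, as a closed subalgebra of $\bfU_q\g_X$, by $\{\xz{\ia},\qxpm{\ia}\mid\ia\in\calJ\}$, which is exactly the image of the generating set.

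The diagonal relations of $\bfU_q\g_{\calJ}^{\scsop{BKM}}$ are the restriction to $\calJ$ of those of $\bfU_q\g_X$. For the quantum double relations, the point is that for $\ia,\ib\in\calJ$ all terms on the right--hand side of the quantum double relation of Definition~\ref{def:cont-qg} other than $\drc{\ia\ib}(\qxz{\ia}{}-\qxz{\ia}{-1})/(q-q^{-1})$ vanish. Indeed, if $\ia=\ib$ then $\ia\sgpm\ia$ and $\iM{\ia}{\ia}$ are undefined; and if $\ia\ne\ib$, irreducibility of $\calJ$ forces $\ia\sgpm\ib$ and $\ib\sgpm\ia$ to be undefined unless one of $\ia,\ib$ is a circle strictly containing the other, in which case $\rbf{\ia}{\ib}=0$ and hence $\ca{\ia}{\ib}=0$; moreover in every case at least one of $\iM{\ia}{\ib},\im{\ia}{\ib},(\iM{\ia}{\ib})\sgpm\ia,(\iM{\ia}{\ib})\sgpm\ib$ is undefined — when $\ia\sgpp\ib$ exists one has $\iM{\ia}{\ib}=\ia\sgpp\ib$ but $\im{\ia}{\ib}$ undefined, while otherwise $\iM{\ia}{\ib}$ itself is undefined — so the last term vanishes by the convention that a monomial in which some $\ia\odot\ib$ is undefined (inside a $\qxpm{}$-factor or inside a $K$-factor) is zero. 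Hence $[\qxp{\ia},\qxm{\ib}]=\drc{\ia\ib}(\qxz{\ia}{}-\qxz{\ia}{-1})/(q-q^{-1})$.

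For the quantum Serre relations: when $\rbf{\ia}{\ib}=0$ — that is, $\ia\perp\ib$, or one of $\ia,\ib$ a circle strictly containing the other — one has $(\ia,\ib)\in\serre{X}$, $\qcr{\ia}{\ib}{}=0$, and $\ia\sgpp\ib,\iM{\ia}{\ib},\im{\ia}{\ib}$ all undefined, so the quantum Serre relation of Definition~\ref{def:cont-qg} collapses to $[\qxpm{\ia},\qxpm{\ib}]=0$, as required. When $\ia$ is contractible (so $a_{\ia\ia}=2$) and $\ib\ne\ia$, one recovers the degree $1-\rbf{\ia}{\ib}\in\{2,3\}$ Drinfeld--Jimbo $q$--Serre relation by iterating the continuum quantum Serre relation exactly as in the classical Proposition~\ref{prop:bkm-sgp}: first $(\ia,\ib)\in\serre{X}$ rewrites $\qxpm{\ia\sgpp\ib}$ as a $q$--commutator of $\qxpm{\ia}$ and $\qxpm{\ib}$, and then one applies the relation for the pair $(\ia,\ia\sgpp\ib)\in\serre{X}$, using that $\ia\sgpp(\ia\sgpp\ib)$ is undefined and — when $\ia\sgpp\ib\simeq S^1$ — that $\qxpm{\ia}$ and $\qxpm{\ia\sgpp\ib}$ commute up to a power of $q$, by Remark~\ref{rem:serre-TQ}. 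Nothing needs to be checked when $\ia\simeq S^1$, since then $a_{\ia\ia}=0$ and $\bfU_q\g_{\calJ}^{\scsop{BKM}}$ has no Serre relation involving $E_\ia$ (or $F_\ia$) and a distinct generator.

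The main obstacle is the bookkeeping in this last step: one must verify that the powers of $q$ (the exponents $\qcr{\ia}{\ib}{}$, $\qcs{\ia}{\ib}{\pm}$) and the structure constants $\qcb{\ia}{\ib}{}$ produced along the iteration recombine into precisely the Drinfeld--Jimbo $q$--Serre polynomial of the appropriate degree — a direct but lengthy check that can be carried out case by case from the tabulated values in Remark~\ref{rmk:coefficient-table}. Some care is also needed with the conventions on undefined intervals (including inside $K$-factors) and, in the circle configurations, with the critical--point clause in the definition of $\serre{X}$; these are exactly what makes the case analysis above go through. Surjectivity, by contrast, requires no work.
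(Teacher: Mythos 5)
Your proposal takes exactly the route of the paper: the published proof is a two--sentence deferral ("applying the quantum Serre relations (3)\ldots one recovers the standard quantum Serre relations\ldots by mimicking the arguments of the proof of Proposition~\ref{prop:bkm-sgp}"), so your explicit verification of the diagonal and quantum double relations on $\calJ$ --- in particular the case analysis showing that irreducibility kills every extra term in relation (2) --- is actually more detailed than what the paper records, and is correct. The one place where your sketch overstates what the definitions give you is the Serre step in the configuration $\rbf{\ia}{\ib}=-2$ with $\ia,\ib$ contractible, i.e.\ $\ia\sgpp\ib\simeq S^1$: there the pair $(\ia,\ib)$ is \emph{not} in $\serre{X}$ (taking $\ia'=\ia$, $\ib'=\ib$ in the defining condition produces $\ia'\sgpp\ib'\simeq S^1$, which is excluded), and neither is $(\ib,\ia)$, so your ``first step'' --- rewriting $\qxpm{\ia\sgpp\ib}$ as a $q$--commutator of $\qxpm{\ia}$ and $\qxpm{\ib}$ via relation (3) for $(\ia,\ib)$ --- does not literally apply; in that case the degree--$3$ $q$--Serre relation has to be extracted by decomposing $\ib$ into subintervals not belonging to $\calJ$ and using relations (2)--(3) for those auxiliary pairs. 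You do flag ``circle configurations'' as delicate, but the assertion $(\ia,\ib)\in\serre{X}$ is false precisely there; since the paper's own proof leaves this same point implicit, it does not put your argument behind the published one, but it is the one sub-case where your stated iteration would need to be replaced by a different derivation.
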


\begin{proof}
	First, note that Proposition~\ref{prop:bkm-sgp} follows from the result above by setting $\hbar=0$.
	It is easy to check that, applying the quantum Serre relations (3) of Definition~\ref{def:cont-qg} corresponding to the elements $\qxpm{\ia}$, with $\ia\in \calJ$, 
	one recovers the \emph{standard} quantum Serre relations of the Drinfeld--Jimbo quantum group $\DJ{\g_{\calJ}^{\scsop{BKM}}}$ (cf.\ Section~\ref{ss:DJ}). 
	Thus, by mimicking the arguments of the proof of Proposition~\ref{prop:bkm-sgp}, the result follows.
\end{proof}

The following is straightforward.

\begin{corollary}\label{cor:q-bkm-sgp}
	Let $\calJ,\calJ'$ be two irreducible (finite) sets of intervals in $X$.
	\begin{enumerate}\itemsep0.2cm
		\item If $\calJ'\subseteq\calJ$, there is a canonical embedding
		$\phi'_{\calJ,\calJ'}\colon \bfU_q\g_{\calJ'}\to\bfU_q\g_{\calJ}$ sending $\qxpm{\ia} \mapsto \qxpm{\ia}$, 
		$\xz{\ia}\mapsto \xz{\ia}$, $\ia\in\calJ'$.
		\item If $\calJ$ is obtained from $\calJ'$ by replacing an element $\ic\in\calJ'$
		with two intervals $\ia ,\ib $ such that $\ic=\ia \sgpp \ib $,
		there is a canonical embedding $\phi''_{\calJ,\calJ'}\colon \bfU_q\g_{\calJ'}\to\bfU_q\g_{\calJ}$, 
		which is the identity on $\bfU_q\g_{\calJ'\setminus\{\ic\}}=\bfU_q\g_{\calJ\setminus\{\ia,\ib\}}$
		and sends
		\begin{align*}
			\xz{\ic} \mapsto \xz{\ia } +\xz{\ib } \ , \quad 
			\qxpm{\ic}  \mapsto 
			\mp\qcb{\ia}{\ib}{-1}\cdot q^{-\qcs{\ia}{\ib}{\pm}}\cdot
			\left(\qxpm{\ia}\qxpm{\ib}-q^{\qcr{\ia}{\ib}{}}\cdot\qxpm{\ib}\qxpm{\ia}\right)\ .
		\end{align*}
		\item The collection of embeddings $\phi'_{\calJ,\calJ'}, \phi''_{\calJ,\calJ'}$, 
		indexed by all possible irreducible sets of intervals in $X$, form a direct system. 
		Moreover, there is a canonical surjective morphism  of algebras
		\begin{align*}
			\colim_{\calJ}\, \bfU_q\g_{\calJ}^{\scsop{BKM}}\to \bfU_q\g(X)\ .
		\end{align*}
	\end{enumerate}
\end{corollary}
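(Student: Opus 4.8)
The plan is to run the argument of Corollary~\ref{cor:bkm-sgp} \emph{mutatis mutandis}, upgrading it along Proposition~\ref{prop:q-bkm-sgp}: the key observation is that $\phi'_{\calJ,\calJ'}$ and $\phi''_{\calJ,\calJ'}$ can both be realised as honest inclusions of subalgebras of the single fixed algebra $\bfU_q\g_X$, so that well-definedness and injectivity are automatic and only the formula they induce on generators must be identified.

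For (1): if $\calJ'\subseteq\calJ$ then $\calJ'$ is again irreducible and its generating set $\{\xz{\ia},\qxpm{\ia}\mid\ia\in\calJ'\}$ is contained in that of $\bfU_q\g_\calJ$; hence $\bfU_q\g_{\calJ'}\subseteq\bfU_q\g_\calJ$ as subalgebras of $\bfU_q\g_X$, and this inclusion is the sought $\phi'_{\calJ,\calJ'}$. Via $\Phi_{\calJ'}$ and $\Phi_\calJ$ it is compatible with the evident embedding $\bfU_q\g_{\calJ'}^{\scsop{BKM}}\hookrightarrow\bfU_q\g_\calJ^{\scsop{BKM}}$ attached to the principal submatrix $\sfA_{\calJ'}$ of $\sfA_\calJ$ (itself an embedding, by the triangular decomposition of Drinfeld--Jimbo quantum groups). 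For (2), write $\calJ=(\calJ'\setminus\{\ic\})\cup\{\ia,\ib\}$ with $\ic=\ia\sgpp\ib$. Since $\ia\sgpp\ib$ is defined, one checks from the definition of $\serre{X}$ that $(\ia,\ib)\in\serre{X}$ while $\im{\ia}{\ib}$ is undefined (compare the relevant line of the table in Remark~\ref{rmk:coefficient-table}), so that the quantum Serre relation of Definition~\ref{def:cont-qg}(3) for the pair $(\ia,\ib)$, read inside $\bfU_q\g_X$, collapses to
\begin{align*}
	\qxpm{\ia}\qxpm{\ib}-q^{\qcr{\ia}{\ib}{}}\,\qxpm{\ib}\qxpm{\ia}=\pm\qcb{\ia}{\ib}{}\,q^{\qcs{\ia}{\ib}{\pm}}\,\qxpm{\ic}\ .
\end{align*}
As $\qcb{\ia}{\ib}{}\in\{\pm1\}$, solving this for $\qxpm{\ic}$ (and recalling $\xz{\ic}=\xz{\ia}+\xz{\ib}$ in $\fun{X}$) exhibits $\bfU_q\g_{\calJ'}$ as the subalgebra of $\bfU_q\g_\calJ$ generated by $\bfU_q\g_{\calJ'\setminus\{\ic\}}=\bfU_q\g_{\calJ\setminus\{\ia,\ib\}}$ together with $\xz{\ia}+\xz{\ib}$ and the resulting quantum bracket; this inclusion is $\phi''_{\calJ,\calJ'}$, the formula in the statement being just this relation rewritten.

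For (3): since every $\phi'$ and $\phi''$ is an inclusion between subalgebras of $\bfU_q\g_X$, any composite of them is again such an inclusion and the coherence axioms of a direct system hold on the nose; that the index set of irreducible families, with the moves $\calJ'\subseteq\calJ$ and ``merge two adjacent intervals'', is filtered is the combinatorial fact established in \cite[Section~5.5]{appel-sala-schiffmann-18} and already used in Corollary~\ref{cor:bkm-sgp}. Equipping the family $\{\bfU_q\g_\calJ^{\scsop{BKM}}\}_\calJ$ with the corresponding structure maps — the principal-submatrix embedding for $\phi'$, and for $\phi''$ the map given on Chevalley generators by the formula of part (2) — makes $\{\Phi_\calJ\}_\calJ$ a morphism of direct systems, so that passing to colimits gives a canonical algebra map
\begin{align*}
	\colim_\calJ\bfU_q\g_\calJ^{\scsop{BKM}}\xrightarrow{\ \colim_\calJ\Phi_\calJ\ }\colim_\calJ\bfU_q\g_\calJ=\bigcup_\calJ\bfU_q\g_\calJ\hookrightarrow\bfU_q\g_X\ .
\end{align*}
This composite is surjective: each $\Phi_\calJ$ is (Proposition~\ref{prop:q-bkm-sgp}), and by the description of intervals in Remark~\ref{rem:int-X} every generator $\xz{\ia},\qxpm{\ia}$ of $\bfU_q\g_X$ arises from a suitable irreducible family $\calJ_\ia$ through iterated quantum Serre relations — exactly the argument proving $\g_X=\colim_\calJ\g_\calJ^{\scsop{BKM}}$ classically.

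The only genuinely non-formal work is the verification, for (2) and for the merging structure map on the Drinfeld--Jimbo side in (3), that the prescribed images satisfy the defining relations of $\bfU_q\g_{\calJ'}$ and of $\bfU_q\g_{\calJ'}^{\scsop{BKM}}$ respectively; this is a quantization of the classical computation in Corollary~\ref{cor:bkm-sgp}, and I expect the bookkeeping of the $q$-powers $\qcr{\ia}{\ib}{}$, $\qcs{\ia}{\ib}{\pm}$, $\qcb{\ia}{\ib}{}$ of Definition~\ref{def:new-coeff} through the quantum double and quantum Serre relations to be the main, if routine, obstacle.
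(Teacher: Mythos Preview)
Your proposal is correct and matches the paper's approach: the paper records no proof at all, merely the sentence ``The following is straightforward,'' and your elaboration is precisely the intended argument --- realise every $\bfU_q\g_\calJ$ as a subalgebra of the ambient $\bfU_q\g_X$, read the quantum Serre relation of Definition~\ref{def:cont-qg}(3) for the pair $(\ia,\ib)$ to recover the formula for $\phi''$, and then pass to the colimit exactly as in the classical Corollary~\ref{cor:bkm-sgp}. The only point worth flagging is your assertion that $(\ia,\ib)\in\serre{X}$ whenever $\ia\sgpp\ib$ is defined inside an irreducible family: this is not entirely automatic from the definition of $\serre{X}$ (consider the case $\ia\sgpp\ib\simeq S^1$), but the paper takes it for granted here just as it did in the classical case, deferring the combinatorics to \cite[Section~5.5]{appel-sala-schiffmann-18}.
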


\subsection{Comparison with the quantum group of the line}
We will now show that the continuum quantum groups of $\DJ{\g_X}$, $X=\R,S^1$, 
coincide with the quantum groups of the line and the circle introduced in \cite{sala-schiffmann-17}.
Let us first recall the definition of the line quantum group $\bfU_q\sl(\R)$.

\begin{definition}\label{def:q-sl(K)}
	Let $\K=\Q,\R$. The \emph{quantum group of the line} is the associative algebra  $\bfU_q\sl(\K)$
	generated over $\hext{\C}$ by elements $E_{\ia}, F_{\ia}, H_{\ia}$, with ${\ia}\in\intsf(\K)$, 
	with the following defining relations. Set $q\coloneqq\exp(\hbar/2)$ and $K_{\ia}\coloneqq\exp(\hbar/2\cdot H_{\ia})$.
	\begin{itemize}\itemsep0.3cm
		\item {\bf Kac--Moody type relations:} for any two intervals $\ia, \ib$,
		\begin{align}\label{eq:q-kac-rel-1}
			[H_{\ia} ,H_{\ib} ]=0, \quad [H_{\ia} , E_{\ib} ]=\rbf{{\ia}}{{\ib}}\, E_{\ib}, \quad
			[H_{\ia} , F_{\ib} ] &=-\rbf{{\ia}}{{\ib}}\, F_{\ib}, 
		\end{align}
		\begin{align}\label{eq:q-kac-rel-2}
			[E_{\ia} ,F_{\ib} ]=
			\begin{cases}
				\displaystyle \frac{K_{\ia}-K_{\ia}^{-1}}{q-q^{-1}}  & \text{if }  \ia=\ib\ ,\\[5pt]
				\phantom{K_{\ia}}0 & \text{if }  \ia\perp \ib, \ia\intnext \ib, \text{ or } \ib\intnext \ia\ ,
			\end{cases}
		\end{align}
		
		\item {\bf join relations:} for any two intervals $\ia , \ib $ with $\ia\intnext\ib$,
		\begin{align}
			H_{\ia \sgpp \ib }&=H_{\ia} + H_{\ib} \ ,\label{eq:q-Joining-h-finite}\\[3pt]
			E_{\ia \sgpp \ib }&=\phantom{+}q^{1/2}E_{\ia}E_{\ib}-q^{-1/2}E_{\ib}E_{\ia}\ , \label{eq:q-Joining-e-finite}\\[3pt]
			F_{\ia \sgpp \ib }&=-q^{1/2}F_{\ia}F_{\ib}+q^{-1/2}F_{\ib}F_{\ia}\ ; \label{eq:q-Joining-f-finite}
		\end{align}
		
		\item {\bf nest relations:} for any nested $\ia ,\ib \in\intsf(\K)$ (that is, such that	$\ia =\ib $, 
		$\ia \perp \ib $, $\ia <\ib $, $\ib <\ia $, $\ia \rsub \ib $, $\ia \lsub \ib $, $\ib \rsub \ia $, or $\ib \lsub \ia $),
		\begin{align}\label{eq:q-nest-finite}
			q^{\abf{\ia}{\ib}}E_{\ia}E_{\ib}=q^{\abf{\ib}{\ia}}E_{\ib}E_{\ia}\quad\text{and}\quad q^{\abf{\ia}{\ib}}F_{\ia}F_{\ib}=q^{\abf{\ib}{\ia}}F_{\ib}F_{\ia}\ .
		\end{align}
	\end{itemize}
\end{definition}

It follows, in particular, that 
\begin{align*}\label{eq:q-kac-rel-K}
	K_{\ia}K_{\ib}=K_{\ib}K_{\ia}, \quad K_{\ia}E_{\ib}=q^{\rbfcf{{\ia}}{{\ib}}}\, E_{\ib}K_{\ia}, \quad K_{\ia}F_{\ib}=q^{-\rbfcf{{\ia}}{{\ib}}}\, F_{\ib}K_{\ia}\ . 
\end{align*}

%\subsection{Serre presentation}\label{ss:cont-qg-line-serre}
As in the case of $\sl(\K)$, the Cartan subalgebra of $\bfU_q\sl(\K)$, namely $\bfU_q\h\coloneqq\langle H_{\ia}\;\vert\;\ia\in\intsf(\K)\rangle$, 
is canonically isomorphic to the symmetric algebra $\hext{S\fun{\K}}$ generated by the characteristic functions 
$\{\xz{\ia} \coloneqq\cf{\ia}\;|\;\ia\in\intsf(\K)\}$.  

We have the following:
\begin{proposition}\label{prop:q-slR-presentation}
	There is an isomorphism of algebras $\bfU_q\g_\K\to\bfU_q\sl(\K)$ given by 
	\begin{align*}
		\qxp{\ia} \mapsto q^{\half}E_{\ia}\ ,\quad
		\qxm{\ia} \mapsto q^{-\half}F_{\ia}\ ,\quad
		\xz{\ia}\mapsto H_{\ia}\ ,
	\end{align*}
	with $\ia\in\intsf(\K)$.	
	%	
	%	 generated by $\fun{\K}$ and the elements
	%	$\qxp{\ia}\coloneqq q^{\half}E_{\ia}$, $\qxm{\ia} \coloneqq q^{-\half}F_{\ia}$, $\ia\in\intsf(\K)$, 
	%	satisfying the following complete set of relations.
	%	\begin{enumerate}
	%		\item {\bf Diagonal action:}
	%		\begin{align*}
	%		[\xz{\ia},\xz{\ib}]=0\qquad\mbox{and}\qquad [\xz{\ia},\qxpm{\ib}]=\pm\rbf{\ia}{\ib}\qxpm{\ib}\ .
	%		\end{align*}
	%		In particular, for $\qxz{\ia}{}\coloneqq\exp(\hbar/2\cdot\xz{\ia})$,
	%		it holds $\qxz{\ia}{}\qxpm{\ib}=q^{\pm\rbf{\ia}{\ib}}\cdot\qxpm{\ib}\qxz{\ia}{}$\ .
	%		\item {\bf Quantum double relations:} 
	%		\begin{align*} 
	%		[\qxp{\ia} ,\qxm{\ib} ] = \drc{\ia, \ib}\frac{\qxz{\ia}{}-\qxz{\ia}{-1}}{q-q^{-1}} +\ca{\ia}{\ib}&\cdot\left(q^{\qcc{\ia}{\ib}{+}}\qxp{\ia\sgpm\ib}\qxz{\ib}{\ca{\ia}{\ib}}-
	%		q^{\qcc{\ia}{\ib}{-}}\qxz{\ia}{\ca{\ia}{\ib}}\qxm{\ib\sgpm\ia}\right)\\
	%		&+\qcb{\ib}{\ia}{}q^{\qcb{\ib}{\ia}{}}(q-q^{-1})\qxp{(\iM{\ia}{\ib})\sgpm\ib}
	%		\qxz{\im{\ia}{\ib}}{\qcb{\ia}{\ib}{}}\qxm{(\iM{\ia}{\ib})\sgpm{\ia}}\ .
	%		\end{align*}
	%		\item {\bf Quantum Serre relations:} 
	%		\begin{align*}
	%		\qxpm{\ia}\qxpm{\ib}-q^{\qcr{\ia}{\ib}{}}\cdot\qxpm{\ib}\qxpm{\ia}= 
	%		\pm\qcb{\ia}{\ib}{}\cdot q^{\qcs{\ia}{\ib}{\pm}}\cdot\qxpm{\ia\sgpp\ib}
	%		+\qcb{\ia}{\ib}{}\cdot(q-q^{-1})\cdot\qxpm{\iM{\ia}{\ib}}\qxpm{\im{\ia}{\ib}}\ .
	%		\end{align*}
	%	\end{enumerate}
	%	We assume that $\qxpm{\ia \odot \ib }=0$ whenever $\ia \odot \ib $ 
	%	is not defined, for $\odot=\sgpp,\sgpm, \iM{}{}, \im{}{}$.
\end{proposition}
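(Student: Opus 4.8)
The plan is to verify that the stated assignment is a well-defined algebra homomorphism, that it is invertible, and that the specialization $q=1$ recovers the isomorphism $\sl(\K)\simeq\g_\K$ of Proposition~\ref{prop:slR-presentation}. The key simplification is that for $X=\K$ the operations $\iM{}{}$ and $\im{}{}$ are never \emph{strict}: from Remark~\ref{rmk:coefficient-table}, in every case where $\qcb{\ia}{\ib}{}$ is defined (namely $\ia\intnext\ib$, $\ib\intnext\ia$, $\ia\intcap\ib$, $\ib\intcap\ia$), either $\iM{\ia}{\ib}=\ia\sgpp\ib$ with $\im{\ia}{\ib}$ undefined, or $\iM{\ia}{\ib}=\ia\cup\ib$ with both $\iM{\ia}{\ib}\sgpm\ia$ and $\iM{\ia}{\ib}\sgpm\ib$ \emph{nested} subintervals of $\ia$ and $\ib$, and in every other relative position both $\iM{\ia}{\ib}$ and $\im{\ia}{\ib}$ are undefined (so the corresponding generators vanish by convention). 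Thus the defining relations of $\bfU_q\g_\K$ collapse: the last summand in the quantum double relation is nonzero only when $\ia\intcap\ib$ or $\ib\intcap\ia$, and the last summand in the quantum Serre relation is nonzero only when $\iM{\ia}{\ib}$ strictly contains $\ia,\ib$, i.e.\ again only in the overlapping case.

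First I would rescale: setting $\qxp{\ia}=q^{1/2}E_\ia$, $\qxm{\ia}=q^{-1/2}F_\ia$, $\xz{\ia}=H_\ia$, so that $\qxz{\ia}{}=K_\ia$, I would rewrite each of the three families of relations of Definition~\ref{def:cont-qg} after substitution and check, case by case along the rows $(a)$--$(k)$ of the table in Remark~\ref{rmk:coefficient-table}, that they are consequences of relations \eqref{eq:q-kac-rel-1}--\eqref{eq:q-nest-finite} of Definition~\ref{def:q-sl(K)}. The diagonal action is immediate. For the quantum double relations, the diagonal term $\ia=\ib$ gives exactly \eqref{eq:q-kac-rel-2}; the adjacency cases $\ia\intnext\ib$, $\ib\intnext\ia$ reproduce \eqref{eq:q-Joining-e-finite}--\eqref{eq:q-Joining-f-finite} after using $[E_\ia,F_\ib]=0$ (one of the zero cases of \eqref{eq:q-kac-rel-2}) and the explicit values $\ca{\ia}{\ib},\qcc{\ia}{\ib}{\pm}$ from the table; the overlapping cases $\ia\intcap\ib$, $\ib\intcap\ia$ require expressing $[E_\ia,F_\ib]$ in terms of $E_{\iM{\ia}{\ib}\sgpm\ib}=E_{\ib\sgpm(\im{\ia}{\ib})}$ etc., which follows from the join relations \eqref{eq:q-Joining-e-finite}--\eqref{eq:q-Joining-f-finite} applied to the decompositions in the overlapping-case identities of the third Remark of Section~\ref{s:cont-qg} together with \eqref{eq:q-nest-finite}; and the remaining positions ($\ia\perp\ib$, nested) give $[E_\ia,F_\ib]=0$, matching the vanishing of the right-hand side. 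For the quantum Serre relations, the non-overlapping nested cases reduce to the $q$-commutation \eqref{eq:q-nest-finite} (the right-hand side vanishes since $\ia\sgpp\ib$ and $\iM{\ia}{\ib}$ are then undefined), while the cases with $\ia\sgpp\ib$ defined or $\ia\intcap\ib$ follow from the join relations and \eqref{eq:q-nest-finite}; one must also check the cases where $\ia$ or $\ib$ is non-contractible, but over $\K=\Q,\R$ there are none, so $\serre{\K}=\intsf(\K)\times\intsf(\K)$ and all pairs are covered.

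Conversely I would check that the relations \eqref{eq:q-kac-rel-1}--\eqref{eq:q-nest-finite} hold in $\bfU_q\g_\K$ under the inverse assignment $E_\ia\mapsto q^{-1/2}\qxp{\ia}$, $F_\ia\mapsto q^{1/2}\qxm{\ia}$, $H_\ia\mapsto\xz{\ia}$: this is the same case analysis read backwards. One delicate point worth isolating is \textbf{consistency of the join relations with antisymmetry and with iterated concatenation} — i.e.\ that $E_{\ia\sgpp\ib}$ is unambiguously defined independent of how one decomposes an interval into adjacent elementary pieces; this is exactly the coassociativity-type compatibility already verified in \cite{sala-schiffmann-17} for $\bfU_q\sl(\K)$ and in \cite{appel-sala-schiffmann-18} (Proposition~\ref{prop:slR-presentation}) at $q=1$, so I would invoke those rather than redo them. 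The main obstacle is bookkeeping the powers of $q$: one has to confirm that the exponents $\qcc{\ia}{\ib}{\pm}$, $\qcr{\ia}{\ib}{}$, $\qcs{\ia}{\ib}{\pm}$ defined in Definition~\ref{def:new-coeff}, once evaluated via the table, produce precisely the half-integer powers $\pm1/2$ appearing in \eqref{eq:q-Joining-e-finite}--\eqref{eq:q-Joining-f-finite} and the integer exponents in \eqref{eq:q-nest-finite}, with all signs $\ca{\ia}{\ib}$, $\qcb{\ia}{\ib}{}\in\{\pm1\}$ matching; this is purely computational but is where an error would hide. Finally, setting $\hbar=0$ (so $q=1$, $K_\ia\to1$ and $(K_\ia-K_\ia^{-1})/(q-q^{-1})\to H_\ia$) turns the isomorphism into the classical one $\sl(\K)\to\g_\K$ of Proposition~\ref{prop:slR-presentation}, and since $\bfU_q\g_\K$ is the evident $\hbar$-adic flat deformation, the map is automatically a topological Hopf algebra isomorphism.
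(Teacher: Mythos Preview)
Your plan is the paper's plan: verify both directions of the assignment by running through the relative positions $(a)$--$(k)$ of Remark~\ref{rmk:coefficient-table} and checking that each relation of Definition~\ref{def:cont-qg} matches one of Definition~\ref{def:q-sl(K)} after the rescaling $\qxp{\ia}=q^{1/2}E_\ia$, $\qxm{\ia}=q^{-1/2}F_\ia$. The paper carries out exactly this case analysis in full detail.

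One correction to your sketch: you write that in the quantum double relation the adjacency cases $\ia\intnext\ib$, $\ib\intnext\ia$ ``reproduce \eqref{eq:q-Joining-e-finite}--\eqref{eq:q-Joining-f-finite}''. They do not. When $\ia$ and $\ib$ are adjacent, $\ia\sgpm\ib$, $\ib\sgpm\ia$ and $\im{\ia}{\ib}$ are all undefined, so relation~(2) of Definition~\ref{def:cont-qg} collapses to $[\qxp{\ia},\qxm{\ib}]=0$, which is one of the zero cases of \eqref{eq:q-kac-rel-2}. The join relations \eqref{eq:q-Joining-e-finite}--\eqref{eq:q-Joining-f-finite} instead come from the quantum \emph{Serre} relations~(3) in the adjacency cases (rows $(a)$ and $(b)$). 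Conversely, in the other direction the nontrivial content of the quantum double relations lies in the subinterval cases $(h)$--$(k)$ and the overlapping cases $(c)$,$(d)$, and the paper derives each of these by first expanding $E_\ia$ or $F_\ib$ via the join relation and then applying $[E,F]$ to the pieces. Your identification $(\iM{\ia}{\ib})\sgpm\ib=\ib\sgpm(\im{\ia}{\ib})$ is also a slip: it is $\ia\sgpm(\im{\ia}{\ib})$. Finally, the appeal to ``consistency of iterated concatenation'' is unnecessary here --- the proof is a finite relation-by-relation check and does not require that auxiliary well-definedness statement --- and the claim about Hopf structure at the end overshoots the proposition, which is purely about algebras (the Hopf structure is Theorem~\ref{thm:cont-qg-bia}).
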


\begin{proof}
	First, we show that the relations (1)--(3) from Definition~\ref{def:cont-qg} imply those from Definition~\ref{def:q-sl(K)}.
	
	\subsubsection{}
	The Kac--Moody relations \eqref{eq:q-kac-rel-1} and \eqref{eq:q-kac-rel-2} follow 
	immediately from (1) and (2), respectively.
	The join relation \eqref{eq:q-Joining-h-finite} is automatic, while \eqref{eq:q-Joining-e-finite} 
	and \eqref{eq:q-Joining-f-finite} follow from (3). Namely, if $\ia\intnext\ib$, then $\iM{\ia}{\ib}=
	\ia\sgpp\ib$, and $\im{\ia}{\ib}$ is not defined (therefore the last summand on the RHS of (3) does 
	not appear) and
	\begin{align*}
		\qcr{\ia}{\ib}{}=- 1\ ,\quad 
		\qcb{\ia}{\ib}{}= 1\ ,\quad 
		\qcs{\ia}{\ib}{+}=0\ , \quad
		\qcs{\ia}{\ib}{-}=-1\ .
	\end{align*}
	So that (3) reads $\qxp{\ia}\qxp{\ib}-q^{-1}\qxp{\ib}\qxp{\ia}= \qxp{\ia\sgpp\ib}$ (resp.
	$\qxm{\ia}\qxm{\ib}-q^{-1}\qxm{\ib}\qxm{\ia}=-q^{-1}\qxm{\ia\sgpp\ib}$).
	Then, since $\qxp{\ia} = q^{\half}E_{\ia}$ and $\qxm{\ia} = q^{-\half}F_{\ia}$, one has
	\begin{align*}
		qE_{\ia}E_{\ib}-E_{\ib}E_{\ia}= q^{\half}E_{\ia\sgpp\ib}  \quad\mbox{and}\quad
		q^{-1}F_{\ia}F_{\ib}-q^{-2}F_{\ib}F_{\ia}=-q^{-\frac{3}{2}}F_{\ia\sgpp\ib}\ ,
	\end{align*}
	which corresponds to \eqref{eq:q-Joining-e-finite} and \eqref{eq:q-Joining-f-finite}, respectively.
	Assume now that $\ia$ and $\ib$ are nested and $\ia\neq\ib$, so that $\ia\sgpp\ib$, $\iM{\ia}{\ib}$ and $\im{\ia}{\ib}$
	are not defined, and (3) reduces to $\qxpm{\ia}\qxpm{\ib}=
	q^{\qcr{\ia}{\ib}{}}\qxpm{\ib}\qxpm{\ia}$. Then, \eqref{eq:q-nest-finite} 
	follows by observing that, in case of nested intervals, $\qcr{\ia}{\ib}{}
	\coloneqq(-1)^{\abf{\ia}{\ib}}\rbf{\ia}{\ib}^2=\abf{\ib}{\ia}-\abf{\ia}{\ib}$, 
	as one checks easily from the last seven rows (e--k) of the table 
	\ref{rmk:coefficient-table} above.
	%\begin{align*}
	%\begin{array}{|c|c|c|c|c|c|c|c|}
	%\hline
	%& \ia\star\ib & \abf{\ia}{\ib} & \abf{\ib}{\ia} & \qcr{\ia}{\ib}{} \\
	%\hline % concatenation
	%(a)&\ia\intnext\ib &-1&\phantom{+}0&-1\\
	%\hline % concatenation
	%(b)&\ib\intnext\ia &\phantom{+}0&-1&\phantom{+}1\\
	%\hline % overlapping
	%(c)&\ia\intcap\ib &-1&\phantom{+}1&\phantom{+}0\\
	%\hline % overlapping
	%(d)&\ib\intcap\ia &\phantom{+}1&-1&\phantom{+}0\\
	%\hline % perpendicular
	%(e)&\ia\perp\ib &\phantom{+}0&\phantom{+}0&\phantom{+}0\\
	%\hline % strictly contained
	%(f)&\ia<\ib &\phantom{+}0&\phantom{+}0&\phantom{+}0\\
	%\hline % strictly contained
	%(g)&\ib<\ia &\phantom{+}0&\phantom{+}0&\phantom{+}0\\
	%\hline % boundary contained (start on the left)
	%(h)&\ia\rsub\ib &\phantom{+}0&\phantom{+}1&\phantom{+}1\\
	%\hline % boundary contained (start on the right)
	%(i)&\ia\lsub\ib &\phantom{+}1&\phantom{+}0&-1\\
	%\hline % boundary contained (start on the left)
	%(j)&\ib\rsub\ia &\phantom{+}1&\phantom{+}0&-1\\
	%\hline % boundary contained (start on the right)
	%(k)&\ib\lsub\ia &\phantom{+}0&\phantom{+}1&\phantom{+}1\\
	%\hline
	%\end{array}
	%\end{align*}
	
	\subsubsection{}
	Conversely, we shall show that the relations (1)--(3) holds in the algebra $\bfU_q\sl(\K)$.
	(1) follows from  \eqref{eq:q-kac-rel-1}. 
	By the previous discussion, (3) holds for the cases (a) and (e--k) listed in the table \ref{rmk:coefficient-table}. 
	It remains to prove it holds in the cases (b--d).
	
	\begin{itemize}[leftmargin=1em]\itemsep0.5cm
		\item {\it Case (b): $\ib\intnext\ia$.} 
		From \eqref{eq:q-Joining-e-finite} and \eqref{eq:q-Joining-f-finite}, we get
		\begin{align*}
			q^{\half}E_{\ib}E_{\ia}-q^{-\half}E_{\ia}E_{\ib}=E_{\ia\sgpp\ib}\ , \qquad
			q^{\half}F_{\ib}F_{\ia}-q^{-\half}F_{\ia}F_{\ib}=-F_{\ia\sgpp\ib}\ .
		\end{align*}
		Then, by $\qxp{\ia} = q^{\half}E_{\ia}$ and $\qxm{\ia} = q^{-\half}F_{\ia}$, we get
		\begin{align*}
			q^{-\frac{3}{2}}\qxp{\ia}\qxp{\ib}-q^{-\half}\qxp{\ib}\qxp{\ia}&=-q^{-\half}\qxp{\ia\sgpp\ib}\ ,\\
			%\qquad
			q^{\half}\qxm{\ia}\qxm{\ib}-q^{\frac{3}{2}}\qxm{\ib}\qxm{\ia}&=q^{\half}\qxm{\ia\sgpp\ib}\ ,
		\end{align*}
		Finally, we get
		\begin{align*}
			\qxp{\ia}\qxp{\ib}-q\qxp{\ib}\qxp{\ia}=-q\qxp{\ia\sgpp\ib}\ ,
			\qquad
			\qxm{\ia}\qxm{\ib}-q\qxm{\ib}\qxm{\ia}=\qxm{\ia\sgpp\ib}\ ,
		\end{align*}
		which agrees with (3), since for $\ib\intnext\ia$ we have $\qcr{\ia}{\ib}{}=1$,
		$\qcb{\ia}{\ib}{}=-1$, $\qcs{\ia}{\ib}{+}=1$, and $\qcs{\ia}{\ib}{-}=0$.\\
		
		\item {\it Case (c): $\ia\intcap\ib$.} 
		Note that, in this case, $\iM{\ia}{\ib}$ and $\im{\ia}{\ib}$ are
		both defined, $\qcr{\ia}{\ib}{}=0$, $\qcb{\ia}{\ib}{}=1$, 
		and (3) reads
		\begin{align*}
			\qxpm{\ia}\qxpm{\ib}-\qxpm{\ib}\qxpm{\ia}=(q-q^{-1})\qxpm{\iM{\ia}{\ib}}\qxpm{\im{\ia\,}{\,\ib}}\ .
		\end{align*}
		
		Set $a=\ia\sgpm(\im{\ia}{\ib})$, $b=\ia\sgpm(\im{\ia}{\ib})$,
		and $c=\im{\ia}{\ib}$. Thus, $\ia=a\sgpp c$ with $a\intnext c$, 
		$\ib=c\sgpp b$ with $c\intnext b$, and $\iM{\ia}{\ib}=\ia\sgpp b$ with
		$\ia\intnext b$. Since $c\lsub\ia$ and $\ia\intnext b$, we have
		\begin{align*}
			E_{\ib}=q^{\half}E_{c}E_{b}-q^{-\half}E_{b}E_{c}\ ,
			\qquad
			E_{\ia}E_{c}=qE_{c}E_{\ia}\ ,\\
			%\qquad
			E_{\ia}E_{b}=q^{-\half}E_{\iM{\ia}{\ib}}+q^{-1}E_{b}E_{\ia}\ .
		\end{align*}
		Therefore,
		\begin{align*}
			E_{\ia}E_{\ib}&=E_{\ia}\left(q^{\half}E_{c}E_{b}-q^{-\half}E_{b}E_{c}\right)\\
			&=q^{\frac{3}{2}}E_c\left(q^{-\half}E_{\iM{\ia}{\ib}}+q^{-1}E_{b}E_{\ia}\right)-
			q^{-\half}\left(q^{-\half}E_{\iM{\ia}{\ib}}+q^{-1}E_{b}E_{\ia}\right)E_c\\
			&=qE_{c}E_{\iM{\ia}{\ib}}+q^{\half}E_cE_bE_{\ia}
			-q^{-1}E_{\iM{\ia}{\ib}}E_c-q^{-\frac{1}{2}}E_bE_cE_{\ia} \ .
		\end{align*}
		Since $c=\im{\ia}{\ib}<\iM{\ia}{\ib}$, we get
		\begin{align*}
			E_{\ia}E_{\ib}&=E_{\ib}E_{\ia}+(q-q^{-1})E_{\iM{\ia}{\ib}}E_{\im{\ia\,}{\,\ib}}\ ,
		\end{align*}
		which agrees with (3) under the identification $\qxp{\ia}=q^{\half}E_{\ia}$.
		Similarly,
		\begin{align*}
			F_{\ib}=-q^{\half}F_{c}F_{b}+q^{-\half}F_{b}F_{c}\ ,
			\qquad
			F_{\ia}F_{c}=qF_{c}F_{\ia}\ ,\\
			%\qquad
			F_{\ia}F_{b}=-q^{-\half}F_{\iM{\ia}{\ib}}+q^{-1}F_{b}F_{\ia}\ .
		\end{align*}
		Therefore,
		\begin{align*}
			F_{\ia}&F_{\ib}=F_{\ia}\left(-q^{\half}F_{c}F_{b}+q^{-\half}F_{b}F_{c}\right)\\
			&=-q^{\frac{3}{2}}F_c\left(-q^{-\half}F_{\iM{\ia}{\ib}}+q^{-1}F_{b}F_{\ia}\right)
			+q^{-\half}\left(-q^{-\half}F_{\iM{\ia}{\ib}}+q^{-1}F_{b}F_{\ia}\right)F_c\\
			&=qF_cF_{\iM{\ia}{\ib}}-q^{\half}F_cF_bF_{\ia}
			-q^{-1}F_{\iM{\ia}{\ib}}F_c+q^{-\frac{1}{2}}F_bF_cF_{\ia}\\
			&=F_{\ib}F_{\ia}+(q-q^{-1})F_{\iM{\ia}{\ib}}F_{\im{\ia\, }{\, \ib}} \ ,
		\end{align*}
		which agrees with (3) under the identification $\qxm{\ia}=q^{-\half}F_{\ia}$.
		
		\item {\it Case (d): $\ib\intcap\ia$.}
		In this case, $\qcr{\ia}{\ib}{}=0$, $\qcb{\ia}{\ib}{}=-1$, 
		and (3) reads
		\begin{align*}
			\qxpm{\ia}\qxpm{\ib}-\qxpm{\ib}\qxpm{\ia}=-(q-q^{-1})\qxpm{\iM{\ia}{\ib}}\qxpm{\im{\ia\,}{\,\ib}}\ .
		\end{align*}
		Thus, $\ia=c\sgpp a$ with $c\intnext a$, 
		$\ib=b\sgpp c$ with $b\intnext c$, and $\iM{\ia}{\ib}=b\sgpp \ia$ with
		$b\intnext \ia$. Since $c\rsub\ia$ and $b\intnext\ia$, we have
		\begin{align*}
			E_{\ib}=q^{\half}E_{b}E_{c}-q^{-\half}E_{c}E_{b}\ ,
			\qquad
			E_{\ia}E_{c}=q^{-1}E_{c}E_{\ia}\ ,\\
			%\qquad
			E_{\ia}E_{b}=-q^{\half}E_{\iM{\ia}{\ib}}+qE_{b}E_{\ia}\ .
		\end{align*}
		Therefore,
		\begin{align*}
			E_{\ia}E_{\ib}&=E_{\ia}\left(q^{\half}E_{b}E_{c}-q^{-\half}E_{c}E_{b}\right)\\
			&=q^{\half}\left(-q^{\half}E_{\iM{\ia}{\ib}}+qE_{b}E_{\ia}\right)E_c
			-q^{-\frac{3}{2}}E_c\left(-q^{\half}E_{\iM{\ia}{\ib}}+qE_{b}E_{\ia}\right)\\
			&=-qE_{\iM{\ia}{\ib}}E_c+q^{\half}E_bE_cE_{\ia}+q^{-1}E_{c}E_{\iM{\ia}{\ib}}-q^{-\half}E_cE_bE_{\ia}\\
			&=E_{\ib}E_{\ia}-(q-q^{-1})E_{\iM{\ia}{\ib}}E_{\im{\ia\, }{\, \ib}}\ ,
		\end{align*}
		which agrees with (3). Similarly,
		\begin{align*}
			F_{\ib}=-q^{\half}F_{b}F_{c}+q^{-\half}F_{c}F_{b}\ ,
			\qquad
			F_{\ia}F_{c}=q^{-1}F_{c}F_{\ia}\ ,\\
			%\qquad
			F_{\ia}F_{b}=q^{\half}F_{\iM{\ia}{\ib}}+qF_{b}F_{\ia}\ .
		\end{align*}
		Therefore,
		\begin{align*}
			F_{\ia}F_{\ib}&=F_{\ia}\left(-q^{\half}F_{b}F_{c}+q^{-\half}F_{c}F_{b}\right)\\
			&=-q^{\half}\left(q^{\half}F_{\iM{\ia}{\ib}}+qF_{b}F_{\ia}\right)F_c
			+q^{-\frac{3}{2}}F_c\left(q^{\half}F_{\iM{\ia}{\ib}}+qF_{b}F_{\ia}\right)\\
			&=-qF_{\iM{\ia}{\ib}}F_c-q^{\half}F_bF_cF_{\ia}
			+q^{-1}F_cF_{\iM{\ia}{\ib}}+q^{-\half}F_cF_bF_{\ia}\\
			&=F_{\ib}F_{\ia}-(q-q^{-1})F_{\iM{\ia}{\ib}}F_{\im{\ia\,}{\,\ib}}\ ,
		\end{align*}
		which agrees with (3).
	\end{itemize}
	
	\subsubsection{} We now show that relations (2) hold in $\bfU_q\sl(\K)$.
	This is clear for the cases (a), (b), (e) in the table \ref{rmk:coefficient-table}. We should prove 
	it for all remaining cases. We start with the cases of a \emph{boundary}
	subinterval (rows h--k).
	
	\begin{itemize}[leftmargin=1em]\itemsep0.5cm
		
		\item {\it Case (h): $\ia\rsub\ib$.}
		In this case, we have $\ca{\ia}{\ib}=1$ and $\qcc{\ia}{\ib}{-}=0$, so that (2) reads
		\begin{align*}[\qxp{\ia},\qxm{\ib}]=-\qxz{\ia}{}\qxm{\ib\sgpm\ia}\ .\end{align*}
		Set $\ic=\ib\sgpm\ia$. Thus, $\ib=\ia\sgpp\ic$ with $\ia\intnext\ic$.
		We have 
		\begin{align*}
			F_{\ib}=-q^{\half}F_{\ia}F_{\ic}+q^{-\half}F_{\ic}F_{\ia}\ ,
			\quad
			[E_{\ia}, F_{\ic}]=0\ ,\\
			%\quad
			F_{\ic}\qxz{\ia}{}=q^{-1}\qxz{\ia}{}F_{\ic}\ ,
			\quad
			F_{\ic}\qxz{\ia}{-1}=q\qxz{\ia}{-1}F_{\ic}\ .
		\end{align*} 
		Therefore,
		\begin{align*}
			[E_{\ia}, F_{\ib}]&=-q^{\half}[E_{\ia},F_{\ia}]F_{\ic}+q^{-\half}F_{\ic}[E_{\ia},F_{\ia}]\\
			&=-q^{\half}\frac{\qxz{\ia}{}-\qxz{\ia}{-1}}{q-q^{-1}}F_{\ic}
			+q^{-\half}F_{\ic}\frac{\qxz{\ia}{}-\qxz{\ia}{-1}}{q-q^{-1}}\\
			&=\left(-q^{\half}\frac{\qxz{\ia}{}-\qxz{\ia}{-1}}{q-q^{-1}}
			+q^{-\half}\frac{q^{-1}\qxz{\ia}{}-q\qxz{\ia}{-1}}{q-q^{-1}}\right)F_{\ic}\\
			&=\frac{-q^{\half}+q^{-\frac{3}{2}}}{q-q^{-1}}\qxz{\ia}{}F_{\ic}
			=-q^{-\half}\frac{q-q^{-1}}{q-q^{-1}}\qxz{\ia}{}F_{\ic}=-q^{-\half}\qxz{\ia}{}F_{\ib\sgpm\ia}\ ,
		\end{align*}
		and we get $[\qxp{\ia}, \qxm{\ib}]=-\qxz{\ia}{}\qxm{\ib\sgpm\ia}$.
		
		\item {\it Case (i): $\ia\lsub\ib$.}
		
		In this case, we have $\ca{\ia}{\ib}=-1$ and $\qcc{\ia}{\ib}{-}=1$, so that (2) reads
		\begin{align*}
			[\qxp{\ia},\qxm{\ib}]=q\qxz{\ia}{-1}\qxm{\ib\sgpm\ia}\ .
		\end{align*}
		Set $\ic=\ib\sgpm\ia$. Thus, $\ib=\ia\sgpp\ic$ with $\ic\intnext\ia$.
		We have 
		\begin{align*}
			F_{\ib}=-q^{\half}F_{\ic}F_{\ia}+q^{-\half}F_{\ia}F_{\ic}\ ,
			\quad
			[E_{\ia}, F_{\ic}]=0\ ,\\
			%\quad
			F_{\ic}\qxz{\ia}{}=q^{-1}\qxz{\ia}{}F_{\ic}\ ,
			\quad
			F_{\ic}\qxz{\ia}{-1}=q\qxz{\ia}{-1}F_{\ic}\ .
		\end{align*} 
		Therefore,
		\begin{align*}
			[E_{\ia}, F_{\ib}]&=-q^{\half}F_{\ic}[E_{\ia},F_{\ia}]+q^{-\half}[E_{\ia},F_{\ia}]F_{\ic}\\
			&=-q^{\half}F_{\ic}\frac{\qxz{\ia}{}-\qxz{\ia}{-1}}{q-q^{-1}}
			+q^{-\half}\frac{\qxz{\ia}{}-\qxz{\ia}{-1}}{q-q^{-1}}F_{\ic}\\	
			&=\left(-q^{\half}\frac{q^{-1}\qxz{\ia}{}-q\qxz{\ia}{-1}}{q-q^{-1}}
			+q^{-\half}\frac{\qxz{\ia}{}-\qxz{\ia}{-1}}{q-q^{-1}}\right)F_{\ic}\\
			&=\frac{q^{\frac{3}{2}}-q^{-\half}}{q-q^{-1}}\qxz{\ia}{-1}F_{\ic}=
			q^{\half}\qxz{\ia}{-1}F_{\ib\sgpm\ia}\ ,
		\end{align*}
		and we get $[\qxp{\ia}, \qxm{\ib}]=q\qxz{\ia}{-1}\qxm{\ib\sgpm\ia}
		=\qxm{\ib\sgpm\ia}\qxz{\ia}{-1}$.
		
		\item {\it Case (j): $\ib\rsub\ia$.}
		
		In this case, we have $\ca{\ia}{\ib}=-1$ and $\qcc{\ia}{\ib}{+}=0$, so that (2) reads
		\begin{align*}
			[\qxp{\ia},\qxm{\ib}]=-\qxp{\ia\sgpm\ib}\qxz{\ib}{-1}\ .
		\end{align*}
		Set $\ic=\ia\sgpm\ib$. Thus, $\ia=\ib\sgpp\ic$ with $\ib\intnext\ic$.
		We have 
		\begin{align*}
			E_{\ia}=q^{\half}E_{\ib}E_{\ic}-q^{-\half}E_{\ic}E_{\ib}\ ,
			\quad
			[E_{\ic}, F_{\ib}]=0\ ,\\
			%\quad
			\qxz{\ib}{}E_{\ic}=q^{-1}E_{\ic}\qxz{\ib}{}\ ,
			\quad
			\qxz{\ib}{-1}E_{\ic}=qE_{\ic}\qxz{\ib}{-1}\ .
		\end{align*} 
		Therefore,
		\begin{align*}
			[E_{\ia}, F_{\ib}]&=q^{\half}[E_{\ib},F_{\ib}]E_{\ic}-q^{-\half}E_{\ic}[E_{\ib},F_{\ib}]\\
			&=q^{\half}\frac{\qxz{\ib}{}-\qxz{\ib}{-1}}{q-q^{-1}}E_{\ic}
			-q^{-\half}E_{\ic}\frac{\qxz{\ib}{}-\qxz{\ib}{-1}}{q-q^{-1}}\\
			&=E_{\ic}\left(q^{\half}\frac{q^{-1}\qxz{\ib}{}-q\qxz{\ib}{-1}}{q-q^{-1}}
			-q^{-\half}\frac{\qxz{\ib}{}-\qxz{\ib}{-1}}{q-q^{-1}}\right)\\
			&=E_{\ic}\qxz{\ib}{-1}\frac{-q^{\frac{3}{2}}+q^{-\half}}{q-q^{-1}}
			=-q^{\half}E_{\ic}\qxz{\ib}{-1}
		\end{align*}
		and we get $[\qxp{\ia}, \qxm{\ib}]=-\qxp{\ia\sgpm\ib}\qxz{\ib}{-1}
		=-q^{-1}\qxz{\ib}{-1}\qxp{\ia\sgpm\ib}$.
		
		\item {\it Case (k): $\ib\lsub\ia$.}	
		In this case, we have $\ca{\ia}{\ib}=1$ and $\qcc{\ia}{\ib}{+}=-1$, so that (2) reads
		\begin{align*}[\qxp{\ia},\qxm{\ib}]=q^{-1}\qxp{\ia\sgpm\ib}\qxz{\ib}{}\ .\end{align*}
		Set $\ic=\ia\sgpm\ib$. Thus, $\ia=\ic\sgpp\ib$ with $\ic\intnext\ib$.
		We have 
		\begin{align*}
			E_{\ia}=q^{\half}E_{\ic}E_{\ib}-q^{-\half}E_{\ib}E_{\ic}\ ,
			\quad
			[E_{\ic}, F_{\ib}]=0\ ,\\
			%\quad
			\qxz{\ib}{}E_{\ic}=q^{-1}E_{\ic}\qxz{\ib}{}\ ,
			\quad
			\qxz{\ib}{-1}E_{\ic}=qE_{\ic}\qxz{\ib}{-1}\ .
		\end{align*} 
		Therefore,
		\begin{align*}
			[E_{\ia}, F_{\ib}]&=q^{\half}E_{\ic}[E_{\ib},F_{\ib}]-q^{-\half}[E_{\ib},F_{\ib}]E_{\ic}\\
			&=q^{\half}E_{\ic}\frac{\qxz{\ib}{}-\qxz{\ib}{-1}}{q-q^{-1}}
			-q^{-\half}\frac{\qxz{\ib}{}-\qxz{\ib}{-1}}{q-q^{-1}}E_{\ic}\\	
			&=E_{\ic}\left(q^{\half}\frac{\qxz{\ib}{}-\qxz{\ib}{-1}}{q-q^{-1}}
			-q^{-\half}\frac{q^{-1}\qxz{\ib}{}-q\qxz{\ib}{-1}}{q-q^{-1}}\right)\\
			&=E_{\ic}\qxz{\ib}{}\frac{q^{\half}-q^{-\frac{3}{2}}}{q-q^{-1}}=q^{-\half}E_{\ic}\qxz{\ib}{}
		\end{align*}
		and we get $[\qxp{\ia}, \qxm{\ib}]=q^{-1}\qxp{\ia\sgpm\ib}\qxz{\ib}{}=\qxz{\ib}{}\qxp{\ia\sgpm\ib}$.
		
		\item {\it Case (c): $\ia\intcap\ib$.}
		In this case, we have $\qcb{\ia}{\ib}{}=1$ and $\qcb{\ib}{\ia}{}=-1$, so that (2) reads
		\begin{align*}
			[\qxp{\ia},\qxm{\ib}]=-q^{-1}(q-q^{-1})\qxp{(\iM{\ia}{\ib})\sgpm\ib}
			\qxz{\im{\ia\,}{\,\ib}}{}\qxm{(\iM{\ia}{\ib})\sgpm\ia}\ .
		\end{align*}
		Set $c=\im{\ia}{\ib}$, $a=\ia\sgpm c$, $b=\ib\sgpm c$. 
		Thus, $\ia=a\sgpp c$ with $a\intnext c$, $\ib=c\sgpp b$ with $c\intnext b$,
		and $c\rsub \ib$.
		Therefore, 
		\begin{align*}
			E_{\ia}=q^{\half}E_{a}E_{c}-q^{-\half}E_{c}E_{a}\ ,
			\quad
			[E_{a}, F_{\ib}]=0\ ,\\
			%\quad
			[E_{c}, F_{\ib}]=-q^{-\half}\qxz{c}{}F_{b} \ ,
			\quad
			\qxz{c}{}E_a=q^{-1}E_a\qxz{c}{}\ ,
		\end{align*}
		and we have
		\begin{align*}
			[E_{\ia}, F_{\ib}]&=[q^{\half}E_{a}E_{c}-q^{-\half}E_{c}E_{a}, F_{\ib}]\\
			&=q^{\half}E_{a}[E_{c}, F_{\ib}]-q^{-\half}[E_{c}, F_{\ib}]E_{a}\\
			&=q^{\half}E_{a}\left(-q^{-\half}\qxz{c}{}F_{b}\right)
			-q^{-\half}\left(-q^{-\half}\qxz{c}{}F_{b}\right)E_{a}\\
			&=-E_a\qxz{c}{}F_b+q^{-2}E_a\qxz{c}{}F_b=-(1-q^{-2})E_a\qxz{c}{}F_b\ .
		\end{align*}
		Therefore, $[\qxp{\ia},\qxm{\ib}]=
		-q^{-1}(q-q^{-1})\qxp{(\iM{\ia}{\ib})\sgpm\ib}\qxz{\im{\ia\,}{\,\ib}}{}\qxm{(\iM{\ia}{\ib})\sgpm\ia}$.
		
		\item {\it Case (d): $\ib\intcap\ia$.}
		In this case, we have $\qcb{\ia}{\ib}{}=-1$ and $\qcb{\ib}{\ia}{}=1$, so that (2) reads
		\begin{align*}
			[\qxp{\ia},\qxm{\ib}]=q(q-q^{-1})\qxp{(\iM{\ia}{\ib})\sgpm\ib}
			\qxz{\im{\ia\,}{\,\ib}}{-1}\qxm{(\iM{\ia}{\ib})\sgpm\ia}\ .
		\end{align*}
		Set $c=\im{\ia}{\ib}$, $a=\ia\sgpm c$, $b=\ib\sgpm c$. 
		Thus, $\ia=c\sgpp a$ with $c\intnext a$, $\ib=b\sgpp c$ with $b\intnext c$, 
		and $c\lsub\ib$.
		Therefore, 
		\begin{align*}
			E_{\ia}=q^{\half}E_{c}E_{a}-q^{-\half}E_{a}E_{c}\ ,
			\quad
			[E_{a}, F_{\ib}]=0\ ,\\
			%\quad
			[E_{c}, F_{\ib}]=q^{\half}\qxz{c}{-1}F_{b}\ ,
			\quad
			\qxz{c}{-1}E_{a}=qE_{a}\qxz{c}{-1}\ ,
		\end{align*}
		and we have
		\begin{align*}
			[E_{\ia}, F_{\ib}]&=[q^{\half}E_{c}E_{a}-q^{-\half}E_{a}E_{c}, F_{\ib}]\\
			&=q^{\half}[E_{c},F_{\ib}]E_{a}-q^{-\half}E_{a}[E_{c},F_{\ib}]\\
			&=q^{\half}\left(q^{\half}\qxz{c}{-1}F_{b}\right)E_{a}
			-q^{-\half}E_{a}\left(q^{\half}\qxz{c}{-1}F_{b}\right)\\
			&=(q^2-1)E_a\qxz{c}{-1}F_{b}\ .
		\end{align*}
		Therefore, $[\qxp{\ia},\qxm{\ib}]=
		q(q-q^{-1})\qxp{(\iM{\ia}{\ib})\sgpm\ib}\qxz{\im{\ia\,}{\,\ib}}{-1}\qxm{(\iM{\ia}{\ib})\sgpm\ia}$.

		\item {\it Case (f): $\ia<\ib$.}
		Note that, in this case, $\ib\sgpm\ia$, $\ia\sgpm\ib$, 
		$\iM{\ia}{\ib}$, $\im{\ia}{\ib}$ are not defined. Let $b,b''$
		be the two connected components of $\ib\setminus\ia$, so that
		$\ib=b\sgpp b'$ with $b'=\ia\sgpp b''$, $b\intnext b'$, $b\intnext\ia$
		and $\ia\rsub b'$.
		Then, 
		\begin{align*}
			F_{\ib}=-q^{\half}F_{b}F_{b'}+q^{-\half}F_{b'}F_{b}\ ,
			\quad
			[E_{\ia}, F_{b}]=0\ ,\\
			%\quad
			[E_{\ia}, F_{b'}]=-q^{-\half}\qxz{\ia}{}F_{b''}\ ,
			F_b\qxz{\ia}{}=q^{-1}\qxz{\ia}{}F_b\ .
		\end{align*}
		and we get
		\begin{align*}
			[E_{\ia},F_{\ib}]&=[E_{\ia}, -q^{\half}F_{b}F_{b'}+q^{-\half}F_{b'}F_{b}]\\	
			&=-q^{\half}F_{b}[E_{\ia},F_{b'}]+q^{-\half}[E_{\ia},F_{b'}]F_{b}\\
			&=-q^{\half}F_{b}\left(-q^{-\half}\qxz{\ia}{}F_{b''}\right)+
			q^{-\half}\left(-q^{-\half}\qxz{\ia}{}F_{b''}\right)F_{b}\\
			&=q^{-1}\qxz{\ia}{}F_{b}F_{b''}-q^{-1}\qxz{\ia}{}F_{b''}F_{b}=0\ ,
		\end{align*}
		where the last equality follows from \eqref{eq:q-nest-finite}, since $b\perp b''$
		and therefore $F_{b}F_{b''}=F_{b''}F_{b}$. Thus, we get $[\qxp{\ia},\qxm{\ib}]=0$.
		
		\item {\it Case (g): $\ib<\ia$.}
		Note that, in this case, $\ib\sgpm\ia$, $\ia\sgpm\ib$, 
		$\iM{\ia}{\ib}$, $\im{\ia}{\ib}$ are not defined. Let $a,a''$
		be the two connected components of $\ia\setminus\ib$, so that
		$\ia=a\sgpp a'$ with $a'=\ib\sgpp a''$, $a\intnext a'$, $a\intnext\ib$
		and $\ib\rsub a'$.
		Then, 
		\begin{align*}
			E_{\ia}=q^{\half}E_{a}E_{a'}-q^{-\half}E_{a'}E_{a}\ ,
			\quad
			[E_{a}, F_{\ib}]=0\ ,\\
			%\quad
			[E_{a'}, F_{\ib}]=-q^{\half}E_{a''}\qxz{\ib}{-1}\ ,
			\quad
			\qxz{\ib}{-1}E_a=qE_a\qxz{\ib}{-1}\ .
		\end{align*}
		and we get
		\begin{align*}
			[E_{\ia},F_{\ib}]&=[q^{\half}E_{a}E_{a'}-q^{-\half}E_{a'}E_{a}, F_{\ib}]\\
			&=q^{\half}E_{a}[E_{a'},F_{\ib}]-q^{-\half}[E_{a'},F_{\ib}]E_{a}\\
			&=q^{\half}E_{a}\left(-q^{\half}E_{a''}\qxz{\ib}{-1}\right)
			-q^{-\half}\left(-q^{\half}E_{a''}\qxz{\ib}{-1}\right)E_{a}\\
			&=-qE_{a}E_{a''}\qxz{\ib}{-1}+qE_{a''}E_{a}\qxz{\ib}{-1}=0\ ,
		\end{align*}
		where the last equality follows from \eqref{eq:q-nest-finite}, since $a\perp a''$
		and therefore $E_{b}E_{b''}=E_{b''}E_{b}$. Thus, we get $[\qxp{\ia},\qxm{\ib}]=0$.
	\end{itemize}
\end{proof}

%\bigskip\section{The quantum continuum $R$--matrix}

%-----------------------------------------------------------------------
\subsection{Quasi--triangular bialgebra structure}\label{s:cont-qg-bia}
%-----------------------------------------------------------------------

We now prove the second main result of the paper. Namely, we show that
the continuum quantum group $\DJ{\g_X}$ is naturally endowed with a topological 
quasi--triangular Hopf algebra structure, quantizing the topological quasi--triangular 
Lie bialgebra $\g_X$. 

More precisely, we prove the following.

\begin{theorem}\label{thm:cont-qg-bia}
	Let $\calQ_X$ be a continuum quiver and $\bfU_q\g_X $ the corresponding 
	continuum quantum group. 
	\begin{enumerate}\itemsep0.2cm
		\item
		The algebra $\DJ{\g_X}$ is a topological Hopf algebra with respect to the maps
		$\Delta\colon\DJ{\g_X}\to\DJ{\g_X}\wh{\ten}\DJ{\g_X}$ and $\varepsilon\colon \DJ{\g_X}\to\hext{\C}$ defined on the generators
		by $\varepsilon(\xz{\ia})\coloneqq 0\coloneqq \varepsilon(\qxpm{\ia})$, $\Delta(\xz{\ia})\coloneqq \xz{\ia}\ten1+1\ten\xz{\ia}$, and
		\begin{align*}
			\Delta(\qxp{\ia}) &\coloneqq \qxp{\ia}\otimes 1+ K_\ia \otimes \qxp{\ia}+\sum_{\ia=\ib\sgpp\ic}\, \ca{\ic}{, \,\ib\sgpp\ic} \, \qcs{\ib}{\ic}{-}\cdot q^{-1}(q-q^{-1})\, \qxp{\ib} K_\ic\otimes \qxp{\ic}\ ,\\[3pt]
			\Delta(\qxm{\ia}) &\coloneqq 1\otimes \qxm{\ia}+ \qxm{\ia}\otimes K_\ia^{-1}-\sum_{\ia=\ib\sgpp\ic}\, \ca{\ic}{, \,\ib\sgpp\ic} \, \qcs{\ib}{\ic}{-}\cdot (q-q^{-1})\, \qxm{\ic} \otimes \qxm{\ib} K_\ic^{-1}\ .
		\end{align*}
		In particular, $\varepsilon(\qxz{\ia}{})=1$ and $\Delta(\qxz{\ia}{})=\qxz{\ia}{}\ten\qxz{\ia}{}$.
		As usual, the antipode is given by the formula
		\begin{align*}
			S\coloneqq \sum_n m^{(n)}\circ(\id-\iota\circ\varepsilon)^{\ten n}\circ\Delta^{(n)}\ ,
		\end{align*}
		where $m^{(n)}$ and $\Delta^{(n)}$ denote the iterated product and coproduct, respectively.
		\item 
		Denote by $\DJ{{\b_X^{\pm}}}$ the Hopf subalgebras generated by 
		$\fun{X}$ and $\qxpm{\ia}$, $\ia\in\intsf(X)$. Then, there exists a unique Hopf pairing 
		\begin{align*}
			\rbf{\cdot}{\cdot}\colon \DJ{{\b}_X^+}\ten(\DJ{{\b}_X^-})^{\scsop{cop}}\to\C(\negthinspace(\hbar)\negthinspace)\ ,
		\end{align*}
		defined on the generators by
		\begin{align*}
			\rbf{1}{1}\coloneqq 1\ , \quad 
			\rbf{\xz{\ia}}{\xz{\ib}} \coloneqq \frac{1}{\hbar}\rbf{\ia}{\ib}\ , \quad
			\rbf{\qxp{\ia}}{\qxm{\ib}}\coloneqq \frac{\delta_{\ia\ib}}{q-q^{-1}}\ ,
		\end{align*}
		and zero otherwise. In particular, $\rbf{\qxz{\ia}{}}{\qxz{\ib}{}}=q^{\rbf{\ia}{\ib}}$.
		\item Through the Hopf pairing $\rbf{\cdot}{\cdot}$, the Hopf algebras 
		$(\DJ{\b_X^{+}}, \DJ{\b_X^{-}})$ form a match pair. Therefore, $\DJ{\g_X}$ 
		can be realized as a quotient of the double cross product Hopf algebra
		$\DJ{{\b}_X^+}\dcs\DJ{\b_X^-}$ obtained by identifying the two copies of $\fun{X}$.
		In particular, $\DJ{\g_X}$ is a topological quasi--triangular Hopf algebra. 
		\item
		The topological quasi--triangular Hopf algebra $\DJ{\g_X}$ is a quantization of the 
		topological quasi--triangular Lie bialgebra $\g_X$. 
	\end{enumerate}
\end{theorem}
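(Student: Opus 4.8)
The plan is to mirror, at the quantum level, the duality construction of Section~\ref{s:cont-km-lba}, replacing the free Lie algebras $\FL{\pm}$ by topological free associative $\hext{\C}$--algebras $\calF_{\pm}$ generated by $\fun{X}$ and the symbols $\qxpm{\ia}$, $\ia\in\intsf(X)$, equipped with the coproducts dictated by the formulas of part~(1) and the counit $\varepsilon$. The first task is a quantum analogue of Proposition~\ref{prop:halb}: one checks that these coproducts are coassociative, so that $\calF_{\pm}$ become topological bialgebras, and that the displayed recursive formula $S=\sum_n m^{(n)}\circ(\id-\iota\circ\varepsilon)^{\ten n}\circ\Delta^{(n)}$ defines an antipode --- the sum converges $\hbar$--adically by a degree count over $\fun{X}^+$, since every summand with sufficiently many tensor factors necessarily contains a factor of $\fun{X}$--degree $0$ lying in $\ker\varepsilon$, hence a positive power of $\hbar$. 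Coassociativity is the combinatorial heart of part~(1): iterating the sum over decompositions $\ia=\ib\sgpp\ic$, it reduces to quadratic identities among the coefficients $\ca{\ic}{,\,\ib\sgpp\ic}$ and $\qcs{\ib}{\ic}{-}$, which follow from the concatenation--compatibility of the Euler form (Remark~\ref{rem:euler-form-identities}) exactly as the co--Jacobi identity did classically.

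Next I would define the Hopf pairing on the free level, $\rbf{\cdot}{\cdot}\colon\calF_+\ten\calF_-^{\scsop{cop}}\to\C(\negthinspace(\hbar)\negthinspace)$, by the values on generators of part~(2) together with the requirement that it intertwines product with coproduct; existence, uniqueness, and the pairing identities are then formal. One then verifies, exactly as in Propositions~\ref{prop:diag-rel} and~\ref{prop:serre-rel}, that the two--sided ideals $\mathfrak{i}_{\pm}$ generated by the quantum diagonal relations and the ideals $\mathfrak{s}_{\pm}$ generated by the quantum Serre elements are Hopf ideals which are orthogonal to $\calF_{\mp}$. Orthogonality of the diagonal relations is immediate from the generator values; for the Serre elements one computes $\rbf{s^{\pm}_{\ia\ib}}{-}$ against the generators of $\calF_{\mp}$ using the Hopf pairing axiom $\rbf{xy}{z}=\rbf{x\ten y}{\Delta(z)}$, as in the classical proof. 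The delicate point is the coideal property of $\mathfrak{s}_{\pm}$: one must show that $\Delta$ of a quantum Serre element lands in $\mathfrak{s}_{\pm}\ten\calF_{\pm}+\calF_{\pm}\ten\mathfrak{s}_{\pm}$, which now involves the additional term $\qxpm{\iM{\ia}{\ib}}\qxpm{\im{\ia\,}{\,\ib}}$ and the coefficients $\qcb{}{}{},\qcr{}{}{},\qcs{}{}{\pm}$; the required identities are read off from the continuum analogue of the table in Remark~\ref{rmk:coefficient-table}.

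With this in hand, set $\DJ{\b_X^{\pm}}\coloneqq\calF_{\pm}/(\mathfrak{i}_{\pm}+\mathfrak{s}_{\pm})$; these are topological Hopf algebras and the induced pairing $\rbf{\cdot}{\cdot}\colon\DJ{\b_X^+}\ten(\DJ{\b_X^-})^{\scsop{cop}}\to\C(\negthinspace(\hbar)\negthinspace)$ is non--degenerate because its reduction modulo $\hbar$ recovers the non--degenerate pairing of Theorem~\ref{thm:cont-km-lba}(3) on the classical Borels (the case $X=S^1$ being handled, as in Section~\ref{s:cont-km-lba}, by adjoining to $\fun{S^1}$ the derivation dual to $\xz{S^1}$). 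By Majid's matched--pair machinery, a non--degenerate Hopf pairing makes $(\DJ{\b_X^+},\DJ{\b_X^-})$ a matched pair of Hopf algebras, yielding the double cross product $\DJ{\b_X^+}\dcs\DJ{\b_X^-}$ together with its canonical quasi--triangular structure. A quantum analogue of Proposition~\ref{prop:db-rel} --- obtained by computing the mutual coadjoint actions $\sfad^\ast(\qxp{\ia})(\qxm{\ib})$ and $\sfad^\ast(\qxm{\ib})(\qxp{\ia})$ against the coproduct formulas --- then shows that the full quantum double relations of Definition~\ref{def:cont-qg}(2), including the term $\qcb{\ib}{\ia}{}q^{\qcb{\ib}{\ia}{}}(q-q^{-1})\qxp{(\iM{\ia}{\ib})\sgpm\ib}\qxz{\im{\ia\,}{\,\ib}}{\qcb{\ia}{\ib}{}}\qxm{(\iM{\ia}{\ib})\sgpm\ia}$, hold in $\DJ{\b_X^+}\dcs\DJ{\b_X^-}$ modulo the central ideal identifying the two copies of $\fun{X}$. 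Hence the quotient is, as an algebra, exactly $\DJ{\g_X}$ by the presentation of Definition~\ref{def:cont-qg}, and it inherits the Hopf and quasi--triangular structures; this settles (1)--(3).

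Finally, for~(4) one reduces modulo $\hbar$. Since $q-q^{-1}\in\hbar\,\C[[\hbar]]$, the correction sums in the coproducts vanish mod $\hbar$ and $\Delta$ becomes the primitive coproduct; the quantum double and quantum Serre relations reduce, using $\tfrac{\qxz{\ia}{}-\qxz{\ia}{-1}}{q-q^{-1}}\to\xz{\ia}$ as $\hbar\to0$, to the defining relations of Theorem~\ref{thm:ass-18}, so $\DJ{\g_X}/\hbar\DJ{\g_X}\simeq\bfU\g_X$ as Hopf algebras. A direct computation of $\hbar^{-1}(\Delta-\Delta^{21})$ on lifts of the generators $\xpm{\ia}$, using $\qxz{\ia}{}=1+\tfrac{\hbar}{2}\xz{\ia}+O(\hbar^2)$, recovers precisely the cobracket~\eqref{eq:cobracket-g(X)}, establishing that $\DJ{\g_X}$ quantizes $\g_X$. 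I expect the main obstacle to be the two purely combinatorial verifications --- coassociativity of the coproducts and the coideal property of the quantum Serre ideals --- because the continuum setting forces one to keep track of the strict--union and strict--intersection terms $\iM{}{}$, $\im{}{}$, which have no counterpart in the Borcherds--Kac--Moody case; everything else is a faithful quantization of the arguments already carried out in Sections~\ref{ss:KM-halb} and~\ref{s:cont-km-lba}.
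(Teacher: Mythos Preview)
Your strategy is essentially the paper's own: free topological bialgebras $\calH_{\pm}$ with the prescribed coproducts, a Hopf pairing defined on generators, orthogonality and coideal verification for the diagonal and quantum Serre ideals, passage to the quotients $\DJ{\b_X^{\pm}}$, and the matched--pair / double--cross--product construction recovering the quantum double relations. The paper presents this as a schematic proof and leaves the same combinatorial verifications (coassociativity, coideal property) to the reader, so you are on target.

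There is one genuine point of divergence, and it is the weakest link in your outline: non--degeneracy of the Hopf pairing on $\DJ{\b_X^{\pm}}$. You argue that it follows from non--degeneracy of the reduction modulo $\hbar$, invoking Theorem~\ref{thm:cont-km-lba}(3). But this requires knowing in advance that $\DJ{\b_X^{\pm}}/\hbar\,\DJ{\b_X^{\pm}}\simeq\bfU\b_X^{\pm}$ and that $\DJ{\b_X^{\pm}}$ is topologically free over $\hext{\C}$; the first is part of what you are trying to prove in~(4), and the second is not addressed. A~priori the image of the quantum ideal modulo $\hbar$ could be strictly larger than the classical ideal, so the semiclassical limit could be a proper quotient of $\bfU\b_X^{\pm}$, and your Nakayama--style lift would not go through. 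The paper sidesteps this by a different route: for every irreducible set $\calJ$ the map $\DJ{\g_{\calJ}^{\scsop{BKM}}}\to\DJ{\g_X}$ of Section~\ref{ss:q-colimit} preserves the pairing, and the restriction to each $\DJ{\b_{\calJ}^{\pm}}$ is the standard Lusztig pairing, known to be non--degenerate. Since every element of $\DJ{\b_X^{\pm}}$ lies in the image of some such map, non--degeneracy follows directly, with no appeal to the semiclassical limit. This also yields, as a corollary, that the colimit map of Corollary~\ref{cor:q-bkm-sgp} is an isomorphism. If you want to keep your reduction--mod--$\hbar$ argument, you would need an independent proof of topological freeness; otherwise, replace it with the paper's colimit argument.
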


The strategy of the proof is essentially identical to that of Theorem~\ref{thm:cont-km-lba}
and consists in showing that the continuum quantum group $\DJ{\g_X}$ can be equivalently
realized by duality. This is obtained by considering the quantum analogue of the techniques 
used earlier, generalizing the construction of Drinfeld--Jimbo quantum groups given by Lusztig
(cf.\ \cite[Chapter~1]{lusztig-book-94}). We will schematically described the proof below,
leaving the details to reader. 

\begin{itemize}[leftmargin=1cm]\itemsep0.5cm
	\item Let $\calH_{\pm}$ be the free associative algebras over $\hext{\C}$ with set of generators $\xzpm{\ia}$ and $\qxpm{\ia}$, $\ia\in\intsf(X)$.
	Then, the assigments $\varepsilon(\xz{\ia}^\pm)\coloneqq 0\coloneqq \varepsilon(\qxpm{\ia})$, $\Delta_\pm(\xz{\ia}^\pm) \coloneqq\xz{\ia}^\pm\ten1+1\ten\xz{\ia}^\pm$, and
	\begin{align*}
		\Delta_+(\qxp{\ia}) &\coloneqq \qxp{\ia}\otimes 1+ K_\ia \otimes \qxp{\ia}+\sum_{\ia=\ib\sgpp\ic}\, \ca{\ic}{, \,\ib\sgpp\ic} \, \qcs{\ib}{\ic}{-}\cdot q^{-1}(q-q^{-1})\, \qxp{\ib} K_\ic\otimes \qxp{\ic}\ ,\\[3pt]
		\Delta_-(\qxm{\ia}) &\coloneqq 1\otimes \qxm{\ia}+ \qxm{\ia}\otimes K_\ia^{-1}-\sum_{\ia=\ib\sgpp\ic}\, \ca{\ic}{, \,\ib\sgpp\ic} \, \qcs{\ib}{\ic}{-}\cdot (q-q^{-1})\, \qxm{\ic} \otimes \qxm{\ib} K_\ic^{-1}\ ,
	\end{align*}
	extend uniquely to two algebra maps $\Delta_{\pm}:\calH_{\pm}\to\calH_{\pm}\wh{\ten}\calH_{\pm}$ and $\varepsilon\colon \calH_{\pm}\to\hext{\C}$, 
	defining on $\calH_{\pm}$ a structure of topological bialgebra. 
	\item There exists a unique pairing of bialgebras $\rbf{\cdot}{\cdot}\colon\calH_+\ten\calH_-\to\C(\negthinspace(\hbar)\negthinspace)$ defined on the generators by 
	\begin{align*}
		\rbf{1}{1}\coloneqq 1\ , \quad 
		\rbf{\xzp{\ia}}{\xzm{\ib}} \coloneqq \frac{1}{\hbar}\rbf{\ia}{\ib}\ , \quad
		\rbf{\qxp{\ia}}{\qxm{\ib}}\coloneqq \frac{\delta_{\ia\ib}}{q-q^{-1}}\ ,
	\end{align*}
	where $q\coloneqq\exp(\hbar/2)$, and zero otherwise. In particular, $\rbf{\qxz{\ia}{}}{\qxz{\ib}{}}=q^{\rbf{\ia}{\ib}}$, where $\qxz{\ia}{}\coloneqq\exp{\hbar/2\cdot\xz{\ia}}$.
	\item Let $\calI_{\pm}$ be the ideal generated in $\calH_{\pm}$ by the elements
	\begin{align*}
		\xzpm{\ia\sgpp\ib}-\drc{\ia\sgpp\ib}\left(\xzpm{\ia}+\xzpm{\ib}\right)\ , \quad [\xzpm{\ia},\xzpm{\ib}]\ , \quad [\xzpm{\ia},\qxpm{\ib}]-\pm\rbf{\ia}{\ib}\qxpm{\ib}
	\end{align*}
	for any $\ia,\ib\in\intsf(X)$, and
	\begin{align*}
		\qxpm{\ia}\qxpm{\ib}-q^{\qcr{\ia}{\ib}{}}\cdot&\qxpm{\ib}\qxpm{\ia} \\&
		\pm\qcb{\ia}{\ib}{}\cdot q^{\qcs{\ia}{\ib}{\pm}}\cdot\qxpm{\ia\sgpp\ib}
		-\qcb{\ia}{\ib}{}\cdot(q-q^{-1})\cdot\qxpm{\iM{\ia}{\ib}}\qxpm{\im{\ia\, }{\,\ib}}
	\end{align*}
	for any $(\ia,\ib)\in\serre{X}$. Then, $\calI_{\pm}$ is a coideal and it is orthogonal to $\calH_{\mp}$.
	\item Set $\calB_{\pm}\coloneqq\calH_{\pm}/\calI_{\pm}$. Then, $(\calB_{+},\calB_-)$ form a matched pair of topological bialgebras. 
	Moreover, the quantum double relation (cf.\ Definition~\ref{def:cont-qg}-(2)) holds in the double cross product bialgebra $\D=\calB_+\dcs\calB_-/\sim$,
	where the quotient is obtained by identifying the two copies of the commutative subalgebra generated by the elements $\xzpm{\ia}$, $\ia\in\intsf(X)$.  
	In particular, there is a canonical algebra isomorphism $\DJ{\g_X}\simeq\D$.
	\item Finally one observes that, for any irreducible set $\calJ$, the map $\DJ{\g_{\calJ}^{\scsop{BKM}}}\to\DJ{\g_X}\simeq\D$ from Section~\ref{ss:q-colimit} 
	preserves the pairing. In particular, this implies that the pairing on $\D$, and therefore on $\DJ{\g_X}$ is non--degenerate. The result follows.
\end{itemize}

Moreover we get the following.

\begin{corollary}
	The morphism $\colim_{\calJ}\, \bfU_q\g_{\calJ}^{\scsop{BKM}}\to \bfU_q\g(X)$ from Corollary~\ref{cor:q-bkm-sgp} is an
	algebra isomorphism.
\end{corollary}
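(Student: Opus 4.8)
The morphism $\Psi\colon\colim_{\calJ}\,\bfU_q\g_{\calJ}^{\scsop{BKM}}\to\bfU_q\g_X$ of Corollary~\ref{cor:q-bkm-sgp} is already known to be surjective, so the plan is to prove injectivity, and this will be deduced from the non--degeneracy of the Hopf pairing on $\bfU_q\g_X$ established in Theorem~\ref{thm:cont-qg-bia}. First I would factor $\Psi$ as $\colim_{\calJ}\,\bfU_q\g_{\calJ}^{\scsop{BKM}}\xrightarrow{\colim_{\calJ}\Phi_{\calJ}}\colim_{\calJ}\,\bfU_q\g_{\calJ}\xrightarrow{\iota}\bfU_q\g_X$, where $\bfU_q\g_{\calJ}\subseteq\bfU_q\g_X$ is the subalgebra generated by the $\xz{\ia},\qxpm{\ia}$ with $\ia\in\calJ$, the map $\Phi_{\calJ}$ is as in Proposition~\ref{prop:q-bkm-sgp}, and $\iota$ is the canonical morphism. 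It then suffices to show that (a)~each $\Phi_{\calJ}$ is an isomorphism, and (b)~the transition maps of the system $\{\bfU_q\g_{\calJ}\}_{\calJ}$ are the inclusions among subalgebras of $\bfU_q\g_X$, so that $\iota$ is injective (indeed an isomorphism onto the directed union).

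For (a): $\Phi_{\calJ}$ is surjective by Proposition~\ref{prop:q-bkm-sgp}, so only injectivity is needed, and this is where the pairing enters. Since $\Phi_{\calJ}$ is a morphism of topological Hopf algebras carrying the Chevalley generators of $\bfU_q\g_{\calJ}^{\scsop{BKM}}$ to $\xz{\ia},\qxpm{\ia}$ with the Drinfeld--Jimbo normalisations, the standard non--degenerate Hopf pairing between the two Borel halves of $\bfU_q\g_{\calJ}^{\scsop{BKM}}$ and the pullback along $\Phi_{\calJ}$ of the Hopf pairing $\rbf{\cdot}{\cdot}$ on $\bfU_q\g_X$ from Theorem~\ref{thm:cont-qg-bia}(2) are two Hopf pairings on $\bfU_q\g_{\calJ}^{\scsop{BKM}}$ which agree on generators; by the uniqueness part of the Drinfeld--Jimbo construction they coincide, so $\Phi_{\calJ}$ intertwines the pairings. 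As the pairing on $\bfU_q\g_{\calJ}^{\scsop{BKM}}$ is non--degenerate, the restriction of $\ker\Phi_{\calJ}$ to either Borel half lies in the radical of that pairing, hence vanishes; together with the evident injectivity on the Cartan part (the set $\calJ$ being linearly independent in $\fun{X}$) and the triangular decomposition of $\bfU_q\g_{\calJ}^{\scsop{BKM}}$, which $\Phi_{\calJ}$ respects, this forces $\ker\Phi_{\calJ}=0$. Thus each $\Phi_{\calJ}$ is an isomorphism --- this is precisely the quantum counterpart of Proposition~\ref{prop:bkm-sgp}, now sharpening the surjection of Proposition~\ref{prop:q-bkm-sgp} to an isomorphism.

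For (b): once each $\bfU_q\g_{\calJ}^{\scsop{BKM}}$ is identified via $\Phi_{\calJ}$ with the subalgebra $\bfU_q\g_{\calJ}\subseteq\bfU_q\g_X$, the embedding $\phi'_{\calJ,\calJ'}$ (for $\calJ'\subseteq\calJ$) becomes the tautological inclusion $\bfU_q\g_{\calJ'}\subseteq\bfU_q\g_{\calJ}$, and $\phi''_{\calJ,\calJ'}$ (obtained by splitting $\ic=\ia\sgpp\ib$) likewise becomes an inclusion: by construction the image of $\qxpm{\ic}$ is the element forced by the quantum Serre relation of Definition~\ref{def:cont-qg}(3) solved for $\qxpm{\ic}$, hence already lies in $\bfU_q\g_{\calJ}$. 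Therefore $\{\bfU_q\g_{\calJ}\}_{\calJ}$ is a directed system of subalgebras of $\bfU_q\g_X$ with all transition maps inclusions, so $\colim_{\calJ}\,\bfU_q\g_{\calJ}=\bigcup_{\calJ}\bfU_q\g_{\calJ}\subseteq\bfU_q\g_X$ and $\iota$ is injective. Combining (a) and (b), $\Psi=\iota\circ(\colim_{\calJ}\Phi_{\calJ})$ is injective, hence --- being already surjective --- an isomorphism.

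The main obstacle is step (a): the injectivity of each $\Phi_{\calJ}$, which genuinely rests on the non--degeneracy of the continuum Hopf pairing from Theorem~\ref{thm:cont-qg-bia} (itself the deepest point of the paper, established by the Lusztig--type duality argument sketched there); everything else is formal bookkeeping about filtered colimits. As a consistency check, reducing $\Psi$ modulo $\hbar$ and using that $\bfU$ commutes with filtered colimits recovers, via Corollary~\ref{cor:bkm-sgp}(3), the identity isomorphism $\bfU\g_X\xrightarrow{\sim}\bfU\g_X$.
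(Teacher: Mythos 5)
Your proposal is correct and follows essentially the same route as the paper's (very terse) argument: the last step of the proof sketch of Theorem~\ref{thm:cont-qg-bia} is precisely that each $\Phi_{\calJ}$ preserves the pairing, so that non--degeneracy of the Lusztig pairing on $\bfU_q\g_{\calJ}^{\scsop{BKM}}$ forces injectivity of $\Phi_{\calJ}$, the colimit statement then being formal bookkeeping exactly as in your steps (a) and (b). One inaccuracy in your justification of (a): $\Phi_{\calJ}$ is \emph{not} a morphism of Hopf algebras, since $\Delta(\qxpm{\ia})$ in $\bfU_q\g_X$ runs over \emph{all} decompositions $\ia=\ib\sgpp\ic$ in $\intsf(X)$, almost all of which leave $\bfU_q\g_{\calJ}$; hence the pullback of $\rbf{\cdot}{\cdot}$ along $\Phi_{\calJ}$ is not a priori a Hopf pairing for the Drinfeld--Jimbo coproduct, and your appeal to uniqueness needs a supplement. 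The gap closes easily: for $\ia\in\calJ$ and a decomposition $\ia=\ib\sgpp\ic$ with $\ib,\ic$ proper subintervals, the function $\cf{\ib}$ is never a $\Z_{\geqslant0}$--combination of $\{\cf{\ia'}\}_{\ia'\in\calJ}$ (its restriction to $\ia$ is a non--constant multiple of $\cf{\ia}$), so the extra coproduct terms pair to zero against the Borel halves of $\bfU_q\g_{\calJ}$ for degree reasons, after which the two pairings satisfy the same recursion on generators and coincide.
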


%%  The bibliography


\begin{thebibliography}{DEHH18}
	\bibitem[AKSS19]{appel-kuwagaki-sala-schiffmann-18} 
	\textsc{A.~Appel, T.~Kuwagaki, F.~Sala, and O.~Schiffmann}, \emph{Continuum quantum groups,
		Hall algebras and persistence modules}, in preparation.
	
	\bibitem[ASS18]{appel-sala-schiffmann-18} 
	\textsc{A.~Appel, F.~Sala, and O.~Schiffmann}, \emph{Continuum {K}ac--{M}oody algebras},
	\href{https://arxiv.org/abs/1812.08528}{\sf arXiv:1812.08528}, 2018.
	
	\bibitem[ATL18]{appel-toledano-18} 
	\textsc{A.~Appel and V.~Toledano~Laredo}, \emph{A 2-categorical extension of
		{E}tingof-{K}azhdan quantisation}, Selecta Math. (N.S.) \textbf{24} (2018),
	no.~4, 3529--3617. \MR{3848027}
	
	\bibitem[ATL19a]{appel-toledano-16} 
	\textsc{A.~Appel and V.~Toledano~Laredo}, \emph{Coxeter categories and quantum groups}, Selecta Math. (N.S.) \textbf{25}
	(2019) no.~3, Paper No.~44, 97 pp. \MR{3984102}
	
	\bibitem[ATL19b]{appel-toledano-15} 
	\textsc{A.~Appel and V.~Toledano~Laredo}, \emph{Uniqueness of {C}oxeter structures on {K}ac--{M}oody algebras},
	Adv. Math. \textbf{347} (2019), 1--104. \MR{3915314}
	
	\bibitem[Bor88]{borcherds-88} 
	\textsc{R.~Borcherds}, \emph{Generalized {K}ac--{M}oody algebras}, J. Algebra
	\textbf{115} (1988), no.~2, 501--512.
	\MR{0943273}
	
	\bibitem[CP95]{chari-pressley} 
	\textsc{V.~Chari and A.~Pressley}, \emph{A guide to quantum groups}, Cambridge
	University Press, Cambridge, 1995, Corrected reprint of the 1994 original.
	\MR{1358358}
	
	\bibitem[Dri87]{drinfeld-quantum-groups-87} 
	\textsc{V.~G. Drinfeld}, \emph{Quantum groups}, Proceedings of the {I}nternational
	{C}ongress of {M}athematicians, {V}ol. 1, 2 ({B}erkeley, {C}alif., 1986),
	Amer. Math. Soc., Providence, RI, 1987, pp.~798--820.
	\MR{0934283}
	
	\bibitem[DEHH18]{dufresne_sampling} 
	\textsc{E.~Dufresne, P.~B. Edwards, H.~A. Harrington, and J.~D. Hauenstein},
	\emph{Sampling real algebraic varieties for topological data analysis},
	\href{https://arxiv.org/abs/1802.07716}{\sf arXiv:1802.07716}, 2018.
	
	\bibitem[FZ85]{feigin-zelevinsky-85} 
	\textsc{B.~L. Feigin and A.~V. Zelevinsky,} \emph{Representations of contragredient
		{L}ie algebras and the {K}ac-{M}acdonald identities}, Representations of
	{L}ie groups and {L}ie algebras ({B}udapest, 1971), Akad. Kiad\'o, Budapest,
	1985, pp.~25--77. \MR{0829045}
	
	\bibitem[GK81]{gabber-kac-81} 
	\textsc{O.~Gabber and V.~G. Kac}, \emph{On defining relations of certain
		infinite-dimensional {L}ie algebras}, Bull. Amer. Math. Soc. (N.S.)
	\textbf{5} (1981), no.~2, 185--189. \MR{0621889}
	
	\bibitem[Hal99]{halbout-99} 
	\textsc{G.~Halbout}, \emph{Construction par dualit\'{e} des alg\`ebres de {K}ac-{M}oody
		sym\'{e}trisables}, J. Algebra \textbf{222} (1999), no.~1, 65--81. 
	\MR{1728168}
	
	\bibitem[Jim85]{jimbo-85} 
	\textsc{M.~Jimbo,} \emph{A {$q$}-difference analogue of {$U({g})$} and the
		{Y}ang-{B}axter equation}, Lett. Math. Phys. \textbf{10} (1985), no.~1,
	63--69. \MR{0797001}
	
	\bibitem[Kac90]{kac-90} 
	\textsc{V.~G. Kac}, \emph{Infinite-dimensional {L}ie algebras}, third ed., Cambridge
	University Press, Cambridge, 1990. \MR{1104219}
	
	\bibitem[Kan95]{kang-95} 
	\textsc{S.-J. Kang}, \emph{Quantum deformations of generalized {K}ac-{M}oody algebras
		and their modules}, J. Algebra \textbf{175} (1995), no.~3, 1041--1066.
	\MR{1341758}
	
	\bibitem[Lus94]{lusztig-book-94}
	\textsc{G.~Lusztig}, \emph{Introduction to quantum groups}, Modern Birkh\"{a}user
	Classics, Birkh\"{a}user/Springer, New York, 2010, Reprint of the 1994
	edition. \MR{2759715}
	
	\bibitem[Maj95]{majid-book-95} 
	\textsc{S.~Majid}, \emph{Foundations of quantum group theory}, Cambridge University
	Press, Cambridge, 1995. \MR{1381692}
	
	\bibitem[MO19]{maulik-okounkov-12} 
	\textsc{D.~Maulik and A.~Okounkov}, \emph{Quantum groups and quantum cohomology},
	Ast\'{e}risque No. 408 (2019), ix+209 pp. \MR{3951025}
	
	\bibitem[SS19a]{sala-schiffmann-17} 
	\textsc{F.~Sala and O.~Schiffmann}, \emph{The circle quantum group and the infinite root
		stack of a curve.}  Selecta Math. (N.S.) \textbf{25} (2019), no. {5}, Paper No.~77, 86 pp.	
	\MR{4036503}
	
	\bibitem[SS19b]{sala-schiffmann-19} 
	\textsc{F.~Sala and O.~Schiffmann}, \emph{Fock space representation of the circle quantum group},
	International Mathematics Research Notices, {\sf rnz268}, 
	\href{https://doi.org/10.1093/imrn/rnz268}{\sf doi.org/10.1093/imrn/rnz268}.
	
\end{thebibliography}
\end{document}